\theoremstyle{plain}
\newtheorem{prop}{Proposition}
\newtheorem{thm}{Theorem}
\newtheorem{lem}{Lemma}
\newtheorem{cor}{Corollary}
\theoremstyle{definition}
\newtheorem{example}{Example}
\newtheorem{defn}{Definition}
\theoremstyle{remark}
\newtheorem{rem}{Remark}
\newcommand{\lie}[1]{\mathfrak{#1}}
\newcommand{\wh}[1]{\widehat{#1}}
\newcommand\bc{\mathbb C}
\newcommand\bz{\mathbb Z}
\newcommand\br{\mathbb R}
\newcommand{\ep}{\epsilon}
\newcommand{\cmark}{\ding{51}}%
\newcommand{\xmark}{\ding{55}}%
\def\a{\alpha}
\newcounter{cnt}
\def\mydggeometry{\makeatletter\dg@YGRID=1\dg@XGRID=20\unitlength=0.003pt\makeatother}
\makeatother \theoremstyle{remark}
\numberwithin{equation}{section}
\def\section{\def\@secnumfont{\mdseries}\@startsection{section}{1}%
  \z@{.7\linespacing\@plus\linespacing}{.5\linespacing}%
  {\normalfont\scshape\centering}}
\def\subsection{\def\@secnumfont{\bfseries}\@startsection{subsection}{2}%
  {\parindent}{.5\linespacing\@plus.7\linespacing}{-.5em}%
  {\normalfont\bfseries}}
\begin{document}

\title[]{On symmetric closed subsets of real affine root systems}
\author{Dipnit Biswas}\address{Department of Mathematics, Indian Institute of Science, Bangalore 560012}
\email{dipnitbiswas@iisc.ac.in}
\thanks{}

\author{Irfan Habib}\address{Department of Mathematics, Indian Institute of Science, Bangalore 560012}
\email{irfanhabib@iisc.ac.in}
\thanks{}

\author{R. Venkatesh}
\address{Department of Mathematics, Indian Institute of Science, Bangalore 560012}
\email{rvenkat@iisc.ac.in}
\thanks{}

\subjclass[2010]{}
\begin{abstract}

Any symmetric closed subset of a finite crystallographic root system must be a closed subroot system. This is not, in general, true for real affine root systems. In this paper, we determine when this is true and also give a very explicit description of symmetric closed subsets of real affine root systems. At the end, using our results, we study the correspondence between symmetric closed subsets of real affine root systems and the regular subalgebras generated by them.

\end{abstract}

\maketitle

\section{Introduction}

One of the main motivations to study closed subsets of a given finite crystallographic root system is that they are closely related to the regular subalgebras of the corresponding semi-simple Lie algebra (see \cite{Dynkin, DG20}).
They also appear in various other contexts. For example, they appear in 
\begin{itemize}
    \item the classification of maximal closed connected subgroups of maximal rank of a connected compact Lie group \cite{BdS}
    \item the theory of abelian/ad-nilpotent ideals of Borel subalgebras \cite{Suter, Papi}
   \item the theory Chevalley groups \cite{Harebov}
   \item the classification of reflection subgroups of finite and affine Weyl groups \cite{DL11}.
  \end{itemize}
Various special classes of closed subsets of a finite root system were classified by many authors. For example,
\begin{itemize}
    \item A. Borel and J. De Siebenthal classified maximal closed subroot systems of finite root systems in \cite{BdS}
    \item E. B. Dynkin came up with an algorithm to classify all subroot systems (upto the Weyl group conjugacy) of a finite root system in \cite{Dynkin} 
    \item  Z. I. Borevi{\v{c}} used topological methods in \cite{Bo97} to determine all closed subsets of root systems of type $A_n, 1\le n\le 8$, and this work was subsequently taken forward by others, see for e.g. \cite{BM02}
    \item invertible closed subsets were classified in \cite{DCH94}
    \item recently, A. Douglas and W.A. de Graaf have given an algorithm for classifying all closed subsets of a finite root system, up to conjugation by the associated Weyl group, in \cite{DG20}.
\end{itemize}

The problem of classifying closed subsets of (real) affine root systems is wide open. Again some very particular classes were studied in this setting. 
The invertible closed subsets of affine root systems were classified in \cite{CKS98} and the parabolic subsets were classified in \cite{Futorny}. 
Anna Felikson et al. started the classification regular subalgebras of affine Kac-Moody algebras (which are related to the closed subroot systems of real affine root systems) in \cite{FRT08} and it was completed in \cite{RV19}, see also \cite{DV21}.
The classification of the reflection subgroups of finite and affine Weyl groups has been achieved in \cite{DL11} by classifying 
subroot systems finite and real affine root systems. 
It is well-known that the classification of all subroot systems may be deduced from that of the closed subroot systems in the finite setting, see \cite{Carter72}. A combinatorial description of biclosed sets of real affine root systems has been given very recently in \cite{BS22}. 

We are mainly interested in the closed subsets of real affine root systems in this paper, as they are closely related to the Cartan invariant subalgebras of corresponding affine Kac-Moody algebras and this will be discussed elsewhere.
Suppose $\Psi$ is a closed subset of a real affine root system $\Phi$, i.e., if $\alpha, \beta\in \Psi$ and $\alpha+\beta\in \Phi$ implies that $\alpha+\beta\in \Psi$, then it is easy see that $\Psi$ can be written as a union of its symmetric part $\Psi^r=\Psi\cap -\Psi$ and special part $\Psi^s=\Psi\backslash \Psi^r.$
Both symmetric and special parts of $\Psi$ are closed in $\Phi$. So the classification of closed subsets of $\Phi$ reduces to the classification of 
symmetric and special closed subsets of $\Phi$. In general the problem of classifying special closed subsets is very hard and it will be discussed in \cite{BIV23}.
In this paper we only focus on symmetric closed subsets of real affine root systems.	The main motivation for this work comes from the following two assertions about the finite root system:
	\begin{itemize}
	    \item each symmetric closed subset of a finite root system must be a closed subroot system and
	    \item there is a one to one correspondence between closed subsets of a finite root system and the Cartan invariant subalgebras of corresponding semi-simple Lie algebra (see \cite[Proposition 4.1]{DG20} for the precise statement).
\end{itemize}
Both these statements are not true in general for real affine root systems and this naturally motivates us to ask the following questions:
\begin{enumerate}
	    \item When a given symmetric closed subset of a real affine root system is a closed subroot system?
\item Is it possible to classify all symmetric closed subsets of a real affine root system?
\item  Consider the map $\Psi\mapsto \lie g(\Psi)$ where $\Psi$ is a symmetric closed subset of a real affine root system and  $\lie g(\Psi)$ is the subalgebra generated by $\lie g_\alpha, \,\alpha\in \Psi.$ We will see that this map is not injective for any affine Kac-Moody algebras (see Section \ref{regularsect}). Is it possible to determine the preimage of given $\lie g(\Psi)$?
\end{enumerate} 

We will address all these questions in this paper. The paper is organized as follows: we recall the definitions and set up all the notations in Section \ref{prem}.
The symmetric closed subsets of real affine root systems whose gradient is closed are studied in Section \ref{closedsect}, and
the symmetric closed subsets of real affine root systems whose gradient is semi-closed are studied in Section \ref{semiclosedsect}.
The correspondence between symmetric closed subsets of real affine root systems and regular subalgebras of corresponding affine Lie algebras is discussed in Section \ref{regularsect}.
We summarize all our results at the end, in Section \ref{summarysect}.

\section{Preliminaries}\label{prem}

Throughout this paper, we denote by $\mathbb{C}$ (resp. $\mathbb{R}$), the field of complex numbers (resp. real numbers) and by $\bz$ (resp. $\bz_+$), the set of integers (resp. non-negative integers).

\subsection{}\label{section21}Let $E$ be an Euclidean space over $\mathbb{R}$ endowed with a positive definite symmetric bilinear form  $(\cdot,\cdot)$. A finite (crystallographic) root system
 $\mathring{\Phi}$ is a finite subset of $E$ satisfying the following properties (see \cite[Chapter VI]{Bou46} or \cite[Section 9.2]{Hu80}):
$$0\notin \mathring{\Phi},\ \ \text{Span}_{\mathbb{R}}\mathring{\Phi}=E,\ \ s_{\alpha}(\mathring{\Phi})=\mathring{\Phi},\ \forall\, \alpha\in\mathring{\Phi},\ \ (\beta,\alpha^{\vee})\in\bz,\ \forall\, \alpha,\beta \in \mathring{\Phi},$$
where $\alpha^{\vee}:=2\alpha/(\alpha,\alpha)$ and $s_{\alpha}$ is the reflection acting on $E$ defined by $s_{\alpha}(x)=x-(x,\alpha^{\vee})\alpha$, $x\in E$. For the rest of this paper, we denote $\mathring{\Phi}$ by a finite root system in $E$. In addition, if $\mathring{\Phi}$ satisfies $\br\alpha\cap \mathring{\Phi}=\{\pm \alpha\}$ for $\alpha\in \mathring{\Phi}$, then we call $\mathring{\Phi}$ reduced. Moreover, we call any subset $\Psi\subseteq \mathring{\Phi}$ irreducible whenever $\Psi=\Psi'\cup \Psi''$ with $(\Psi',\Psi'')=0$ implies $\Psi'=\emptyset$ or $\Psi''=\emptyset$. Any root system can be written as a direct sum of irreducible root systems (see \cite[Chapter VI]{Bou46}) and the reduced irreducible root systems were classified in terms of their Dynkin diagrams (see \cite[Theorem 11.4]{Hu80}). 
They are the classical types $A_n\, (n\geq 1), B_n\, (n\geq 2), C_n\, (n\geq 3), D_n\, (n\geq 4)$ and the exceptional types $E_{6,7,8}, F_4$ and $G_2$. For a direct construction of these root systems we refer to \cite[Section 12.1]{Hu80}. Moreover, there is only one non--reduced irreducible root system of rank $n$, namely 
\begin{equation*}BC_n=B_n\cup C_n=\{\pm\epsilon_i: 1\leq i\leq n\}\cup\{\pm\epsilon_i\pm \epsilon_j: 1\leq i\neq j\leq n\}\cup\{\pm 2\epsilon_i: 1\leq i\leq n\},\end{equation*}
where $\epsilon_1,\dots,\epsilon_n$ denotes an orthonormal basis of $E$ with respect to $(\cdot,\cdot)$.
Recall that a subset $\mathring{\Psi}$ of $\mathring{\Phi}$ is said to be symmetric if $\mathring{\Psi}=-\mathring{\Psi}$, and it is called 
closed in $\mathring{\Phi}$ if $\alpha, \beta\in \mathring{\Psi}$ and $\alpha+\beta\in \mathring{\Phi}$ implies $\alpha+\beta\in \mathring{\Psi}$.
We record the following simple and important fact on finite root systems, and we include a proof for the reader's convenience.
	\begin{lem}\label{keylem:finite}
	Let $\mathring{\Psi}$ be a symmetric closed subset of $\mathring{\Phi}$, then $\mathring{\Psi}$ is a closed subroot system of $\mathring{\Phi}$.
	\end{lem}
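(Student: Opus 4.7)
To show $\mathring{\Psi}$ is a subroot system, I need $s_\alpha(\beta) \in \mathring{\Psi}$ for every pair $\alpha, \beta \in \mathring{\Psi}$; since $\mathring{\Psi} \subseteq \mathring{\Phi}$ automatically inherits the integrality condition $(\beta,\alpha^\vee)\in\bz$, this is the only nontrivial axiom to verify. My approach is to write $s_\alpha(\beta) = \beta - k\alpha$ where $k = (\beta, \alpha^\vee) \in \bz$, and then produce $s_\alpha(\beta)$ as the end of a chain of elements of $\mathring{\Psi}$ obtained by iteratively adding $\pm\alpha$, using only (i) the symmetry of $\mathring{\Psi}$, (ii) its closedness, and (iii) the fact that $\alpha$-strings through roots are unbroken in $\mathring{\Phi}$.

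\textbf{First step: dispose of the degenerate cases.} If $\beta \in \br\alpha$, then in $\mathring{\Phi}$ (possibly of type $BC_n$) we have $\beta \in \{\pm\alpha, \pm 2\alpha\}$ or $\beta \in \{\pm\alpha, \pm\tfrac{1}{2}\alpha\}$, and in each subcase $s_\alpha(\beta) = -\beta$, which lies in $\mathring{\Psi}$ by symmetry. So I may assume $\beta$ and $\alpha$ are linearly independent. I may also assume $k \ne 0$, since otherwise $s_\alpha(\beta)=\beta\in\mathring{\Psi}$.

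\textbf{Main step: walk along the root string.} Consider the $\alpha$-string through $\beta$, i.e.\ the set of $n \in \bz$ such that $\beta + n\alpha \in \mathring{\Phi}$. This is a contiguous interval $[-p,q]$ with $p - q = k$ (standard property of finite crystallographic root systems, \cite[Section 9.4]{Hu80}). Suppose first that $k > 0$; then $p \geq k$, so $\beta - \alpha, \beta - 2\alpha, \ldots, \beta - k\alpha$ all lie in $\mathring{\Phi}$. Since $-\alpha \in \mathring{\Psi}$ by symmetry, closedness of $\mathring{\Psi}$ applied to $\beta$ and $-\alpha$ gives $\beta - \alpha \in \mathring{\Psi}$; iterating, $\beta - j\alpha \in \mathring{\Psi}$ for each $1 \le j \le k$, so in particular $s_\alpha(\beta) = \beta - k\alpha \in \mathring{\Psi}$. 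The case $k < 0$ is symmetric: now $q \ge |k|$, each of $\beta + \alpha,\ldots,\beta+|k|\alpha$ lies in $\mathring{\Phi}$, and successive applications of closedness using $\alpha \in \mathring{\Psi}$ place $s_\alpha(\beta) = \beta+|k|\alpha$ in $\mathring{\Psi}$.

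\textbf{What could go wrong.} The only place the argument is subtle is the invocation of the unbroken-string property: I am using that once $\beta\in\mathring\Phi$ and $(\beta,\alpha^\vee) = k > 0$, all intermediate $\beta-j\alpha$ ($1\le j\le k$) are again roots. This is where crystallographicity is essential, but it is standard and has already been recorded in the preliminary setup, so no additional work is needed. Everything else is a short induction on $|k|$, and closure of $\mathring{\Psi}$ under $s_\alpha$ for all $\alpha \in \mathring{\Psi}$ is exactly the definition of a subroot system.
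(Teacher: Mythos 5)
Your proof is correct and follows essentially the same route as the paper: both arguments reduce to the unbroken $\alpha$-string through $\beta$ and then walk along it using closedness, with symmetry of $\mathring{\Psi}$ handling the sign of $(\beta,\alpha^\vee)$. Your explicit treatment of the proportional case $\beta\in\br\alpha$ (relevant only for $BC_n$) is a small extra care the paper leaves implicit, but it does not change the substance of the argument.
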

		\begin{proof}
	Let $\alpha,\beta\in \mathring{\Psi}$. Suppose $(\beta,\alpha^\vee)=0$, then $s_\alpha(\beta)=\beta\in \mathring{\Psi}$. So assume that $(\beta,\alpha^\vee)\neq 0$. 
	If $(\beta,\alpha^\vee)< 0$, then $\beta,\beta+\alpha,\ldots,\beta+(-(\beta,\alpha^\vee))\alpha$ are elements of $\mathring{\Phi}$ by \cite[Proposition 8.4, Page 89]{Hu80}. Since $\mathring{\Psi}$ is closed, we have $\beta,\beta+\alpha,\ldots,\beta+(-(\beta,\alpha^\vee))\alpha\in \mathring{\Psi}$ which implies  $s_\alpha(\beta)=\beta-(\beta,\alpha^\vee)\alpha\in \mathring{\Psi}$.
Since $\mathring{\Psi}$ is symmetric, the case $(\beta,\alpha^\vee)> 0$ can be done similarly.
\end{proof}

\subsection{}
The Weyl group $W$ of a finite root system $\mathring{\Phi}$ is defined to be the subgroup of $GL(E)$ generated by $s_{\alpha},\alpha\in\mathring{\Phi}$. At most two root lengths occur in any  reduced irreducible finite root system $\mathring{\Phi}$  and all roots of a given length are conjugate under the Weyl group $W$ of $\mathring{\Phi}$ (see for example \cite[Section 10.4]{Hu80}). We denote the set of short roots (resp. long roots) by $\mathring{\Phi}_s$ (resp. $\mathring{\Phi}_\ell$) and if there is only one root length then we say that every root is short by convention. If $\mathring{\Phi}$ is non--reduced irreducible finite root system, we define:
$$\mathring{\Phi}_s=\{\pm\epsilon_i,\ 1\leq i\leq n\},\ \ \mathring{\Phi}_\ell=\{\pm\epsilon_i\pm\epsilon_j: 1\leq i\neq j\leq n\},\ \ \mathring{\Phi}_d=\{\pm2\epsilon_i,\ 1\leq i\leq n\}.$$
Note that $\mathring{\Phi}_d=\{\alpha\in \mathring{\Phi}: \alpha/2\in \mathring{\Phi}\}$ is  the set of divisible roots in $BC_n$ and define the non--divisible roots of $BC_n$ by $\mathring{\Phi}_{nd}=\mathring{\Phi}_{}\backslash \mathring{\Phi}_{d}$. Further, set by $\mathring{\Pi}$ a base of $\mathring{\Phi}$ and set
$$ m_{\mathring{\Phi}}=\begin{cases}
1, & \text{$\mathring{\Phi}$ is of type $A_n$, $D_n$ or $E_n$}\\
2, & \text{$\mathring{\Phi}$ is of type $B_n$, $C_n$ or $F_4$}\\
3, & \text{$\mathring{\Phi}$ is of type $G_2$}\\
\end{cases}
$$
We often use $m$ instead of $m_{\mathring{\Phi}}$ if the underlying $\mathring{\Phi}$ is understood.
We end with the following fact (see \cite[Proposition 8.17]{Carter}).
\begin{lem}\label{longroot}
    Let $\mathring{\Phi}$ be a finite irreducible root system.  Let $\beta\in \mathring{\Phi}$ and write $\beta=\sum_{\alpha\in \mathring{\Pi}}k_\alpha \alpha$. Then $\beta$ is a long root if and only if $m$ divides $k_\alpha$ for each short root $\alpha\in \mathring{\Pi}$. \qed
\end{lem}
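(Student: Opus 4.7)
The plan is to introduce the sublattice
\[ L := \Bigl\{ \sum_{\alpha \in \mathring{\Pi}} k_\alpha \alpha \;\Big|\; k_\alpha \in \bz \text{ and } m \mid k_\alpha \text{ for every short } \alpha \in \mathring{\Pi} \Bigr\} \]
of the root lattice of $\mathring{\Phi}$, and to prove $L \cap \mathring{\Phi} = \mathring{\Phi}_\ell$; this is exactly the assertion of the lemma. When $\mathring{\Phi}$ is simply-laced ($A_n, D_n, E_{6,7,8}$) we have $m = 1$ and the claim is vacuous, so I may restrict to $B_n, C_n, F_4, G_2$, where $m \in \{2,3\}$.

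The main step is to check that $L$ is stable under the Weyl group $W$. Since $W$ is generated by simple reflections, it suffices to verify $s_\alpha(L) \subseteq L$ for each $\alpha \in \mathring{\Pi}$. If $\alpha$ is long, then $\alpha \in L$ and $s_\alpha(\gamma) - \gamma = -(\gamma, \alpha^\vee)\alpha \in \bz\alpha \subseteq L$, so $s_\alpha$ preserves $L$. If $\alpha$ is short, I need $(\gamma, \alpha^\vee) \in m\bz$ for each generator $\gamma$ of $L$, for then $(\gamma, \alpha^\vee)\alpha \in \bz \cdot (m\alpha) \subseteq L$. The key combinatorial input is the following fact, to be read off the Cartan matrices of $B_n, C_n, F_4, G_2$: the Cartan integer from a long simple root to an adjacent short simple root is exactly $-m$, while the Cartan integer between two simple roots of equal length is $0$ or $-1$. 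Granting this, for $\gamma$ a long simple root one has $(\gamma, \alpha^\vee) \in \{0,-m\} \subseteq m\bz$, and for $\gamma = m\alpha'$ with $\alpha'$ short simple $(m\alpha', \alpha^\vee) = m(\alpha',\alpha^\vee) \in m\bz$ automatically; hence $L$ is $W$-invariant.

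Finally, I would conclude using the standard fact that $W$ acts transitively on $\mathring{\Phi}_\ell$ and on $\mathring{\Phi}_s$ separately. The inclusion $\mathring{\Phi}_\ell \subseteq L$ is immediate: every long simple root is a generator of $L$, and $W$-invariance propagates this to the full orbit. For the reverse inclusion, suppose some short root $\beta$ were in $L$; choosing $w \in W$ with $w\beta$ a short simple root $\alpha^*$, by $W$-invariance $\alpha^* \in L$, but the expansion of $\alpha^*$ in $\mathring{\Pi}$ has coefficient $1$ at itself, which is not divisible by $m \geq 2$, a contradiction. The only nontrivial ingredient in the proof is the case-by-case Cartan integer observation underlying Step 1; everything else is purely formal from $W$-equivariance.
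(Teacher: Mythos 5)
Your proof is correct, and all the ingredients you invoke (the Cartan integer $\langle\gamma,\alpha^\vee\rangle=-m$ for a long simple root $\gamma$ adjacent to a short simple root $\alpha$, and the transitivity of $W$ on roots of a fixed length) are standard and accurately stated. Note, however, that the paper does not prove this lemma at all: it is quoted from Carter with a \qed, so there is no in-paper argument to compare against. Your route --- showing that the sublattice $L$ spanned by the long simple roots together with $m\alpha$ for short simple $\alpha$ is $W$-stable, so that $L\cap\mathring{\Phi}$ is a union of $W$-orbits, and then testing membership on one simple root of each length --- is the standard self-contained proof, and it is complete. Two small points of hygiene. First, under the paper's convention that every root of a simply-laced system is \emph{short}, the ``if'' direction is, read literally, false when $m=1$ (every coefficient is divisible by $1$ yet no root is long); so rather than ``vacuous'' you should say the lemma is only meaningful, and only applied, when $\mathring{\Phi}$ has two root lengths. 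Second, the paper's ``finite irreducible root system'' officially includes the non-reduced $BC_n$, for which $m$ is not even defined; your argument, like the statement itself, tacitly assumes $\mathring{\Phi}$ is reduced, which is the only setting in which the lemma is used.
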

\medskip
\subsection{}\label{phi}
Let $\mathring{\Phi}$ be an irreducible reduced finite root system. Let $\mathring{\lie g}$ be a finite--dimensional semi-simple Lie algebra over $\mathbb{C}$ and $\mathring{\lie h}$ a Cartan subalgebra of $\mathring{\lie g}$ such that the root system corresponding to the pair $(\mathring{\lie g}, \mathring{\lie h})$ is
  $\mathring{\Phi}$. Let $\sigma$ be a Dynkin diagram automorphism 
of $\mathring{\lie h}$ with respect to $\mathring{\Phi}$ and denote $m$ by the order of $\sigma$. We know that $m\in\{1,2,3\}$. Let $\xi$ be a primitive $m$--th root of unity. We have
$$\mathring{\lie g}=\bigoplus_{j\in \bz/m\bz} \lie g_j,\ \  \lie g_{j}=\{x\in \lie g: \sigma(x)=\xi^j x\}.$$
It is known that $\lie g_0$ is again a finite-dimensional simple Lie algebra over $\mathbb{C}$  with a Cartan subalgebra $\lie h_0=\mathring{\lie h}\cap \lie g_0$. Moreover, $\lie g_j$ is a $\lie g_0$--module and we denote the set of non--zero weights of $\lie g_j$ with respect to $\lie h_0$ by $\mathring{\Phi}_j$.
Then we have $$\mathring{\Phi}:=\mathring{\Phi}_0 \cup\cdots\cup \mathring{\Phi}_{m-1}.$$
The types of $\mathring{\Phi},\mathring{\Phi}_0, \ldots, \mathring{\Phi}_{m-1}$ can be extracted from \cite[Section 7.8, 7.9, 8.3]{K90}. 
The corresponding affine Kac--Moody algebra $\lie g=\widehat{\mathcal{L}}(\mathring{\lie g},\sigma)$ is defined by
$$\lie g=\mathcal{L}(\mathring{\lie g},\sigma)\oplus \bc c\oplus \bc d,\ \ \mathcal{L}(\mathring{\lie g},\sigma)=\bigoplus_{j\in \bz/m\bz} \lie g_{j}\otimes \bc[t^{\pm m}]t^{j},$$ where $\mathcal{L}(\mathring{\lie g},\sigma)$ is called the loop algebra, $\mathcal{L}(\mathring{\lie g},\sigma)\oplus \bc c$ is the universal central extension of the loop algebra and $d=t\frac{d}{dt}$ is the degree derivation. For more details we refer the reader to \cite[Section 7,8]{K90}. The set of roots of $\lie g$ with respect to the Cartan subalgebra $\lie h=\lie h_0\oplus \bc c\oplus \bc d$ is exactly $\Delta\backslash\{0\}$, where
$$\Delta:=\{\alpha+ r\delta: \alpha\in \mathring{\Phi}_j\cup\{0\},\ \ r\in j+m\bz ,\ 0\leq j< m\},$$
Denote $\Phi$ (resp. $\Phi^{\text{im}}$) by the set of real (resp. imaginary) roots of $\lie g$. Then we have $$\Phi=\bigcup_{\alpha\in \mathring{\Phi}}\big(\alpha+ \Lambda_\alpha\big),\ \Phi^{\text{im}}=\bz\delta.$$
where $\Lambda_\a=\Lambda_\beta$ if $\alpha$ and $\beta$ have same length and common $\Lambda_s, \Lambda_\ell, \Lambda_d$ can be found in the following table for each case.
\begin{equation*} \label{n:table1} {\renewcommand{\arraystretch}{1.5}
\begin{array}{|c |c| c| c| c | c| c| c|} \hline

 (\lie g, m) &  \mathring{\Phi}_{0} & \mathring{\Phi}_{1} & \mathring{\Phi}_{2} &\mathring{\Phi} & \Lambda_s &\Lambda_\ell & \Lambda_d \\ \hline

( \lie g(\mathring{\Phi}), 1) &  \mathring{\Phi} & / & / & \mathring{\Phi} & \bz & \bz  & / \\ \hline
  (A_{2n}, 2) &  B_n
    &  \mathring{\Phi}_0 \cup \{\pm 2\epsilon_i : 1 \le i \le n \} & / & BC_n &\frac{1}{2}+\bz& \bz  & 2\bz  \\ \hline
 (A_{2n-1}, 2)   & C_n & (C_{n})_ {s} & / &C_n & \bz & 2\bz  & / \\ \hline
(D_{n+1}, 2) &  B_n & (B_{n})_ {s} & /& B_n & \bz & 2\bz  & / \\
\hline
 (E_6,2) & F_4 & (F_{4})_ {s}  &/ & F_4 & \bz & 2\bz  & / \\ \hline
 (D_4, 3) &  G_2 & (G_{2})_ {s}  & (G_{2})_ {s} & G_2 & \bz & 3\bz  & / \\ \hline
\end{array}}
\end{equation*}

\medskip

We end this section with the following definitions.

\begin{defn}
Let $\Psi$ be a subset of $\Delta$ (resp. $\Phi$). Set $\Psi_{\text{re}}:=\Psi\cap\Phi\  \text{and} \ \Psi_{\text{im}}:=\{r\in\mathbb{Z}:r\delta\in\Psi\}\cup \{0\}.$
\begin{enumerate}
    \item We say $\Psi$ is \textit{symmetric} if $\Psi=-\Psi$, where $-\Psi=\{-\alpha : \alpha\in \Psi\}$.
    \item We say $\Psi$ is \textit{real closed} or \textit{closed} in $\Phi$ if
    \begin{enumerate}
        \item  $\Psi$ is a non-empty subset of $\Phi$
        \item    if $\alpha, \beta\in \Psi$ such that $\alpha+\beta\in \Phi$, then we have $\alpha+\beta\in \Psi$.
    \end{enumerate}
    \item We say $\Psi$ is a \textit{subroot system} if for any $\alpha\in \Psi_{\text{re}}$, $\beta\in \Psi$ we have $s_\alpha(\beta)\in \Psi.$
\end{enumerate}
\end{defn}

\subsection{}
Let $\Phi$ be a real irreducible affine root system as in \Cref{phi} and let $\Psi$ be a closed subset of $\Phi$.
The symmetric part of $\Psi$ defined to be $\Psi^r:=\{\alpha\in\Psi\mid -\alpha\in \Psi\}$ and the special part of $\Psi$ defined to be $\Psi^s:=\{\alpha\in \Psi\mid -\alpha \notin \Psi\}$. It is clear that $\Psi^r$ is a symmetric closed subset of $\Phi$ and $\Psi^s$ is a closed subset of $\Phi$, and we have
$$\Psi=\Psi^r\sqcup \Psi^s.$$ 
The gradient of $\Psi$ is $Gr(\Psi)\,:=\{\alpha\in \mathring{\Phi} \mid \alpha+k\delta\,\in \Psi $ for some $k\in \bz\}$.
	For given $\alpha \in Gr(\Psi)$, we define $Z_\alpha(\Psi)\,:=\{k\in\bz \mid \alpha+k\delta \in \Psi\}$.
We will simply use $Z_{\alpha}$ for $Z_\alpha(\Psi)$ if the dependence of the underlying $\Psi$ is understood.
Clearly we have $$\Psi =\bigcup_{\alpha\in Gr(\Psi)}\{\alpha+k\delta \mid k\in Z_{\alpha}\}.$$
For a given $(p_\alpha)_{\alpha\in Gr(\Psi)}$, where $p_\alpha\in Z_\alpha$, define
$Z_{\alpha}':=Z_\alpha-p_\alpha$ for $\alpha\in Gr(\Psi)$. We often make specific choices of $(p_\alpha)_{\alpha\in Gr(\Psi)}$ and make sure that the map $\alpha\mapsto p_\alpha$ gives us a $\mathbb{Z}$-linear function from $p: Gr(\Psi)\to \mathbb{Z}.$ For example, we have (see \cite[Lemma 13]{DL11a} and \cite[Lemma 2.1.1]{RV19}):
\begin{lem}\label{existp}
Let $\Psi$ be a symmetric closed subset of $\Phi$. Suppose
$Gr(\Psi)$ is a closed, reduced subroot system with a base  $B$ and assume that we have 
\begin{equation}\label{eq:z}
    Z_\alpha+Z_\beta\subseteq Z_{\alpha+\beta}\ \ \text{for all}\ 
    \ (\alpha,\beta,\alpha+\beta)\in Gr(\Psi)^{\times 3}.
\end{equation}
Choose $p_\alpha\in Z_\alpha$ arbitrarily for $\alpha\in B$ and extend the map
$\alpha\mapsto p_\alpha, \alpha\in B$ to $Gr(\Psi)$ $\mathbb{Z}$-linearly. Then we have $p_\alpha\in Z_\alpha$, for all $\alpha\in Gr(\Psi)$.
\end{lem}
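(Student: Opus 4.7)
The plan is to prove the statement by strong induction on the height $\mathrm{ht}(\alpha) = \sum_{\beta \in B} n_\beta$ of a positive root $\alpha = \sum_{\beta \in B} n_\beta \beta$ of $Gr(\Psi)$, and then to deduce the claim for negative roots using the symmetry of $\Psi$.

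First I would set up the base of the induction. For $\alpha \in B$ we have $p_\alpha \in Z_\alpha$ by the choice made in the hypothesis, so the conclusion holds for all positive roots of height one. Now suppose $\alpha \in Gr(\Psi)$ is a positive root of height at least two. Since $Gr(\Psi)$ is a reduced crystallographic root system with base $B$, the standard fact \cite[Lemma 10.2A]{Hu80} applied in each irreducible component of $Gr(\Psi)$ provides a simple root $\beta \in B$ such that $\gamma := \alpha - \beta \in Gr(\Psi)$ is again a positive root, necessarily of strictly smaller height. By the inductive hypothesis $p_\gamma \in Z_\gamma$, and $p_\beta \in Z_\beta$ by construction. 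The assumption \eqref{eq:z} then yields
\[
p_\alpha \;=\; p_\gamma + p_\beta \;\in\; Z_\gamma + Z_\beta \;\subseteq\; Z_{\gamma+\beta} \;=\; Z_\alpha,
\]
where the first equality uses the $\mathbb{Z}$-linearity of the extension $\alpha \mapsto p_\alpha$. This closes the induction on positive roots.

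For a negative root $\alpha$ of $Gr(\Psi)$, note that $-\alpha$ is positive and so, by the previous step, $p_{-\alpha} \in Z_{-\alpha}$; equivalently $-\alpha + p_{-\alpha}\delta \in \Psi$. Since $\Psi$ is symmetric, $\alpha - p_{-\alpha}\delta \in \Psi$, which says $-p_{-\alpha} \in Z_\alpha$. But $\mathbb{Z}$-linearity of $p$ gives $p_\alpha = -p_{-\alpha}$, whence $p_\alpha \in Z_\alpha$ as required.

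The only nontrivial ingredient is the existence, in the inductive step, of a simple root $\beta \in B$ with $\alpha - \beta \in Gr(\Psi)$; everything else is bookkeeping that combines the closure property \eqref{eq:z}, the $\mathbb{Z}$-linearity of the extension, and the symmetry of $\Psi$. I do not anticipate any serious obstacle beyond citing this standard root-system lemma correctly, since the hypothesis that $Gr(\Psi)$ is itself a reduced subroot system with base $B$ makes the classical argument directly applicable.
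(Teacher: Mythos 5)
Your proposal is correct and follows essentially the same route as the paper: the paper writes a positive root as $\alpha=\alpha_1+\cdots+\alpha_r$ with every partial sum a root (the same standard fact you invoke, packaged as an explicit decomposition rather than an induction on height), applies \eqref{eq:z} to each partial sum, and reduces negative roots to positive ones via $p_{-\alpha}=-p_\alpha$ and $Z_{-\alpha}=-Z_\alpha$. No gaps.
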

\begin{proof}
Define a $\bz$-linear function $p: Gr(\Psi)\to \bz$ extending  $\alpha\mapsto p_\alpha, \alpha\in B$. 
We claim that $p_\alpha\in Z_\alpha$ for all $\alpha\in Gr(\Psi).$ Since $p_{-\alpha}=-p_{\alpha}$ and $Z_{-\alpha}=-Z_\alpha$, it is enough to prove that $p_\alpha\in Z_\alpha$ for all positive roots $\alpha\in Gr(\Psi).$ Let $\alpha\in Gr(\Psi)$ be a positive root, then we can write $\alpha=\alpha_1+\cdots +\alpha_r$ with
each partial sum $\alpha_1+\cdots+\alpha_i$ is again a root in $Gr(\Psi)$ for $1\le i\le r.$ Since $\Psi$ is closed and using the condition \eqref{eq:z}, we have
$\alpha_1+\cdots+\alpha_i+(p_{\alpha_1}+\cdots +p_{\alpha_i})\delta\in \Psi$ for  $1\le i\le r.$ In particular, we have $p_\alpha=p_{\alpha_1}+\cdots +p_{\alpha_r}\in Z_\alpha.$
This completes the proof.
\end{proof}

\subsection{} We collect here some basic facts in this subsection. Let $\Psi$ be a subset of $\Phi$ and let
\begin{equation}\label{decomppsi}
    \Psi=\Psi_1\sqcup\dots\sqcup\Psi_r
\end{equation}
 be the decomposition of $\Psi$ into irreducible subsets.
Then we have 
\begin{equation}\label{decompgrpsi}
    Gr(\Psi)=Gr(\Psi_1)\sqcup\dots \sqcup Gr(\Psi_r)
\end{equation}
 is the decomposition of $Gr(\Psi)$ into irreducible subsets.
\begin{prop}\label{prop:subroot}
Let $\Psi$ be a subset of $\Phi.$  
\begin{enumerate}
     \item Suppose  $\Psi$ is symmetric, then each $\Psi_i$ in \eqref{decomppsi} and each $Gr(\Psi_i)$ in \eqref{decompgrpsi} are symmetric. 
     \item Suppose $\Psi$ is closed in $\Phi$, then each $\Psi_i$ in \eqref{decomppsi} is closed in $\Phi$. In addition if $Gr(\Psi) $ is closed in $\mathring{\Phi}$, then each $Gr(\Psi_i)$ in \eqref{decompgrpsi} is closed in $\mathring{\Phi}$.
     \item  Suppose $\Psi$ is symmetric closed in $\Phi$. Then $\alpha+\beta\notin \Phi$ if $\alpha\in \Psi_i \text{ and } \beta\in \Psi_j$ for $i\neq j$.
     \item Assume that $Gr(\Psi)$ is closed in $\mathring{\Phi}$. Then
     \begin{enumerate}
         \item  $\Psi$ is symmetric closed in $\Phi$ if and only if each $\Psi_i$ is symmetric closed in $\Phi$.
         \item  $\Psi$ is a closed subroot system of $\Phi$ if and only if each $\Psi_i$ is  a closed subroot system of $\Phi$.

     \end{enumerate}

\end{enumerate}
	\end{prop}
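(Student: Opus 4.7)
The plan is to exploit a single unifying fact: distinct irreducible components $\Psi_i, \Psi_j$ lie in mutually orthogonal subspaces with respect to the inner product inherited from $E$ (which descends to a form on the affine span with null direction $\delta$), while every real root has strictly positive gradient norm. In each of the five items the argument reduces to: if some root were forced into the ``wrong'' component, an inner product computation would yield $(\bar\gamma, \bar\gamma) = 0$ for a real root $\gamma$, which is impossible.

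For (1), I would use uniqueness of the decomposition into orthogonal irreducible pieces: $-\Psi$ has the same decomposition, so $-\Psi_i = \Psi_{\sigma(i)}$ for some permutation $\sigma$; the identity $(\bar\alpha, -\bar\alpha) = -(\bar\alpha,\bar\alpha) \neq 0$ prevents $\sigma(i) \neq i$, and symmetry of $Gr(\Psi_i)$ then follows by unpacking definitions. For (2), if $\alpha, \beta \in \Psi_i$ and $\alpha+\beta$ is placed by closedness in some $\Psi_k$ with $k \neq i$, orthogonality between components yields $(\bar\alpha + \bar\beta, \bar\alpha) = (\bar\alpha + \bar\beta, \bar\beta) = 0$, hence $(\bar{\alpha+\beta}, \bar{\alpha+\beta}) = 0$, contradicting $\alpha+\beta \in \Phi$; the gradient version is the same calculation inside $\mathring\Phi$ once $Gr(\Psi)$ is closed. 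Part (3) runs the same computation in reverse: for $\alpha \in \Psi_i, \beta \in \Psi_j$ with $i \neq j$ and $\alpha+\beta \in \Phi$, closedness places $\alpha+\beta$ in some $\Psi_k$, but $(\bar{\alpha+\beta}, \bar\alpha) = (\bar\alpha, \bar\alpha) > 0$ forces $k = i$, while $(\bar{\alpha+\beta}, \bar\beta) = (\bar\beta, \bar\beta) > 0$ forces $k = j$, a contradiction.

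For (4a), the $(\Rightarrow)$ direction is immediate from (1) and (2). For $(\Leftarrow)$, symmetry of $\Psi$ is clear, and the only nontrivial case for closedness is a sum $\alpha + \beta \in \Phi$ with $\alpha \in \Psi_i, \beta \in \Psi_j, i \neq j$; here I would invoke the hypothesis that $Gr(\Psi)$ is closed in $\mathring\Phi$ to place $\bar\alpha + \bar\beta$ in some $Gr(\Psi_k)$, then run the inner product argument of (3) in the finite setting to force $i = j$, making this case vacuous. For (4b), the key observation is that any subroot system is automatically symmetric (since $s_\alpha(\alpha) = -\alpha$), so modulo (4a) the only new content is reflection stability. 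The expansion $(s_\alpha(\beta), \bar\gamma) = (\bar\beta, \bar\gamma) - \langle \beta, \alpha^\vee\rangle (\bar\alpha, \bar\gamma)$ shows that, when $\alpha, \beta \in \Psi_i$, the root $s_\alpha(\beta)$ is orthogonal to every $\Psi_j$ with $j \neq i$ and must lie in $\Psi_i$; conversely, if $\alpha \in \Psi_i$ and $\beta \in \Psi_j$ with $i \neq j$, orthogonality gives $\langle \beta, \alpha^\vee\rangle = 0$ and hence $s_\alpha(\beta) = \beta \in \Psi$.

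The step I expect to require the most care is $(4a)(\Leftarrow)$: without the hypothesis that $Gr(\Psi)$ is closed in $\mathring\Phi$, two orthogonal finite roots can sum to a root (as in $B_2$, where the orthogonal short roots $\alpha$ and $\alpha+\beta$ sum to the long root $2\alpha+\beta$), so the naive disjoint union of closed pieces would fail to be closed in $\Phi$. The closedness of the gradient is precisely what defuses this finite-root-system pathology, and must be inserted at exactly this point in the argument.
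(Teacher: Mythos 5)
Your proof is correct, and it establishes every part of the proposition by one uniform device (orthogonality of distinct components plus positivity of $(\bar\gamma,\bar\gamma)$ for a real root $\gamma$), whereas the paper writes out only 4(a) and declares the rest routine. The overall architecture of your 4(a)$(\Leftarrow)$ matches the paper's — reduce to showing that a cross-component sum $\alpha+\beta$ with $\alpha\in\Psi_i$, $\beta\in\Psi_j$, $i\neq j$, cannot be a root, using gradient-closedness to force $\bar\alpha+\bar\beta$ into $Gr(\Psi)$ — but the final contradiction is reached differently. The paper first upgrades $Gr(\Psi)$ to a closed subroot system via \Cref{keylem:finite}, chooses a base $B=\cup_i B_i$, and appeals to the connected-support property of roots of a finite root system (Carter, Prop.\ 16.21); you instead compute $(\bar\alpha+\bar\beta,\bar\alpha)=(\bar\alpha,\bar\alpha)>0$ and $(\bar\alpha+\bar\beta,\bar\beta)=(\bar\beta,\bar\beta)>0$, which would place $\bar\alpha+\bar\beta$ in the two disjoint orthogonal pieces $Gr(\Psi_i)$ and $Gr(\Psi_j)$ simultaneously. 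Your route is more self-contained: it needs neither the existence of a base nor the external support lemma, and the same computation also delivers (2), (3) and the reflection-stability claims in (4b) (where the identity $\overline{s_\alpha(\beta)}=\bar\beta-\langle\beta,\alpha^\vee\rangle\bar\alpha$ and the vanishing of $\langle\beta,\alpha^\vee\rangle$ across components do all the work). You also correctly isolate the exact point where the hypothesis that $Gr(\Psi)$ is closed in $\mathring{\Phi}$ is indispensable, consistent with the paper's Remark on $D_{n+1}^{(2)}$.
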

	\begin{proof}
	We only prove the statement $4(a)$ as all other statements are easy to check. The forward direction follows from $(3)$. For the converse part, we assume that  each $\Psi_i$ is symmetric closed in $\Phi.$
	Since $Gr(\Psi)$ is symmetric closed in $\mathring{\Phi}$, it must be a closed subroot system. Hence each
	$Gr(\Psi_i)$ is a closed subroot system of $\mathring{\Phi}$ and it has a base say $B_i$. Then $B:=\cup_i B_i$ is a base for $Gr(\Psi)$. Let $\alpha+r\delta\in \Psi_i$ and $\beta+s\delta\in \Psi_j$ with $i\neq j$. If $\alpha+\beta+(r+s)\delta\in \Phi$, then $\alpha+\beta+(r+s)\delta\in \Psi$ since $\Psi$ is closed. But this implies that $\alpha+\beta\in Gr(\Psi)$, which is impossible because any root of a finite root system has connected support, see \cite[Proposition 16.21]{Carter}. This completes the proof.
	\end{proof}
	
	\begin{rem}\label{remark1}
	The fourth statement in \Cref{prop:subroot} is false in general if we drop the condition $Gr(\Psi)$ is closed. For example, let us consider $\Phi$ be of type $D_{n+1}^{(2)}, n
	\ge 2$. Set $I_n=\{1,2,\dots,n\}$.  For $I\subseteq I_n$, let $$\Psi_{I}:=\{\pm \epsilon_k+(2\bz+1)\delta:k\in I\}\cup \{\pm (\epsilon_k\pm \epsilon_\ell)+2\bz\delta:k\neq \ell\in I\}.$$
	It is clear that $\Psi_I$ is a symmetric closed subset of $\Phi$ for each $I.$ Now take $\Psi=\Psi_I\cup \Psi_J$ where $I, J$ form a partition of $I_n$, then $\Psi$ is not a closed subset of $\Phi.$

	\end{rem}

\subsection{}\label{regularsubalgebras}
Given  $S\subseteq \Phi$, we define $\mathfrak{g}(S)$ by the subalgebra of $\mathfrak{g}$ generated by $\mathfrak{g}_\alpha, \alpha\in S.$ Let $\Delta(S)$ be the set of roots of
$\mathfrak{g}(S)$.  
We end this section with the following proposition. 

\begin{prop}\label{smallestclosed}
Let $\Psi$ be a symmetric subset of $\Phi$. Then $\Delta(\Psi)\cap \Phi$ is a minimal closed subroot system of $\Phi$ containing $\Psi.$
\end{prop}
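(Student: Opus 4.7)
My plan is to verify three properties of $\Delta(\Psi)\cap\Phi$ in sequence: it contains $\Psi$; it is a closed subroot system of $\Phi$; and it is contained in every closed subroot system of $\Phi$ containing $\Psi$. The first is immediate: $\mathfrak{g}_\alpha\subseteq\mathfrak{g}(\Psi)$ for every $\alpha\in\Psi$ by definition of $\mathfrak{g}(\Psi)$.

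For the second, since $\Psi=-\Psi$, the Chevalley involution $\omega$ of $\mathfrak{g}$ carries the generating set of $\mathfrak{g}(\Psi)$ into itself up to sign, so $\mathfrak{g}(\Psi)$ is $\omega$-stable and hence $\Delta(\Psi)=-\Delta(\Psi)$. If $\alpha,\beta\in\Delta(\Psi)\cap\Phi$ with $\alpha+\beta\in\Phi$, then since real root spaces are one-dimensional both $e_\alpha,e_\beta$ lie in $\mathfrak{g}(\Psi)$, and the standard Kac--Moody identity $[\mathfrak{g}_\alpha,\mathfrak{g}_\beta]=\mathfrak{g}_{\alpha+\beta}$ (valid whenever $\alpha,\beta$ and $\alpha+\beta$ are all real) forces $\alpha+\beta\in\Delta(\Psi)\cap\Phi$, giving closedness. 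For the subroot property, given $\alpha\in\Delta(\Psi)\cap\Phi$ both $e_\alpha$ and $e_{-\alpha}$ are in $\mathfrak{g}(\Psi)$; since $\mathrm{ad}\,e_{\pm\alpha}$ is locally nilpotent at a real root, the triple product $\tau_\alpha=\exp(\mathrm{ad}\,e_\alpha)\exp(-\mathrm{ad}\,e_{-\alpha})\exp(\mathrm{ad}\,e_\alpha)$ is a well-defined automorphism of $\mathfrak{g}$ that preserves $\mathfrak{g}(\Psi)$ and sends $\mathfrak{g}_\beta$ onto $\mathfrak{g}_{s_\alpha(\beta)}$; applied to an arbitrary $\beta\in\Delta(\Psi)\cap\Phi$ this yields $s_\alpha(\beta)\in\Delta(\Psi)\cap\Phi$.

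For minimality, the monotonicity $\Psi\subseteq\Psi'\Rightarrow\mathfrak{g}(\Psi)\subseteq\mathfrak{g}(\Psi')$ yields $\Delta(\Psi)\cap\Phi\subseteq\Delta(\Psi')\cap\Phi$, so the problem reduces to proving the identity $\Delta(\Psi')\cap\Phi=\Psi'$ for every closed subroot system $\Psi'$ of $\Phi$. The inclusion $\Psi'\subseteq\Delta(\Psi')\cap\Phi$ is trivial. For the reverse I would induct on the length $k$ of an iterated bracket $[e_{\beta_1},[\dots,e_{\beta_k}]]$ with $\beta_i\in\Psi'$ representing an element $\gamma\in\Delta(\Psi')\cap\Phi$: the cases $k=1,2$ follow from closedness of $\Psi'$, and for $k\ge 3$ one considers the inner partial sum $\eta=\beta_2+\cdots+\beta_k$. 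If $\eta$ is real, induction gives $\eta\in\Psi'$ and then closedness forces $\gamma=\beta_1+\eta\in\Psi'$; if $\eta$ is imaginary (necessarily in $\bz\delta$), one repeatedly uses the Jacobi identity to reorganize the bracket until an ordering with real intermediate partial sums is reached, appealing to the decomposition of $\Psi'$ into irreducible components (\Cref{prop:subroot}) and to the explicit combinatorial structure of irreducible closed subroot systems of $\Phi$ developed in the earlier sections.

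The main obstacle is precisely this last reshuffling step in the case $k\ge 3$ with imaginary intermediate sums: plain closedness of $\Psi'$ cannot absorb an imaginary partial sum, and one must combine the subroot closure of $\Psi'$ with the explicit structure theorem for closed subroot systems to guarantee the existence of a suitable ordering. Once that is available the induction closes without incident and minimality follows.
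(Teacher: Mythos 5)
Your overall architecture coincides with the paper's: symmetry of $\Delta(\Psi)$ via the Chevalley involution, the closed subroot system property via the automorphisms $\exp(\mathrm{ad}\,e_\alpha)\exp(-\mathrm{ad}\,e_{-\alpha})\exp(\mathrm{ad}\,e_\alpha)$, and minimality via monotonicity of $S\mapsto\mathfrak g(S)$ combined with the identity $\Delta(S)\cap\Phi=S$ for closed subroot systems $S$. The difference is that the paper outsources the second and third ingredients to \cite[Lemma 11.1.2]{RV19} and \cite[Corollary 11.1.5]{RV19}, whereas you try to prove both from scratch. Your reproof of the subroot-system property is fine (the only thing to note is that you need $-\alpha\in\Delta(\Psi)$ before forming $\tau_\alpha$, which you do get from the involution step, and that $[\mathfrak g_\alpha,\mathfrak g_\beta]=\mathfrak g_{\alpha+\beta}$ for real $\alpha,\beta,\alpha+\beta$ deserves a citation or a check in the loop realization).

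The genuine gap is in the minimality step. Your reduction is correct: everything comes down to $\Delta(S)\cap\Phi=S$ for a closed subroot system $S$. But your bracket-length induction does not close, and you say so yourself: when an inner partial sum $\beta_j+\cdots+\beta_k$ lands in $\mathbb Z\delta$, closedness of $S$ gives you nothing, and the subsequent bracket with an imaginary-root vector produces elements of $\mathfrak g_{\beta+s\delta}$ with $s$ \emph{not} a priori in $Z_\beta(S)$. This is not a removable nuisance: for a merely symmetric closed $\Psi$ that is not a subroot system (e.g.\ $\{\pm\alpha\pm\delta\}$ in $A_1^{(1)}$) the analogous statement is false, since $[\mathfrak g_{\alpha+\delta},\mathfrak g_{-\alpha+\delta}]\ni\alpha^\vee\otimes t^2$ and bracketing again produces $\mathfrak g_{\alpha+3\delta}\not\subseteq\bigoplus_{\gamma\in\Psi}\mathfrak g_\gamma$. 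So the "reshuffling" you defer to really does require the coset structure of the $Z_\alpha$'s for closed subroot systems, i.e.\ it is the actual content of \cite[Corollary 11.1.5]{RV19}, not a routine Jacobi-identity manipulation. As written, your proof asserts rather than establishes this step; either carry out the argument using the structure theorems, or simply cite \cite[Corollary 11.1.5]{RV19} as the paper does, which closes the gap immediately.
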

\begin{proof}
    By \cite[Lemma 11.1.2, Page 1301]{RV19}, it is enough to prove that $\Delta(\Psi)=-\Delta(\Psi)$. Suppose $\beta\in \Delta(\Psi)$, then there exists $\beta_1, \ldots, \beta_r\in \Psi$ such that $\beta=\beta_1+\cdots +\beta_r$. 
    Since the Chevalley involution \cite[Chapter 1, Page 7]{K90} of $\mathfrak{g}$ takes $\mathfrak{g}_\alpha$ to $\mathfrak{g}_{-\alpha}$ for all $\alpha\in \Delta$, we have
    $$[\mathfrak{g}_{\alpha_1}, \cdots [\mathfrak{g}_{\alpha_{r-1}}, \mathfrak{g}_{\alpha_r}]] \neq 0 \iff [\mathfrak{g}_{-\alpha_1}, \cdots [\mathfrak{g}_{-\alpha_{r-1}}, \mathfrak{g}_{-\alpha_r}]] \neq 0.$$
     Since $\Psi$ is symmetric, we have  $-\beta_1, \ldots, -\beta_r\in \Psi$
     and $0\neq [\mathfrak{g}_{-\alpha_1}, \cdots [\mathfrak{g}_{-\alpha_{r-1}}, \mathfrak{g}_{-\alpha_r}]]\subseteq \mathfrak{g}(\Psi)$.
    This implies $-\beta\in \Delta$  . Hence we have $\Delta(\Psi)=-\Delta(\Psi).$
    
    Suppose $\Psi\subseteq S$ is a closed subroot system of $\Phi.$ Then we have
    $\mathfrak{g}(\Psi)\subseteq \mathfrak{g}(S)$ and the real root of  $\mathfrak{g}(S)$ is equal to $S$ by \cite[Corollary 11.1.5, Page 1304]{RV19}. Since the real roots of $\mathfrak{g}(\Psi)$ are also real roots of $\mathfrak{g}(S)$, we must have $\Delta(\Psi)\cap \Phi\subseteq S.$ 
\end{proof}

	
\section{Symmetric closed subsets of affine root systems}\label{closedsect}
	In this section, we fix a real affine root system $\Phi$ which is not of type $A_{2n}^{(2)}$. We need the following notation: for $k\in \bz_+,$ denote by $\pi_k:\mathbb{Z}\to \mathbb{Z}/k\mathbb{Z}$ the qutioent map $x\to x\ (\mathrm{mod}\ k)$. Recall that $m$ is the lacing number associated with $\mathring{\Phi}.$

\subsection{}  We define semi-closed subset of finite root systems as in \cite[Definition 4.4.1]{RV19}.
\begin{defn}
	A symmetric subset $\mathring{\Psi}$ of $\mathring{\Phi}$ is called semi-closed subset of $\mathring{\Phi}$ if 
	\begin{enumerate}
	    \item $\mathring{\Psi}$ is not closed in $\mathring{\Phi}$.
	    \item If $\alpha,\beta\in \mathring{\Psi}$ such that $\alpha+\beta\in \mathring{\Phi}\setminus \mathring{\Psi}$, then 
	    $(\alpha, \beta, \alpha+\beta)$ is of type $(s,s,\ell)$.
	\end{enumerate}
	\end{defn}
We need the following simple lemma, see \cite[Proposition 4.1.2]{RV19}. 
	\begin{lem}\label{keylem:affine}
		Let $\Phi$ be a real twisted affine root system not of type $A_{2n}^{(2)}$. Let $\Psi\leq \Phi$ be a closed subset. Then $Gr(\Psi)$ is either closed or semi-closed subset of $\mathring{\Phi}$. \qed
	\end{lem}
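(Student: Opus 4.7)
The plan is to argue by contradiction: assume $Gr(\Psi)$ is not closed in $\mathring{\Phi}$ and pick witnesses $\alpha, \beta \in Gr(\Psi)$ with $\gamma := \alpha + \beta \in \mathring{\Phi} \setminus Gr(\Psi)$. I aim to show that the triple $(\alpha, \beta, \gamma)$ must have length type $(s, s, \ell)$, thereby verifying the semi-closed condition for $Gr(\Psi)$.

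The key reduction is the following. Choose integers $r, s$ with $\tilde\alpha := \alpha + r\delta \in \Psi$ and $\tilde\beta := \beta + s\delta \in \Psi$; then if $\tilde\alpha + \tilde\beta = \gamma + (r+s)\delta$ happens to lie in $\Phi$, closedness of $\Psi$ forces it into $\Psi$ and hence $\gamma \in Gr(\Psi)$, contradicting the choice. So I would show that in every length-type other than $(s,s,\ell)$, the sum $\gamma + (r+s)\delta$ is automatically a root, producing a contradiction.

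Using the table from Section \ref{phi} -- for twisted $\Phi$ other than $A_{2n}^{(2)}$ one has $\Lambda_s = \bz$ and $\Lambda_\ell = m\bz$, with $m \in \{2,3\}$ the lacing number -- I would reason as follows. First, if $\gamma$ is short then $\Lambda_\gamma = \bz$, so $\gamma + (r+s)\delta \in \Phi$ for free; hence $\gamma$ must be long. Now suppose the type is $(\ell, \ell, \ell)$: then $r, s \in m\bz$, so $r+s \in m\bz = \Lambda_\gamma$, again putting $\gamma + (r+s)\delta$ in $\Phi$. So only the mixed cases $(s,\ell,\ell)$ and $(\ell,s,\ell)$ could possibly survive beyond $(s,s,\ell)$, and these I would eliminate using \Cref{longroot}: expressing all three roots in a base of $\mathring{\Phi}$, the long root of the mixed pair has every short-simple-root coefficient divisible by $m$, while the short root has at least one such coefficient not divisible by $m$; the sum $\gamma$ therefore inherits the non-divisibility and would be short, contradicting $\gamma$ being long. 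Hence $(\alpha,\beta,\gamma)$ is of type $(s,s,\ell)$, as required.

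The main obstacle, though mild, is the $(\ell,\ell,\ell)$ case, since such triples genuinely exist in finite root systems of types $B_n, F_4, G_2$ and cannot be ruled out by finite-root-system combinatorics alone; the argument goes through only because the twisted affine structure forces $\Lambda_\ell = m\bz$. This is precisely why the statement excludes $A_{2n}^{(2)}$, where $\Lambda_s$ is not $\bz$ but a half-integer coset, so the short-$\gamma$ step would fail and a separate analysis is needed.
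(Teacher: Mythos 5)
Your argument is correct and is essentially the standard one: the paper gives no proof of this lemma (it only cites [RV19, Proposition~4.1.2]), and your case analysis — $\gamma$ short is impossible because $\Lambda_s=\mathbb{Z}$ forces $\gamma+(r+s)\delta\in\Phi$, type $(\ell,\ell,\ell)$ is impossible because $\Lambda_\ell=m\mathbb{Z}$ absorbs $r+s$, and the mixed types $(s,\ell,\ell)$, $(\ell,s,\ell)$ are excluded in $\mathring{\Phi}$ by the divisibility criterion of \Cref{longroot} — is exactly the argument of the cited reference. The only point worth noting is that the paper's definition of \emph{semi-closed} also requires $Gr(\Psi)$ to be symmetric, which does not follow from closedness of $\Psi$ alone but is automatic in the only setting where the lemma is invoked (symmetric $\Psi$), so your proof supplies all the substantive content.
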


	\subsection{}
	We now consider the case when $Gr(\Psi)$ is a closed subroot system of $\mathring{\Phi}$. 
	The following proposition will be used as a primary tool to prove our main theorem in most of the cases.
\begin{prop}\label{prop:1}
Let $\Phi$ be a real affine root system not of type $A_{2n}^{(2)}$. 
Let $\Psi$ be a symmetric closed subset of $\Phi$ such that $Gr(\Psi)$ is closed in $\mathring{\Phi}$ and  no irreducible component of $Gr(\Psi)$ is of type $A_1$. Suppose that there is  a $\bz$-linear function $p:Gr(\Psi)\to\bz$, $\alpha\mapsto p_\alpha$, such that $p_\alpha\in Z_\alpha,$\ and $|\pi_m(Z_\alpha)|=1, \text{for all} \ \alpha\in Gr(\Psi)$
Then we have
\begin{enumerate}
\item $Z_\alpha'=Z_\beta'=Z_{\alpha+\beta}':=A$  for 
$(\alpha,\beta, \alpha+\beta)\in Gr(\Psi)^{\times 3}$ and $A$ is a subgroup of $m\bz$. \item For any $\alpha,\beta\in\Psi$, we have $s_\alpha(\beta)\in\Psi$. In particular, $\Psi$ is a closed subroot system of $\Phi$.
\end{enumerate}
\end{prop}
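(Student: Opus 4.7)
The plan is to deduce (1) from the closedness and symmetry of $\Psi$ applied to a pair of companion triples in $Gr(\Psi)^{\times 3}$, and then to bootstrap to (2) by combining part (1) with the affine reflection formula and a root-string argument.

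For (1), I would first observe that $|\pi_m(Z_\alpha)|=1$ together with $p_\alpha\in Z_\alpha$ forces $Z_\alpha\subseteq p_\alpha+m\bz$, so $Z_\alpha'\subseteq m\bz$ for every $\alpha\in Gr(\Psi)$. Next, for any triple $(\alpha,\beta,\alpha+\beta)\in Gr(\Psi)^{\times 3}$, closedness of $\Psi$ gives $Z_\alpha+Z_\beta\subseteq Z_{\alpha+\beta}$, and the $\bz$-linearity of $p$ (using $p_\alpha+p_\beta=p_{\alpha+\beta}$) translates this to $Z_\alpha'+Z_\beta'\subseteq Z_{\alpha+\beta}'$. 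Since $0\in Z_\alpha'\cap Z_\beta'$, we deduce $Z_\alpha',Z_\beta'\subseteq Z_{\alpha+\beta}'$. For the reverse containments I would apply the same reasoning to the companion triple $(-\alpha,\alpha+\beta,\beta)\in Gr(\Psi)^{\times 3}$, which is available because $\Psi$ is symmetric: this gives $-Z_\alpha'+Z_{\alpha+\beta}'\subseteq Z_\beta'$, whence $Z_{\alpha+\beta}'\subseteq Z_\beta'$, and swapping the roles of $\alpha$ and $\beta$ produces $Z_{\alpha+\beta}'\subseteq Z_\alpha'$. Hence $Z_\alpha'=Z_\beta'=Z_{\alpha+\beta}'=:A$. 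The same companion triple forces $-A=Z_{-\alpha}'=Z_\beta'=A$, and combined with $A+A\subseteq A$ (from the original triple applied to itself) and $0\in A\subseteq m\bz$, this shows that $A$ is a subgroup of $m\bz$.

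For (2), write $\alpha=\alpha_0+r\delta$ and $\beta=\beta_0+s\delta$ with $\alpha_0,\beta_0\in Gr(\Psi)$. Using $(\delta,\delta)=(\delta,\alpha_0)=0$, the affine reflection formula simplifies to
\[
s_\alpha(\beta)=s_{\alpha_0}(\beta_0)+\big(s-\langle\beta_0,\alpha_0^\vee\rangle r\big)\delta.
\]
If $\alpha_0,\beta_0$ lie in different irreducible components of $Gr(\Psi)$, then $\langle\beta_0,\alpha_0^\vee\rangle=0$ and $s_\alpha(\beta)=\beta\in\Psi$. Otherwise, after possibly replacing $\alpha_0$ by $-\alpha_0$, assume $n:=-\langle\beta_0,\alpha_0^\vee\rangle\ge 0$; the entire $\alpha_0$-string $\beta_0,\beta_0+\alpha_0,\dots,\beta_0+n\alpha_0=s_{\alpha_0}(\beta_0)$ lies in $Gr(\Psi)$ by \cref{keylem:finite} and closedness, producing a chain of consecutive triples all sharing the root $\alpha_0$. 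Iterating part (1) along this chain gives $Z_{\alpha_0}'=Z_{\beta_0}'=Z_{s_{\alpha_0}(\beta_0)}'=A$. The $\bz$-linearity of $p$ gives $p_{s_{\alpha_0}(\beta_0)}=p_{\beta_0}-\langle\beta_0,\alpha_0^\vee\rangle p_{\alpha_0}$, so
\[
s-\langle\beta_0,\alpha_0^\vee\rangle r-p_{s_{\alpha_0}(\beta_0)}\in A-\langle\beta_0,\alpha_0^\vee\rangle A\subseteq A,
\]
and therefore $s_\alpha(\beta)\in\Psi$. Applied with $\beta\in\Psi$ arbitrary, this gives the subroot system property.

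The step I expect to be most delicate is checking that part (1) holds uniformly across the mixed-length triples $(s,s,\ell)$, $(s,\ell,s)$, and $(\ell,\ell,\ell)$ that occur in types $B_n$, $C_n$, $F_4$, $G_2$: the companion-triple trick should work regardless of length pattern, but the book-keeping needs verification. The hypothesis that no irreducible component of $Gr(\Psi)$ is of type $A_1$ is essential, for two reasons: it guarantees that every root of $Gr(\Psi)$ actually participates in some triple (so that part (1) has content and pins down $Z_{\alpha_0}'$ as a subgroup $A$), and it ensures the root-string chain in part (2) has enough length to transmit the equality $Z_{\alpha_0}'=Z_{\beta_0}'$ even in the degenerate proportional case $\beta_0=\pm\alpha_0$.
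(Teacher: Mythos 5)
Your overall strategy coincides with the paper's: prove $Z'_\alpha+Z'_\beta\subseteq Z'_{\alpha+\beta}$ for triples, run the companion triples $(-\alpha,\alpha+\beta,\beta)$ and $(\alpha+\beta,-\beta,\alpha)$ to force equality and the subgroup property, then deduce the reflection property. But two steps are asserted rather than carried out, and they are exactly the places where the hypotheses of the proposition do their work.

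First, the claim that ``closedness of $\Psi$ gives $Z_\alpha+Z_\beta\subseteq Z_{\alpha+\beta}$'' is not justified when $\Phi$ is twisted and the triple has type $(s,s,\ell)$: for $r\in Z_\alpha$, $s\in Z_\beta$, closedness only yields $(\alpha+\beta)+(r+s)\delta\in\Psi$ \emph{provided this element is a root of} $\Phi$, and for a long gradient that requires $r+s\in m\bz$. This is precisely what the hypotheses supply: $r\equiv p_\alpha$ and $s\equiv p_\beta \ (\mathrm{mod}\ m)$ because $|\pi_m(Z_\gamma)|=1$ and $p_\gamma\in Z_\gamma$, while $p_\alpha+p_\beta=p_{\alpha+\beta}\in Z_{\alpha+\beta}\subseteq m\bz$ by $\bz$-linearity, so $r+s\equiv 0\ (\mathrm{mod}\ m)$. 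You flag this case as ``delicate'' and defer it, but it must be done; all the ingredients are already in your first paragraph (you established $Z'_\gamma\subseteq m\bz$), so the fix is one line. The other length patterns are harmless because a short root $\gamma$ has $\Lambda_\gamma=\bz$.

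Second, in part (2) the root-string chain does not cover the proportional case $\beta_0=\pm\alpha_0$: the $\alpha_0$-string from $-\alpha_0$ to $\alpha_0$ passes through $0$, so there is no chain of consecutive triples in $Gr(\Psi)^{\times 3}$ transmitting the equality of the $Z'$'s. What is needed there --- and what the paper does --- is to invoke the no-$A_1$-component hypothesis to produce $\gamma_0$ with $\alpha_0+\gamma_0\in Gr(\Psi)$, apply part (1) to that triple to conclude that $Z'_{\alpha_0}=A$ is a subgroup of $m\bz$, and then your displayed computation $s-\langle\beta_0,\alpha_0^\vee\rangle r-p_{s_{\alpha_0}(\beta_0)}\in A-\langle\beta_0,\alpha_0^\vee\rangle A\subseteq A$ applies verbatim, using $Z'_{-\alpha_0}=-A=A$. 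Your closing paragraph gestures at this role of the hypothesis, but the mechanism you name (the root string) is the wrong one for this case. With these two repairs the proof is correct; in the non-proportional case your detour through part (1) and the finite string in $Gr(\Psi)$ is slightly less direct than the paper's appeal to the unbroken string property of $\Delta$ itself, but both work.
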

\begin{proof}
Recall that $Z_\alpha'=Z_\alpha-p_\alpha$ for $\alpha\in Gr(\Psi).$
Let $(\alpha, \beta, \alpha+\beta)\in Gr(\Psi)^{\times 3}$. We claim that $$Z_\alpha+Z_\beta\subseteq Z_{\alpha+\beta}.$$Let $r\in Z_\alpha$ and $s\in Z_\beta$. If $\alpha+\beta$ is short or $\Phi$ is untwisted then the result is immediate. So assume that $\alpha+\beta$ is long. Then we have
$p_{\alpha+\beta}\equiv 0 \,(\mathrm{mod}\,m)$.
 Since  $r\equiv p_\alpha\,(\mathrm{mod}\, m)\text{ and } s\equiv p_\beta\,(\mathrm{mod}\, m)$ and the map $p$ is $\bz$-linear we have $r+s\equiv 0 \,(\mathrm{mod}\,m)$.
 This implies $\alpha+\beta+(r+s)\delta\in \Phi$. Since $\Psi$ is closed, it follows that $\alpha+\beta+(r+s)\delta\in \Psi$ and so $r+s\in Z_{\alpha+\beta}$.
 Hence $Z_\alpha+Z_\beta\subseteq Z_{\alpha+\beta}.$ Now using the linearity of $p$, we get
 \begin{equation}\label{keyfact1}
     Z_\alpha'+Z_\beta'\subseteq Z_{\alpha+\beta}'
 \end{equation}
for all $(\alpha, \beta, \alpha+\beta)\in Gr(\Psi)^{\times 3}$. 
Since \eqref{keyfact1} is true for all 
tuples $(\alpha,\beta, \alpha+\beta)$ such that $\alpha, \beta, \alpha+\beta\in Gr(\Psi)$.
We have
$$ Z_\alpha'+Z_\beta' \subseteq Z_{\alpha+\beta}',\ \   Z_{\alpha+\beta}'+Z_{-\alpha}'\subseteq Z_\beta',\ \text{and} \
   Z_{\alpha+\beta}'+Z_{-\beta}' \subseteq Z_\alpha'.
$$ This implies $A=Z_\alpha'=Z_\beta'=Z_{\alpha+\beta}'$.
 It is easy to see that $Z_{-\mu}'=-Z_{\mu}'$ for all $\mu\in$ Gr$(\Psi)$. Thus $A\subseteq m\bz$ satisfies $A=-A$ and $A+A\subseteq A$, so it must be a subgroup of $m\bz$.

\medskip
To prove $(2)$, let $\alpha, \beta \in \Psi.$ Write $\alpha=\alpha'+s\delta,\, \beta=\beta'+r\delta$. We claim that
$s_{\alpha}(\beta)=\beta-(\beta,\alpha^\vee)\alpha\in \Psi$. Suppose $\beta'\neq \pm \alpha'$, then using \say{unbroken string property} of $\Delta$ we get
$s_{\alpha}(\beta)\in \Psi$ as $\Psi$ is closed and any root of the form $\beta+p\alpha, p\in \bz,$ must be real. 
Now assume that $\beta'= \pm \alpha'$. Since no irreducible component of $Gr(\Psi)$ is of type $A_1$, there is a $\gamma'\in Gr(\Psi)$  such that $\alpha'+\gamma'\in$ Gr$(\Psi)$. 
Applying the Part (1) for the triple $(\alpha',\gamma',\alpha'+\gamma')$, we get that $Z_{\alpha'}'$ is a subgroup of $m\bz$.
We have
$$s_{\alpha'+s\delta}(\pm\alpha'+r\delta)=\mp\alpha'+(r\mp 2s)\delta$$
Since $s\in Z_{\alpha'},\,r\in Z_{\pm\alpha'}$, there are $z_1,z_2\in Z_{\alpha'}'$ such that $s=z_1+p_{\alpha'}$ and $r=z_2+ p_{\pm \alpha'}$.
Now we have $r\mp 2s=z_2\pm p_{\alpha'}\mp 2(z_1+p_{\alpha'})=(z_2\mp 2z_1)\mp p_{\alpha'}\in Z_{\alpha'}'+p_{\mp \alpha'}$ as $Z_{\alpha'}'$ is a group. 
Hence $s_{\alpha}(\beta)\in \Psi$ and this completes the proof.
\end{proof}

\subsection{}\label{closedcase} When $Gr(\Psi)$ is closed in $\mathring{\Phi}$,  using \Cref{prop:subroot} 4(a), we see that to classify a symmetric closed subsets of $\Phi$ we only need to classify irreducible symmetric closed subsets of $\Phi.$ 
So without loss of any generality we assume that $\Psi$ is irreducible symmetric and closed in $\Phi$ in what follows.


\begin{prop}\label{prop:equal1}
Let $\Phi$ be a real affine root system not of type $A_{2n}^{(2)}$.	    
Let $\Psi$ be an irreducible symmetric closed subset of $\Phi$ such that $Gr(\Psi)$ is closed in $\mathring{\Phi}$ and it is not of type $A_1$. If there exists a short root $\beta$ such that 
$|\pi_m(Z_\beta)|=1$, then
there exists $n\in \bz_+$ and a $\bz$-linear function $p:Gr(\Psi)\to\bz$, $\alpha\mapsto p_\alpha$, with $p_\alpha\in Z_\alpha,\, \text{for all} \ \alpha\in Gr(\Psi)$ such that $$\Psi=\{\alpha+(p_\alpha+n\bz)\delta:\alpha\in Gr(\Psi)\}$$
In particular $\Psi$ is a closed subroot system of $\Phi$.	\end{prop}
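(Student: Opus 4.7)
The plan is to reduce the proposition to \Cref{prop:1} by upgrading the hypothesis so that $|\pi_m(Z_\alpha)| = 1$ holds for every $\alpha \in Gr(\Psi)$, not only for the given short root $\beta$; once this is done, the $\bz$-linear lift $p$ required by \Cref{prop:1} is produced via \Cref{existp}. The main obstacle is the propagation step.

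\emph{Propagation.} For $\alpha \in Gr(\Psi)$ long in $\mathring\Phi$, the table in \Cref{phi} gives $Z_\alpha \subseteq \Lambda_\ell = m\bz$, so $|\pi_m(Z_\alpha)| = 1$ automatically. For $\alpha$ short, the Weyl group of the irreducible root system $Gr(\Psi)$ acts transitively on its short roots, so $\alpha = s_{\gamma_k}\cdots s_{\gamma_1}(\beta)$ for simple roots $\gamma_1, \dots, \gamma_k$ of $Gr(\Psi)$, producing a chain $\beta = \beta_0, \beta_1, \dots, \beta_k = \alpha$ of short roots with $\beta_i = s_{\gamma_i}(\beta_{i-1})$. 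We propagate $|\pi_m(Z_{\beta_i})| = 1$ inductively: at each step either $\beta_{i-1} = \pm\gamma_i$, in which case $\beta_i = -\beta_{i-1}$ and the property transfers through $Z_{-\mu} = -Z_\mu$, or $\langle \beta_{i-1}, \gamma_i^\vee\rangle \in \{-1,0,1\}$ so that $\beta_i - \beta_{i-1} \in \{0, \pm\gamma_i\}$. In the nontrivial case $\beta_i = \beta_{i-1} \pm \gamma_i$, fix any $t \in Z_{\mp\gamma_i}$; closedness applied to $\beta_i + r\delta$ and $\mp\gamma_i + t\delta$ gives $\beta_{i-1} + (r+t)\delta \in \Psi$, i.e., $r + t \in Z_{\beta_{i-1}}$. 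Since $Z_{\beta_{i-1}}$ lies in a single residue class mod $m$ and $t$ is fixed, so does $Z_{\beta_i}$.

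\emph{Additive inclusion and lift.} With $|\pi_m(Z_\alpha)| = 1$ for every $\alpha$, we verify the hypothesis $Z_\alpha + Z_\gamma \subseteq Z_{\alpha+\gamma}$ of \Cref{existp} on every triple $(\alpha,\gamma,\alpha+\gamma) \in Gr(\Psi)^{\times 3}$. If $\alpha + \gamma$ is short, closedness suffices; if $\alpha + \gamma$ is long, then since in the types under consideration the sum of a short and a long root is never a long root, either both $\alpha, \gamma$ are long (trivial as $Z_\alpha, Z_\gamma \subseteq m\bz$) or both are short. In the short-short case, pick $u \in Z_{\alpha+\gamma} \subseteq m\bz$ and $r \in Z_\alpha$; closedness applied to $(\alpha+\gamma)+u\delta$ and $-\alpha - r\delta$ yields $u - r \in Z_\gamma$, which together with $u \in m\bz$ and the single-residue-class property of $Z_\alpha, Z_\gamma$ forces $p_\alpha + p_\gamma \equiv 0 \pmod m$, hence $r + s \in m\bz$ for every $r \in Z_\alpha, s \in Z_\gamma$. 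Then $\alpha+\gamma+(r+s)\delta \in \Phi$ and closedness completes the inclusion. \Cref{existp} now produces the required $\bz$-linear $p: Gr(\Psi) \to \bz$ with $p_\alpha \in Z_\alpha$ for all $\alpha$.

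\emph{Conclusion.} All hypotheses of \Cref{prop:1} are now satisfied, so its conclusion yields a common subgroup $A \subseteq m\bz$ with $Z_\alpha - p_\alpha = A$ for every $\alpha$. Writing $A = n\bz$ for the appropriate $n \in \bz_+$ gives $\Psi = \{\alpha + (p_\alpha + n\bz)\delta : \alpha \in Gr(\Psi)\}$, and \Cref{prop:1}(2) shows that $\Psi$ is a closed subroot system of $\Phi$. The hard part is the propagation in the first step: one must transport the mod-$m$ single-residue-class property along a reflection chain in which the intervening simple roots $\gamma_i$ may themselves be short, so that the residue structure of $Z_{\gamma_i}$ is a priori uncontrolled. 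The key observation is that only the existence of one element of $Z_{\mp\gamma_i}$ is needed, since a single shift by such an element is enough to carry the property forward.
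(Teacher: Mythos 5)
Your proof is correct, and its overall skeleton coincides with the paper's: establish $|\pi_m(Z_\alpha)|=1$ for every $\alpha\in Gr(\Psi)$, verify the additive condition \eqref{eq:z} so that \Cref{existp} produces the $\bz$-linear lift $p$, and then invoke \Cref{prop:1}. The one place where you genuinely diverge is the propagation step, which is the technical heart of the argument. The paper first transports the single-residue-class property from a short \emph{simple} root to all simple roots by walking along paths in the Dynkin diagram of $Gr(\Psi)$ (using the partial sums $\gamma_j=\sum_{i\le j}\alpha_i$ and \Cref{longroot} to ensure the intermediate roots stay short), and then reaches all positive roots by a second induction on height. You instead use transitivity of $W(Gr(\Psi))$ on the short roots of the irreducible subroot system $Gr(\Psi)$ and transport the property along a single reflection chain $\beta_i=s_{\gamma_i}(\beta_{i-1})$; the key point that only one element $t\in Z_{\mp\gamma_i}$ is needed to shift the residue class is exactly right, and the step is legitimate because $\beta_{i-1}$ is always short, so $\beta_{i-1}+(r+t)\delta$ is automatically a real root. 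Your route collapses the paper's two-stage induction into one pass and, as a small bonus, works directly from an arbitrary short root $\beta$ satisfying the hypothesis, whereas the paper's proof tacitly starts from a short \emph{simple} root; the price is an appeal to Weyl-group transitivity on roots of a fixed length, which the paper already records in Section 2 anyway. The verification of \eqref{eq:z} in the $(s,s,\ell)$ case and the final appeal to \Cref{prop:1} (plus connectivity of $Gr(\Psi)$ to get a single subgroup $n\bz$) match the paper.
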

	\begin{proof}
	Note that $Gr(\Psi)$ must be a closed subroot system of $\mathring{\Phi}.$
	Fix a short simple root $\beta\in Gr(\Psi)$ such that $|\pi_m(Z_\beta)|=1$. We first show that $|\pi_m(Z_\alpha)|=1$ for all simple roots $\alpha$. We prove this by induction on $k$ where $\beta=\alpha_0,\cdots,\alpha_k=\alpha$ is the unique path from $\beta$ to $\alpha$.
	Set $\gamma_j=\sum_{i=0}^{j}\alpha_{i}$ for $0\le j\le k$, and note that  $\gamma_j\in Gr(\Psi)$, for all $0\le j\le k$.
We claim that  $|\pi_m(Z_{\alpha_j})|=1$ and $|\pi_m(Z_{\gamma_j})|=1$, for all $0\le j\le k$. 
	For $k=1$, we have $\alpha$ is adjacent to 
	 $\beta$ and $\alpha+\beta\in Gr(\Psi)$.
Consider $Z_{\alpha+\beta}+Z_{-\alpha}\subseteq Z_\beta$ which is always true as $\beta$ is short. This gives $Z_{\alpha+\beta}$ and
$Z_{-\alpha}$ must contain only one congruence class modulo $m.$ Since $Z_{-\alpha}=-Z_\alpha$, we have $|\pi_m(Z_\gamma)|=1$ for $\gamma=\alpha, \alpha+\beta.$
For general $k$, we must have $\gamma_{k-1}$ is short by \Cref{longroot} since $\beta$ is short. Since $\gamma_k=\gamma_{k-1}+\alpha$, we have $Z_{\gamma_k}+Z_{-\alpha}\subseteq Z_{\gamma_{k-1}}$. By induction hypothesis, we have $|\pi_m(Z_{\gamma_{k-1}})|=1$, and this implies the desired claim.

We shall now show that $|\pi_m(Z_\alpha)|=1$ for every positive root $\alpha.$ We proceed by induction on ht$(\alpha)$.
If $\alpha$ is long or ht$(\alpha)=1$, then there is nothing to prove.
 Assume that ht$(\alpha)>1$, then we can write $\alpha=\gamma+\alpha_k$ where $\gamma\in \mathring{\Phi}^+ $ and $\alpha_k$ is a simple root. 
 If $\alpha_k$ is short, we have $Z_{\gamma}+Z_{-\alpha}\subseteq Z_{-\alpha_k}$ and this immediately implies that $|\pi_m(Z_\alpha)|=1$.
 If $\alpha_k$ is long, then $\gamma$ is short and we have $Z_{\alpha}+Z_{-\alpha_k}\subseteq Z_{\gamma}$.  By induction hypothesis, we have $|\pi_m(Z_\gamma)|=1$, hence we get $|\pi_m(Z_\alpha)|=1$. We now check the condition \eqref{eq:z}
    is satisfied.  Since $\Psi$ is closed, the only non-trivial case is when $\Phi$ is twisted and 
    $(\alpha_1,\alpha_2,\alpha_1+\alpha_2)\in Gr(\Psi)^{\times 3}$ is of type $(s,s,\ell)$.   
  Since $Z_{\alpha_1+\alpha_2}+Z_{-\alpha_1}\subseteq Z_{\alpha_2}$ holds, we must have $r+s\equiv 0\ (\mathrm{mod}\ m)$ for all  $r\in Z_{\alpha_1}$ and $s\in Z_{\alpha_2}$.
  This implies $Z_{\alpha_1}+Z_{\alpha_2}\subseteq Z_{\alpha_1+\alpha_2}$. Now \Cref{prop:1} completes the proof.

	\end{proof}
	
	\begin{cor}\label{coruntwisted}
    Let $\Phi$ be a real untwisted irreducible affine root system.
	Let $\Psi$ be a symmetric closed subset of $\Phi$ such that none of the irreducible components of $Gr(\Psi)$ is of type $A_1$. Then $\Psi$ is a closed subroot system of $\Phi$. 
	\end{cor}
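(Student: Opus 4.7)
The plan is to reduce to the irreducible setting and then run the argument of \Cref{prop:1}, exploiting the fact that in the untwisted setting the hypothesis on $\pi_m$ becomes vacuous because $\Lambda_\alpha = \bz$ for every $\alpha \in \mathring{\Phi}$.

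First I would verify that $Gr(\Psi)$ is closed in $\mathring{\Phi}$: given $\alpha,\beta \in Gr(\Psi)$ with $\alpha+\beta \in \mathring{\Phi}$, pick any $r \in Z_\alpha$, $s \in Z_\beta$; since $\Phi$ is untwisted we have $\alpha+\beta+(r+s)\delta \in \Phi$, and closedness of $\Psi$ forces this into $\Psi$, so $\alpha+\beta \in Gr(\Psi)$ and $r+s \in Z_{\alpha+\beta}$. This simultaneously shows that $Gr(\Psi)$ is a closed subroot system of $\mathring{\Phi}$ (by \Cref{keylem:finite}) and that condition \eqref{eq:z} holds for every triple $(\alpha,\beta,\alpha+\beta) \in Gr(\Psi)^{\times 3}$. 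Then by \Cref{prop:subroot} parts (1), (2) and (4), it suffices to show that each irreducible component of $\Psi$ is a closed subroot system, so I reduce to the case where $\Psi$ itself is irreducible.

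For such $\Psi$, $Gr(\Psi)$ is an irreducible closed subroot system of $\mathring{\Phi}$ with no component of type $A_1$, and \Cref{existp} produces a $\bz$-linear map $p : Gr(\Psi) \to \bz$ with $p_\alpha \in Z_\alpha$ for every $\alpha$. I then follow the argument of \Cref{prop:1} verbatim. Its hypothesis $|\pi_m(Z_\alpha)|=1$ is invoked only inside Part~(1), to force $Z_\alpha + Z_\beta \subseteq Z_{\alpha+\beta}$ in the twisted case when $\alpha+\beta$ is long; in the present untwisted setting this inclusion is already supplied by the previous step. The argument then yields a subgroup $A \subseteq \bz$ (rather than $A \subseteq m\bz$) with $A = Z'_\alpha = Z'_\beta = Z'_{\alpha+\beta}$ for every triple in $Gr(\Psi)^{\times 3}$, and irreducibility together with the no-$A_1$ hypothesis promotes this to a single $A$ shared by all $\alpha \in Gr(\Psi)$. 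The verbatim Part~(2) argument then gives $s_\alpha(\beta) \in \Psi$ for all $\alpha,\beta \in \Psi$: the subcase $\beta' \neq \pm\alpha'$ is handled by the unbroken $\alpha$-string property of real roots of $\Phi$, and the subcase $\beta' = \pm\alpha'$ is handled by the group structure of $A$ together with $Z'_{-\alpha'} = -Z'_{\alpha'} = Z'_{\alpha'}$.

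The main obstacle is essentially bookkeeping: the literal hypothesis $|\pi_m(Z_\alpha)|=1$ in \Cref{prop:1} can genuinely fail in the untwisted case (for example $\Psi = \Phi$ in type $B_n^{(1)}$), so one cannot cite \Cref{prop:1} as a strict black box. Instead one must identify the single place in its proof where the hypothesis is used and observe that the untwisted-ness of $\Phi$ supplies \eqref{eq:z} directly, after which the rest of the argument goes through unchanged and also yields the explicit description $\Psi = \{\alpha+(p_\alpha+n\bz)\delta : \alpha \in Gr(\Psi)\}$ with $n\bz = A$.
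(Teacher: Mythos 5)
Your proof is correct and follows essentially the same route as the paper: the paper simply observes that $m=1$ in the untwisted case (so $\Lambda_\ell=\bz$ and $|\pi_m(Z_\alpha)|=1$ holds automatically for every $\alpha$) and invokes \Cref{prop:equal1}, which in turn reduces to \Cref{existp} and \Cref{prop:1} exactly as you do. Your concern that the hypothesis of \Cref{prop:1} could genuinely fail (e.g.\ for $\Psi=\Phi$ in type $B_n^{(1)}$) only arises if one reads $m$ as the lacing number of $\mathring{\Phi}$ rather than as the integer with $\Lambda_\ell=m\bz$; under the paper's convention the hypothesis is vacuous and the proposition applies as a black box, and your direct verification of $Z_\alpha+Z_\beta\subseteq Z_{\alpha+\beta}$ from untwistedness is an equivalent substitute.
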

	\begin{proof}
	Since $m=1,$ we get the result from \cref{prop:equal1}.
	\end{proof}
	
\begin{rem}
The assumptions on the $Gr(\Psi)$ in \cref{prop:1} and \ref{prop:equal1} and \Cref{coruntwisted} are very sharp, i.e.,  the conclusions of \cref{prop:1} and \ref{prop:equal1} and  \Cref{coruntwisted}  are not valid when one of the components of $Gr(\Psi)$ is of type $A_1$. For example, we can take $$\Psi=\{\alpha+\delta,-\alpha+\delta,-\alpha-\delta,\alpha-\delta\}$$ in $A_1^{(1)}$, it is symmetric closed but not subroot system. 
Take $\alpha_1=\alpha+\delta,\,\beta_1=-\alpha+\delta$, then $s_{\alpha_1}(\beta_1)=\alpha+3\delta\notin \Psi$. 
Note that $Z_\alpha(\Psi)=\{\pm 1\}$ which is far from being a coset. 

\end{rem}

\subsubsection{}
Now we assume that there is a short root $\beta\in Gr(\Psi)$ such that $|\pi_m(Z_\beta)|>1$. In this case, we have to deal with the case $Gr(\Psi)$ is of type $B_2$  separately. First we assume that $Gr(\Psi)$ is not of type $B_2$, then we have:
\begin{prop}\label{prop:notb2}
Let $\Phi$, $\Psi$, $Gr(\Psi)$ be as before in \Cref{prop:equal1}.
Further assume that $Gr(\Psi)$ is not of type $B_2$. Suppose that there exists a short root $\beta\in Gr(\Psi)$ such that $|\pi_m(Z_\beta)|>1$,
	 then we have,
	     \begin{equation}\label{notb2}
	         \Psi=\{\alpha+(p_\alpha+n_s\bz)\delta:\alpha\in Gr(\Psi)_s\}\cup \{\gamma+(p_\gamma+mn_s\bz)\delta:\gamma\in Gr(\Psi)_\ell\}
	     \end{equation}
	for some $n_s\in \bz_+$. In particular $\Psi$ is a closed subroot system.
    \end{prop}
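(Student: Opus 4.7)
The strategy has four phases: (i) propagate the hypothesis $|\pi_m(Z_\beta)|>1$ from one short root to every short root of $Gr(\Psi)$; (ii) construct a $\bz$-linear function $p$ on $Gr(\Psi)$; (iii) exhibit a common subgroup $n_s\bz$ controlling the residues at short roots; (iv) pin down the long-root residues as $mn_s\bz$.

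For phase (i), I would argue by contraposition of \Cref{prop:equal1}: if some short root $\alpha\in Gr(\Psi)$ had $|\pi_m(Z_\alpha)|=1$, that proposition would force $Z_\mu$ to be a single coset of a fixed subgroup $n\bz$ for every $\mu\in Gr(\Psi)$, yielding $|\pi_m(Z_\beta)|=1$ as well and contradicting the hypothesis. For phase (ii), I would verify condition~\eqref{eq:z} so as to apply \Cref{existp}: the only nontrivial case is a triple $(\alpha,\beta,\alpha+\beta)\in Gr(\Psi)^{\times 3}$ of type $(s,s,\ell)$, where closedness combined with $Z_{\alpha+\beta}-Z_\beta\subseteq Z_\alpha$ (valid since $\alpha$ is short) and $Z_{\alpha+\beta}\subseteq m\bz$ yields $Z_\alpha+Z_\beta\subseteq Z_{\alpha+\beta}$. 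This supplies the needed linear $p:Gr(\Psi)\to\bz$ with $p_\alpha\in Z_\alpha$.

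Writing $Z_\mu':=Z_\mu-p_\mu$, the argument of \Cref{prop:1}(1) applied to $(s,s,s)$-triples yields $Z_\alpha'=Z_\beta'=Z_{\alpha+\beta}'$, a common subgroup of $\bz$; applied to $(\ell,\ell,\ell)$-triples it yields a common subgroup of $m\bz$ for the long roots. For $(s,\ell,s)$-triples $(\alpha,\gamma,\alpha+\gamma)$, the inclusions $Z_\alpha+Z_\gamma\subseteq Z_{\alpha+\gamma}$ and $Z_{\alpha+\gamma}-Z_\gamma\subseteq Z_\alpha$ (both valid because $\alpha$ and $\alpha+\gamma$ are short) force $Z_\alpha'=Z_{\alpha+\gamma}'$ and $Z_\gamma'\subseteq Z_\alpha'$. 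A type-by-type inspection over the irreducible types that $Gr(\Psi)$ may carry (all rank $\ge 2$ types other than $B_2$) shows that every such type admits either $(s,s,s)$-triples or both $(\ell,\ell,\ell)$- and $(s,\ell,s)$-triples linking all short roots, so there is a common subgroup $n_s\bz$ with $Z_\alpha'=n_s\bz$ for every short $\alpha\in Gr(\Psi)$ and $Z_\gamma'\subseteq n_s\bz$ for every long $\gamma$. The hypothesis $|\pi_m(Z_\beta)|>1$ amounts to $m\nmid n_s$; since $m\in\{2,3\}$ is prime, $\gcd(m,n_s)=1$, whence $n_s\bz\cap m\bz=mn_s\bz$ and so $Z_\gamma'\subseteq mn_s\bz$. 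For the reverse inclusion, each long $\gamma\in Gr(\Psi)$ admits (again by the type analysis) a decomposition $\gamma=\alpha+\beta$ with $\alpha,\beta$ short in $Gr(\Psi)$; the $(s,s,\ell)$-closure relation then yields $mn_s\bz\subseteq Z_\gamma'$, giving $Z_\gamma'=mn_s\bz$. This establishes~\eqref{notb2}, and the reflection argument of \Cref{prop:1}(2) applied to the resulting explicit coset description shows $\Psi$ is a closed subroot system.

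\textbf{Main obstacle.} The technical heart is the type-by-type check that $Gr(\Psi)$ provides enough $(s,s,s)$-, $(\ell,\ell,\ell)$-, and $(s,\ell,s)$-triples to force both subgroup structure and coset uniformity, together with the existence, for every long $\gamma\in Gr(\Psi)$, of a decomposition $\gamma=\alpha+\beta$ as a sum of short roots of $Gr(\Psi)$. The exclusion of $B_2$ is precisely what makes this case-work possible: in $B_2$ the short roots $\{\pm\epsilon_1,\pm\epsilon_2\}$ admit no $(s,s,s)$-triple, while the long roots $\{\pm(\epsilon_1\pm\epsilon_2)\}$ form $D_2=A_1\oplus A_1$ and hence admit no $(\ell,\ell,\ell)$-triple, so neither branch of the propagation is available and $Z_{\epsilon_i}'$ need not be a subgroup.
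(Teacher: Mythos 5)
Your overall architecture (propagate $|\pi_m(Z_\alpha)|>1$ to all short roots, build a linear $p$, identify $n_s\bz$ and then $mn_s\bz$) mirrors the paper's, but phase (ii) contains a genuine error that the rest of the argument depends on. You claim that for a triple $(\alpha,\beta,\alpha+\beta)$ of type $(s,s,\ell)$ the inclusions $Z_{\alpha+\beta}-Z_\beta\subseteq Z_\alpha$ and $Z_{\alpha+\beta}\subseteq m\bz$ yield $Z_\alpha+Z_\beta\subseteq Z_{\alpha+\beta}$, so that condition \eqref{eq:z} holds and \Cref{existp} applies. This inference is only valid when $\pi_m(Z_\alpha)$ and $\pi_m(Z_\beta)$ are singletons (that is the situation of \Cref{prop:equal1}); under the present hypothesis $|\pi_m(Z_\beta)|>1$ it fails. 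Concretely, take $\Phi=D_4^{(2)}$ and $\Psi=\Phi$, so $Gr(\Psi)=B_3$ and $Z_{\epsilon_1}=Z_{\epsilon_2}=\bz$, $Z_{\epsilon_1+\epsilon_2}=2\bz$: then $Z_{\epsilon_1}+Z_{\epsilon_2}=\bz\not\subseteq 2\bz$. The correct relation is only $(Z_\alpha+Z_\beta)\cap m\bz\subseteq Z_{\alpha+\beta}$ (the paper's \eqref{keyeq2}), because for $r\in Z_\alpha$, $s\in Z_\beta$ with $r+s\not\equiv 0\pmod m$ the element $\alpha+\beta+(r+s)\delta$ is not even a root of $\Phi$, so closedness gives nothing. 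Since \eqref{eq:z} fails, \Cref{existp} does not produce your $\bz$-linear $p$ with $p_\mu\in Z_\mu$, and without that the entire $Z'_\mu$ calculus in phases (iii)--(iv) has no starting point.

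The paper repairs exactly this by passing to the auxiliary set $\Psi_0$ built from $Z_{\gamma,0}:=Z_\gamma\cap m\bz$, for which \eqref{eq:z} does hold, and extracting $p$ with $p_\gamma\in Z_{\gamma,0}\subseteq Z_\gamma$; this requires the separate (and for $D_4^{(3)}$ genuinely nontrivial) verification that $Z_\gamma\cap m\bz\neq\emptyset$ for every short $\gamma$, which your sketch never addresses -- for $m=3$ the hypothesis $|\pi_3(Z_\beta)|>1$ only guarantees two residues, not the residue $0$. A secondary soft spot: your propagation among long roots leans on $(\ell,\ell,\ell)$-triples, but for $\mathring{\Phi}$ of type $C_n$ the long roots $\{\pm 2\epsilon_i\}$ form $nA_1$ and admit no such triples, so that branch of your "type-by-type inspection" would have to be replaced by $(s,\ell,s)$-triples throughout; the paper sidesteps this by getting a uniform $n_\ell$ for all long roots directly from \Cref{prop:1} applied to $\Psi_0$. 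Your diagnosis of why $B_2$ must be excluded is correct, but the proof as proposed does not go through without the $\Psi_0$ device.
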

    
    \begin{proof}
      Clearly $\Phi$ is twisted and we have $|\pi_m(Z_\alpha)|>1$ for all short roots $\alpha\in \Pi$ by the proof of  \Cref{prop:equal1}. Let $Z_{\gamma, 0}=Z_\gamma\cap m\bz,$ for $\gamma\in Gr(\Psi)$ and note that $Z_{\gamma,0}=Z_\gamma$ for long roots $\gamma\in Gr(\Psi)$.  Define $\Psi_0$ as follows
    $$\Psi_0:=\{\gamma+Z_{\gamma, 0}\delta:\gamma\in Gr(\Psi)_s\}\cup \{\gamma+Z_{\gamma,0}\delta:\gamma\in Gr(\Psi)_\ell\}$$
    If $m=2$, clearly $Z_{\gamma, 0}\neq \emptyset$ for all $\gamma\in Gr(\Psi)$. If $m=3$, then $\Phi=D_4^{(3)}$. Since $Gr(\Psi)$ is not of type $A_1$ and it is irreducible, we must have $Gr(\Psi)=\mathring{\Phi}$ (which is of type $G_2$). If $\{\alpha_1,\alpha_2\}$ is a basis of $\mathring{\Phi}$ with $\alpha_1$ is short, then 
    the short roots of $\mathring{\Phi}$ are $\alpha_1, \alpha_2+\alpha_1$ and $\alpha_2+2\alpha_1$.
Since $Z_{\alpha_2}\subseteq m\bz$ and $Z_{\alpha_2}+Z_{\alpha_1}\subseteq Z_{\alpha_2+\alpha_1}$, we have $$\pi_m(Z_{\alpha_1})\subseteq \pi_m(Z_{\alpha_2+\alpha_1}).$$ Let $x, y\in \pi_m(Z_{\alpha_1})$ such that $x\neq y.$
Since $Z_{\alpha_2+\alpha_1}+Z_{\alpha_1}\subseteq Z_{\alpha_2+2\alpha_1}$, we have
$2x, x+y, 2y\in \pi_m(Z_{\alpha_2+2\alpha_1}).$ Since $x\neq y$, we have
$|\pi_m(Z_{\alpha_2+2\alpha_1})| = 3$. Now since 
    $Z_{\alpha_2+2\alpha_1}+Z_{-\alpha_1}\subseteq Z_{\alpha_2+\alpha_1}$, we have
    $|\pi_m(Z_{\alpha_2+\alpha_1})| = 3$.
    Again since $Z_{\alpha_2+\alpha_1}+Z_{-\alpha_2}\subseteq Z_{\alpha_1}$, we must have
     $|\pi_m(Z_{\alpha_1})| = 3$.
This proves that $Z_{\alpha,0}\neq \emptyset$ for $\alpha\in \mathring{\Phi}_s$.   Hence $Z_{\gamma,0}\neq \emptyset$ for all $\gamma\in Gr(\Psi)$.

\medskip   
   Clearly the condition \eqref{eq:z} is satisfied for $\Psi_0$. Hence we have a $\bz$-linear function $p:Gr(\Psi_0)\to \bz$ with $p_\gamma\in Z_{\gamma,0}$ for all $\gamma\in Gr(\Psi_0)=Gr(\Psi)$. All the necessary conditions of \Cref{prop:1} are satisfied for $\Psi_0$. Since $Gr(\Psi)$ is irreducible, there exists $n_\ell\in m\bz$ such that
   $Z'_{\gamma, 0}=n_\ell \bz$ for all $\gamma\in Gr(\Psi)$. In particular, we have
   $Z'_{\gamma}=n_\ell \bz$ for all long roots $\gamma\in Gr(\Psi)$.
   
   \medskip
      Now let $Gr(\Psi)$ be of type $B_n$ with $n\geq 2$, in particular $m=2$. We claim that $Z_\alpha'$ is a union on cosets modulo $n_\ell\bz$ for all $\alpha\in Gr(\Psi)_s$.  Let $\alpha\in Gr(\Psi)$ be a short root. Since $Gr(\Psi)$ is irreducible, there exists a long root $\gamma$ such that $(\alpha,\gamma)\neq 0$.  Without loss of generality let $(\alpha,\gamma)<0$. Since $\Psi$ is closed, we have the following relations 
    $$Z_\gamma+Z_\alpha\subseteq Z_{\alpha+\gamma},\ Z_{\alpha+\gamma}+Z_{-\gamma}\subseteq Z_{\alpha}.$$
Since $p$ is $\bz$-linear, we have 
    $$Z_\gamma'+Z_\alpha'\subseteq Z_{\alpha+\gamma}',\ \ Z_{\alpha+\gamma}'+Z_{-\gamma}'\subseteq Z_{\alpha}'.$$
    Since $Z_\gamma'=n_\ell \bz$, the above relations show that $Z_\alpha'=Z_{\alpha+\gamma}'$ and $Z_\alpha'+n_\ell\bz\subseteq Z_\alpha'$. Thus we have 
    $Z_\alpha'$ is a union of cosets modulo $n_\ell\bz$ for all short roots $\alpha\in Gr(\Psi)$. 
    Now if $\alpha$ and $\gamma$ are short roots such that $\alpha+\gamma$ is long, then we have
    \begin{equation}\label{keyeq2}
    (Z_\alpha'+Z_\gamma')\cap m\bz\subseteq Z_{\alpha+\gamma}',\ \ Z_{\alpha+\gamma}'+Z_{-\gamma}'\subseteq Z_{\alpha}'
    \end{equation}
Fix a short root $\alpha$ and write $Z_\alpha'=\cup_i(a^i_{\alpha}+n_\ell\bz)$ with $0\le a^i_{\alpha}\neq a^j_{\alpha}< n_\ell,\ i\neq j$. Since $Gr(\Psi)$ is of type $B_n\ (n\geq 2)$, there is a short root $\gamma$ such that $\alpha+\gamma$ is long. Equations in \eqref{keyeq2} imply that $n_\ell\bz\subseteq Z_{\alpha}'$ and hence $a^k_\alpha=0$ for some $k$ and $2\nmid a^j_\alpha$ for all $j\neq k$. Since $|\pi_m(Z_\alpha)|>1,$ there exists $i$ such that $a_\alpha^i$ is odd. Let $a_\gamma^r$ be one such odd coset for $\gamma$. If $a^i_\alpha,a^j_\alpha$ are both odd, then by equation \eqref{keyeq2}, we get that $a^i_\alpha+a^r_\gamma\equiv 0\ (\mathrm{mod}\ n_\ell)$ and $a^j_\alpha+a^r_\gamma\equiv 0\ (\mathrm{mod}\ n_\ell)$. Hence it follows that $a^i_\alpha=a^j_\alpha$ and $Z_\alpha'$ is a union of exactly two cosets $n_\ell\bz\cup (a_\alpha+n_\ell\bz)$ with $a_\alpha$ odd. Similarly $Z_\gamma'=n_\ell\bz\cup (a_\gamma+n_\ell\bz)$. Furthermore $a_\alpha+a_\gamma\equiv 0(\mathrm{mod}\ n_\ell)$. Till this point the argument is valid for $B_2$ as well.
    Now assume that $n\geq 3$. Let $\beta_1,\beta_2$ be short roots. Then there is a short root $\beta_3$ such that $\beta_i+\beta_j$ is a long root for $1\le i\neq j\le 3$. Applying equation \eqref{keyeq2} for the triples $(\beta_1,\beta_2,\beta_1+\beta_2),\ (\beta_2,\beta_3,\beta_2+\beta_3),\ (\beta_1,\beta_3,\beta_1+\beta_3)$ successively we get that $$a_{\beta_1}+a_{\beta_2}\equiv 0\ (\mathrm{mod}\ n_\ell),\ \ a_{\beta_2}+a_{\beta_3}\equiv 0\ (\mathrm{mod}\ n_\ell),\ \ a_{\beta_1}+a_{\beta_3}\equiv 0\ (\mathrm{mod}\ n_\ell)$$ 
    Hence we have $a_{\beta_1}=a_{\beta_2}=a_{\beta_3}=n_\ell/2$. This proves that $Z_{\beta_1}'=Z_{\beta_2}'=n_\ell\bz\cup (n_\ell/2+n_\ell\bz)=\frac{n_\ell}{2}\bz.$ Hence if $Gr(\Psi)$ is of type $B_n(n\geq 3),$ then $\Psi$ is of the form \eqref{notb2}.
    
    \medskip
     Now let $Gr(\Psi)$ be not of type $B_n$. Let $\Psi'=\bigcup_{\alpha\in \mathring{\Phi}_s}\{\alpha+r\delta\in \Psi:r\in \bz\}$. Since $Gr(\Psi)_s=Gr(\Psi')$, we have
     $\Psi'$ is symmetric closed subset of $\mathring{\Phi}_s^{(1)}$. Note that $Z_\alpha(\Psi)=Z_\alpha(\Psi')$ for all $\alpha\in Gr(\Psi)_s.$
     Since $Gr(\Psi)_s$ is irreducible, by \Cref{prop:1}, there exists $n_s\in \bz_+$ such that $Z_\alpha'(\Psi)=Z_\gamma'(\Psi)=n_s\bz$ for all $\alpha,\gamma\in Gr(\Psi)_s$.
     Note that given $\alpha\in Gr(\Psi)_s$, there exists a $\gamma\in Gr(\Psi)_s$ such that $\alpha+\gamma\in Gr(\Psi)_\ell$.  Applying equation \eqref{keyeq2}, we get that $n_\ell\bz\subseteq n_s\bz$ and $mn_s\bz=n_s\bz\cap m\bz\subseteq n_\ell\bz$. We have $n_s\mid n_\ell$ and $n_\ell\mathrel{|} mn_s$. Let $n_\ell=rn_s$. Then $r\mid m$. Since $m$ is prime, either $r=1$ or $r=m$. If $r=1,$ then $n_\ell=n_s$, and since $m\mid n_\ell$, we have that $|\pi_m(Z_\alpha)|=1$ for all short roots $\alpha$ which is a contradiction. 
     Hence $r=m$ and consequently, $\Psi$ is of the form \eqref{notb2}. This completes the proof.

    \end{proof}

    \begin{prop}\label{prop:b2}
    Let $\Phi$, $\Psi$, $Gr(\Psi)$ as before in \Cref{prop:equal1}.
   Assume that there exists a short root $\beta\in Gr(\Psi)$ such that $|\pi_m( Z_\beta)|>1$. Suppose $Gr(\Psi)$ is of type $B_2$, then we have $\Psi=\Psi^+\cup (-\Psi^+)$ where $\Psi^+$ is of the form 
	 \begin{equation}\label{b2}
 \Psi^+=\{\epsilon_i+ (p_{\epsilon_i}+A_i)\delta: i=1, 2\}\cup\{\alpha+(p_\alpha+n_\ell\bz)\delta:\alpha\in \mathring{\Phi}^+_\ell\}
    \end{equation}
    where 
    \begin{enumerate}
        \item $p:Gr(\Psi)\to \bz$ is a $\bz$-linear function such that $p_{\epsilon_i}\in 2\bz$ for $i=1,2$,
        \item $A_i=n_\ell\bz \cup (a_i+n_\ell\bz),$
       $n_\ell\in 2\bz_+$, $0\le a_1,a_2< n_\ell$ such that $a_1+a_2\equiv 0\ (\mathrm{mod}\ n_\ell)$ and both $a_1, a_2$ are odd.
    \end{enumerate}
      Moreover, we have $\Psi$ is a closed subroot system if and only if $a_1=a_2=n_\ell/2$. In this case $\Psi$ is of the form
      \begin{equation}\label{b2subroot}
          \Psi=\{\alpha+(p_\alpha+n_s\bz)\delta:\alpha\in Gr(\Psi)_s\}\cup \{\beta+(p_\beta+mn_s\bz)\delta:\beta\in Gr(\Psi)_\ell\}
      \end{equation}
    \end{prop}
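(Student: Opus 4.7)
The plan is to reuse the analysis of \Cref{prop:notb2} for the structural part and then pin down the subroot-system characterisation with one short reflection computation.

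First I would run the proof of \Cref{prop:notb2} verbatim up to the phrase ``Till this point the argument is valid for $B_2$ as well.'' This yields a $\bz$-linear function $p\colon Gr(\Psi)\to\bz$ with $p_{\epsilon_i}\in 2\bz$, an even positive integer $n_\ell$, and the descriptions $Z'_{\epsilon_i}=n_\ell\bz\cup(a_i+n_\ell\bz)$ with $a_i$ odd and $0<a_i<n_\ell$, together with $Z'_\gamma=n_\ell\bz$ for each long $\gamma\in Gr(\Psi)$. Applying the twisted closure constraint
$$(Z'_{\epsilon_1}+Z'_{\epsilon_2})\cap 2\bz\subseteq Z'_{\epsilon_1+\epsilon_2}=n_\ell\bz$$
for the triple $(\epsilon_1,\epsilon_2,\epsilon_1+\epsilon_2)$ then forces $a_1+a_2\equiv 0\pmod{n_\ell}$, which assembles into the form \eqref{b2} for $\Psi^+$ together with its negative.

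Next, for the ``moreover'' part, assume that $\Psi$ is a closed subroot system, and consider the elements $\alpha:=\epsilon_1+(p_{\epsilon_1}+a_1)\delta$ and $\beta:=-\epsilon_1-p_{\epsilon_1}\delta$, both of which lie in $\Psi$. Since $\langle\beta,\alpha^\vee\rangle=-2$, one computes
$$s_\alpha(\beta)=\beta+2\alpha=\epsilon_1+(p_{\epsilon_1}+2a_1)\delta,$$
so $s_\alpha(\beta)\in\Psi$ is equivalent to $2a_1\in A_1=n_\ell\bz\cup(a_1+n_\ell\bz)$. The branch $2a_1\equiv a_1\pmod{n_\ell}$ is excluded by $0<a_1<n_\ell$, hence $2a_1\equiv 0\pmod{n_\ell}$; together with $a_1$ odd and $0<a_1<n_\ell$ this forces $a_1=n_\ell/2$, and swapping the roles of $\epsilon_1$ and $\epsilon_2$ yields $a_2=n_\ell/2$. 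Conversely, when $a_1=a_2=n_\ell/2$ one has $A_i=(n_\ell/2)\bz$, so setting $n_s:=n_\ell/2$ puts $\Psi$ into the form \eqref{b2subroot}; since $n_s$ is odd, the identities $(n_s\bz+n_s\bz)\cap 2\bz=2n_s\bz$ and $n_s\bz+2n_s\bz=n_s\bz$ take care of closedness, while a short case-by-case check of $s_\alpha(\beta)$ (modelled on Part~(2) of the proof of \Cref{prop:1}, valid here because $n_s\bz$ is a subgroup of $\bz$) verifies the subroot property.

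The main obstacle is the forward direction of the ``moreover'' part: reflections that pair two elements from the same coset of $n_\ell\bz$ produce tautologies, so the crux is to locate a reflection mixing the even coset $n_\ell\bz$ with the odd coset $a_i+n_\ell\bz$ across opposite-sign short roots; the pair $(\alpha,\beta)$ above is the cleanest such choice and directly extracts the congruence $2a_1\equiv 0\pmod{n_\ell}$.
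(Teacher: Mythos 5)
Your proposal is correct and follows essentially the same route as the paper: both reuse the proof of \Cref{prop:notb2} up to the point where it is noted to be valid for $B_2$, and both extract the congruence $2a_i\equiv 0\ (\mathrm{mod}\ n_\ell)$ from a single reflection pairing the even coset $n_\ell\bz$ with the odd coset $a_i+n_\ell\bz$ (the paper reflects $\epsilon_i+(p_{\epsilon_i}+a_i)\delta$ in $\epsilon_i+p_{\epsilon_i}\delta$ and lands in $Z_{-\epsilon_i}$, which is the mirror image of your computation landing in $Z_{\epsilon_1}$). Your explicit verification of closedness and the subroot property in the converse direction is slightly more detailed than the paper, which simply observes that $a_1=a_2=n_\ell/2$ puts $\Psi$ in the form \eqref{notb2} already known to be a closed subroot system.
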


    \begin{proof}
       Note that in this case, from the proof of \Cref{prop:notb2},  $Gr(\Psi)$ is of the form \eqref{b2}. If $a_1\equiv a_2(\mathrm{mod}\ n_\ell),$ then along with $a_1+a_2\equiv 0(\mathrm{mod}\ n_\ell)$, we get that $a_1=a_2=n_\ell/2$. Hence $(n_\ell\bz)\cup (a_2+n_\ell\bz)=(n_\ell/2)\bz$ and hence $\Psi$ is of the form \eqref{notb2}. In particular, $\Psi$ is a closed subroot system.
    
    Conversely, suppose that $Gr(\Psi)$ is of type $B_2$ and $\Psi$ given in \eqref{b2} is a closed subroot system of $\Phi$. Let $\alpha=\epsilon_i+(p_{\epsilon_i}+a_i)\delta, \ \alpha'=\epsilon_i+p_{\epsilon_i}\delta$. Then $s_{\alpha'}(\alpha)=-\epsilon_i+(p_{-\epsilon_i}+a_i)\delta$. Since $p_{-\epsilon_i}+a_i$ is odd and $p_{-\epsilon_i}+(-a_i)+n_\ell\bz$ is the coset in $Z_{-\epsilon_i}$ which contains odd integers, we must have $a_i\equiv -a_i(\mathrm{mod}\ n_\ell)$. Hence $a_i=n_\ell/2$ for $i=1,2.$
    
    \end{proof}

    \section{Semi-closed gradient case}\label{semiclosedsect}
As before, we assume that $\Phi$ is a real twisted affine root system which is not of type $A^{(2)}_{2n}$. In this section,
we consider the case when $\Psi$ is a symmetric closed subset of $\Phi$ with $Gr(\Psi)$ is semi-closed in $\mathring{\Phi}.$ We cannot assume that $\Psi$ is irreducible as in \cref{closedcase}, because we may miss the interaction between two roots coming from the different orthogonal components of \eqref{decompgrpsi} (see \Cref{remark1}). Moreover, we have to deal with each individual affine root system separately due to their gradient behaviour. 
    
    \subsection{Twisted real affine root system of type \texorpdfstring{$D_{n+1}^{(2)}$}{D(2)n+1}}\label{Dn+1}  In this case, the real affine roots are given by
$$\Phi=\{\pm \epsilon_i\,+r\,\delta\,\mid r\in\mathbb{Z},1\le i\le n\}\cup\,\{\pm\epsilon_i\pm\epsilon_j\,+2r\delta\,\mid r\in \mathbb{Z},\,1\le i\neq j\le n\}$$
and $\mathring{\Phi}$ is of type $B_n, n\ge 2$. We assume that $\Psi$ is a symmetric closed subset of $\Phi$ such that $Gr(\Psi)$ is semi-closed in $\mathring{\Phi}$. For $J\subseteq I_n$, we define $$B_{J}:=\{\pm\epsilon_{j}:j\in J\}\cup \{\pm\epsilon_j\pm \epsilon_{j'}: j\neq j'\in J\}.$$ 

\noindent
Recall that by definition, a semi-closed subset is symmetric.
First we will determine the semi-closed subsets of $\mathring{\Phi}$ which occur as the gradient of some symmetric closed subset of $D_{n+1}^{(2)}$. Maximal semi-closed subroot systems of $\mathring{\Phi}$ were determined in \cite{DV21}.  The following result generalizes \cite[Theorem 2(2)]{DV21} for $B_n$ type.

\begin{lem}\label{semiclosedD}
Let $\Phi$ be of type $D_{n+1}^{(2)}$ and $\Psi$ a symmetric closed subset of $\Phi$.  Suppose that $\mathring{
\Psi}$ is a semi-closed subset of $\mathring{\Phi}$ such that $\mathring{\Psi}=Gr(\Psi)$, then there exist non-empty subsets $J_o$ and $J_e$ of $I_n$ such that $J_e\cap J_o=\emptyset$ and $\mathring{\Psi}\backslash (B_{J_e}\cup B_{J_o})$ is a closed subroot system of long roots of $B_n$.
\end{lem}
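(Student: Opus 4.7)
\medskip
\noindent
\textbf{Proof proposal for \Cref{semiclosedD}.}

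The plan is to partition the indices $k \in I_n$ with $\pm\epsilon_k \in Gr(\Psi)$ into two classes according to the parity of the elements of $Z_{\epsilon_k}(\Psi)$, namely
\[
J_e := \{k \in I_n : \pm\epsilon_k \in Gr(\Psi),\ Z_{\epsilon_k}(\Psi)\subseteq 2\bz\},\quad
J_o := \{k \in I_n : \pm\epsilon_k \in Gr(\Psi),\ Z_{\epsilon_k}(\Psi)\subseteq 2\bz+1\}.
\]
Clearly $J_e\cap J_o=\emptyset$. The substance of the proof is to show (a) that both $J_e$ and $J_o$ are non-empty, (b) that every short-supporting index lies in $J_e\cup J_o$, and (c) that what is left after removing $B_{J_e}\cup B_{J_o}$ contains only long roots and is closed.

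\medskip
First, since $\mathring{\Psi}=Gr(\Psi)$ is semi-closed but not closed, there is a triple $(\alpha,\beta,\alpha+\beta)\in Gr(\Psi)\times Gr(\Psi)\times (\mathring{\Phi}\setminus Gr(\Psi))$ of type $(s,s,\ell)$; up to signs we may take $\alpha=\epsilon_i$, $\beta=\epsilon_j$ with $i\ne j$ and $\epsilon_i+\epsilon_j\notin Gr(\Psi)$. For every $r\in Z_{\epsilon_i}(\Psi)$ and $s\in Z_{\epsilon_j}(\Psi)$, if $r+s$ were even then $\epsilon_i+\epsilon_j+(r+s)\delta\in\Phi$, so closedness of $\Psi$ would force this root into $\Psi$ and hence $\epsilon_i+\epsilon_j\in Gr(\Psi)$, a contradiction. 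Thus $r+s$ is always odd, which means $Z_{\epsilon_i}(\Psi)$ and $Z_{\epsilon_j}(\Psi)$ are contained in cosets of opposite parity. Relabeling if needed, $i\in J_e$ and $j\in J_o$, so both sets are non-empty.

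\medskip
Next I would show that if $k$ is any index with $\pm\epsilon_k\in Gr(\Psi)$, then $Z_{\epsilon_k}(\Psi)$ has only one parity. Assume for contradiction that $Z_{\epsilon_k}(\Psi)$ contains both an even integer $r_0$ and an odd integer $r_1$. Adding $\epsilon_k+r_0\delta$ to $\epsilon_i+s'\delta$ (with $s'\in Z_{\epsilon_i}\subseteq 2\bz$) gives an element of $\Phi$ (even $\delta$-coefficient), hence of $\Psi$ by closedness, so $\epsilon_k+\epsilon_i+t_1\delta\in\Psi$ with $t_1$ even. Similarly, adding $\epsilon_k+r_1\delta$ to $-\epsilon_j+s\delta$ with $s\in Z_{-\epsilon_j}\subseteq 2\bz+1$ yields $\epsilon_k-\epsilon_j+t_3\delta\in\Psi$ with $t_3$ even. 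Using the symmetry of $\Psi$ to negate the second element and add:
\[
(\epsilon_k+\epsilon_i+t_1\delta)+(-\epsilon_k+\epsilon_j-t_3\delta)=\epsilon_i+\epsilon_j+(t_1-t_3)\delta,
\]
which lies in $\Phi$ since $t_1-t_3$ is even, and therefore in $\Psi$ by closedness. This contradicts $\epsilon_i+\epsilon_j\notin Gr(\Psi)$. Hence $k\in J_e\cup J_o$, and every short root of $\mathring{\Psi}$ is absorbed into $B_{J_e}\cup B_{J_o}$. This mixed-parity contradiction is the main technical step.

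\medskip
It then remains to check that $C:=\mathring{\Psi}\setminus(B_{J_e}\cup B_{J_o})$ consists of long roots only, and is a closed subroot system. For a long root $\pm\epsilon_k\pm\epsilon_\ell\in Gr(\Psi)$ with $k\in J_e$, I can argue as in the previous paragraph that $\ell\in J_e$ as well: pick $\epsilon_k+\epsilon_\ell+t\delta\in\Psi$ (with $t$ even since long roots have $\delta$-coefficient in $2\bz$), and $\epsilon_k+a\delta\in\Psi$ with $a\in Z_{\epsilon_k}\subseteq 2\bz$; then $(\epsilon_k+\epsilon_\ell+t\delta)+(-\epsilon_k-a\delta)=\epsilon_\ell+(t-a)\delta\in\Psi$, forcing $\ell\in J_e$ (and analogously for $J_o$). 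Consequently a long root of $\mathring{\Psi}$ meeting $J_e\cup J_o$ in either index lies entirely in $B_{J_e}\cup B_{J_o}$, and $C$ consists only of long roots supported outside $J_e\cup J_o$. Since semi-closedness is only violated on triples of type $(s,s,\ell)$, $C$ is closed in $\mathring{\Phi}$; being symmetric it is a closed subroot system by \Cref{keylem:finite} applied to the $D$-type subsystem of long roots of $B_n$, completing the proof.
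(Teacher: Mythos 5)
Your proof is correct and follows essentially the same route as the paper: partition the short-root indices by the parity of $Z_{\epsilon_k}$, rule out mixed parity via the same closedness computation inside $\Psi$, and finish by applying \Cref{keylem:finite} to the long-root remainder. The only difference is that you spell out a detail the paper leaves as ``clear,'' namely that a long root of $\mathring{\Psi}$ with one index in $J_e\cup J_o$ forces the other index into the same class.
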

\begin{proof}
Assume that $\mathring{\Psi}=Gr(\Psi)$ for some symmetric closed subset $\Psi$ of $D_{n+1}^{(2)}$. 	Set $Z_p=Z_{\epsilon_p}$ for each $p\in I$ where $$I:=\{p\in I_n\mid \epsilon_p\in \mathring{\Psi}\}.$$ Note that $\epsilon_p+\epsilon_q\in \mathring{\Psi}$ for $p, q\in I$ if and only if $(Z_p+Z_q)\cap 2\bz\neq \emptyset$.
Since $Z_{-\epsilon_p}=-Z_p$, we have $\epsilon_p+\epsilon_q\in \mathring{\Psi}$ for $p, q\in I$ if and only if $\epsilon_p-\epsilon_q\in \mathring{\Psi}$ for $p, q\in I$.
Since $\mathring{\Psi}$ is semi-closed, there exist $\epsilon_i,\epsilon_j\in \mathring{\Psi}$ such that $\epsilon_i+\epsilon_j\notin \mathring{\Psi}$, this implies $I\neq \emptyset$ and
 $(Z_i+Z_j)\cap 2\bz=\emptyset$. Without loss of generality, assume that $Z_i\subseteq 2\mathbb{Z}$ and $Z_j\subseteq 2\mathbb{Z}+1$. 
 We claim that for any $k\in I$, we have either $Z_k\subseteq 2\bz$ or $Z_k\subseteq 2\bz+1$. Suppose that for some $k\in I$, $Z_k$ contains both even and odd elements. Then $\exists \,r,s\in\bz$ such that $\ep_k+2r\delta,\ep_k+(2s+1)\delta\in\Psi$. Let $2q\in Z_i$ and $2p+1\in Z_j$ be arbitrary. Then we have
 $\ep_k+\ep_i+2(r+q)\delta, \ep_k-\ep_j+2(s-p)\delta\in \Psi$ and
		$$\ep_k+\ep_i+2(r+q)\delta-(\ep_k-\ep_j+2(s-p)\delta)= \ep_i+\ep_j+2(r+q+p-s)\delta\in \Psi$$ as $\Psi$ is closed in $\Phi$. This implies 
		$\ep_i+\ep_j\in \mathring{\Psi}$, a contradiction. Hence the claim.
		
		Now define  $J_e:=\{ p\in I\,\mid Z_p\subseteq 2\bz\}$ and $J_o:=I\backslash J_e$.
		We claim that $(s, t), (t, s)\in J_e\times J_o$ implies that $\pm\ep_s\pm \ep_t\notin \mathring{\Psi}$.
		Suppose that $\pm \ep_s\pm \ep_t+2k\delta\in \Psi$ with $(s, t)\in J_e\times J_o$. Then we have $$\pm\ep_t+2(k+r^{\mp})\delta\,=(\pm \ep_s\pm \ep_t+2k\delta)+(\mp \ep_s+2r^{\mp}\delta)\in \Psi,$$
		where $r^{\mp}\in\bz$ such that  $\mp \ep_s+2r^{\mp}\delta\in \Psi$. This is a contradiction since $t\in J_o$. By symmetry we get $(t, s)\in J_e\times J_o$ implies that $\pm\ep_s\pm \ep_t\notin \mathring{\Psi}$.
		Note that $\pm\epsilon_i\pm \epsilon_j\in \mathring{\Psi}$ for all $i,j\in J_e\ $(or $J_o$). Hence $\{\pm \epsilon_i: i\in I\}$ generates a root system of type $B_{J_e}\sqcup B_{J_o}$ in $\mathring{\Psi}$. If $\mathring{\Psi}=B_{J_e}\sqcup B_{J_o}$, then there is nothing to prove.  So assume that 
		 $\mathring{\Psi}\neq B_{J_e}\sqcup B_{J_o}$. It is clear that $\mathring{\Psi}\backslash (B_{J_e}\sqcup B_{J_o})$ is a symmetric closed subset of $(B_n)_\ell$ since 
		 $\mathring{\Psi}$ is semi-closed in $\mathring{\Phi}.$ Hence it is a closed subroot system of $(B_n)_\ell$ by \Cref{keylem:finite}. 
		 \end{proof}
		 
\begin{rem}
\begin{enumerate}
    \item Since the long root of $B_n$ forms a root system of type $D_n$,
		 it follows that $\mathring{\Psi}\backslash (B_{J_e}\sqcup B_{J_o})$ is a union of root systems of type $A$ and $D$ (\cite{BdS}).
		 
		 \item The converse of \Cref{semiclosedD} is also true.
		 
\end{enumerate}

\end{rem}

 \begin{prop}\label{propdn+1}
	Let $\Psi$ be a symmetric closed subset of $\Phi$ such that $Gr(\Psi)$ is semi-closed. If \eqref{decomppsi} and \eqref{decompgrpsi} are the decomposition of $\Psi$ and $Gr(\Psi)$ respectively into irreducible components, then for each $i,$ there exists a $\bz$-linear function $p_i:Gr(\Psi_i)\to \bz,\ \alpha\mapsto p_i(\alpha)\in Z_\alpha,$ and 
	 $n_i\in 2\bz_+$ such that $$\Psi_i=\{\alpha+(p_i(\alpha)+n_i\bz)\delta:\alpha\in Gr(\Psi_i)\}.$$
	In particular, $\Psi$ is a closed subroot system of $\Phi.$ Moreover, we have $r\ge 2$, and $Gr(\Psi_1)$, $Gr(\Psi_2)$ are of type $B$, and the rest of $Gr(\Psi_i)$'s are of either type  $A$ or $D$, and $p_1(\alpha)\in 2\bz+1\,(\text{resp.}\ p_2(\alpha)\in 2\bz)$ for a short root $\alpha\in Gr(\Psi_1)\,(\text{resp.}\ Gr(\Psi_2)).$

	\end{prop}
	\begin{proof}
Since $Gr(\Psi)$ is semi-closed, by \Cref{semiclosedD}, we have $Gr(\Psi)= \mathring{\Psi}_1\sqcup \mathring{\Psi}_2\sqcup \cdots \sqcup \mathring{\Psi}_r$ where 
$\mathring{\Psi}_1$ and $\mathring{\Psi}_2$ are of type $B.$ For $1\le i\le r,$ let $\Psi_i:=\{\alpha+r\delta\in \Psi\mid \alpha\in \mathring{\Psi}_i\}.$ Then $\Psi=\Psi_1\sqcup \cdots \sqcup \Psi_r$ is the decomposition of $\Psi$ into irreducible components. It is easy to see that each $\Psi_i$ is a symmetric closed subset of $\Phi$ with $Gr(\Psi_i)$ is closed in $\mathring{\Phi}$. From the proof of \cref{semiclosedD}, we have $|\pi_m(Z_\alpha)|=1$ for all $\alpha\in Gr(\Psi).$ Now \cref{prop:equal1} gives the desired result for the first part of the proof. Again the second part is clear from \cref{semiclosedD}.
    \end{proof}
 \subsection{Twisted real affine root system of type \texorpdfstring{$A_{2n-1}^{(2)}$}{A{2n-1}(2)}}  In this case, the real affine roots are given by
		$$\Phi=\{\pm 2\epsilon_i+2r\delta\,\mid r\in\bz,\,1\le i\le n\}\cup \{\pm \epsilon_i\pm\ep_j+r\delta,\mid r\in\bz,\,1\le i\neq j\le n\}.$$
	The next proposition is the main result of this subsection.

	\begin{prop}\label{a22n-1}
		Let $\Psi$ be a symmetric closed subset of $\Phi$ such that $Gr(\Psi)$ is semi-closed in $\mathring{\Phi}$. 
		Let $\Psi=\Psi_1\sqcup \cdots \sqcup \Psi_r$ be the decomposition of $\Psi$ into irreducible components. 
		Assume that $Gr(\Psi_i)$ is not of type $A_1$ or $B_2$ for each $1\le i\le r.$
		Then we have $\Psi$ is a closed subroot system.

	\end{prop}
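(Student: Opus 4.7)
The plan is first to reduce to irreducible components. Because distinct components $\Psi_i,\Psi_j$ are mutually orthogonal, any reflection $s_\alpha$ with $\alpha\in \Psi_i$ fixes every $\beta\in \Psi_j$ pointwise, so $\Psi$ is a closed subroot system provided each $\Psi_i$ is. Fix $i$ and set $\mathring{\Psi}:=Gr(\Psi_i)$; by hypothesis $\mathring{\Psi}$ is irreducible and not of type $A_1$ or $B_2$. I split into two main cases according to whether $\mathring{\Psi}$ is closed or semi-closed in $\mathring{\Phi}=C_n$.

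If $\mathring{\Psi}$ is closed, it must be of type $A_k$ ($k\ge 2$) or $C_k$ ($k\ge 3$, the exclusion of type $B_2$ removing $C_2\cong B_2$), as these are the only irreducible closed subroot systems of $C_n$. I then invoke \Cref{prop:equal1} when some short root $\beta\in\mathring{\Psi}$ satisfies $|\pi_m(Z_\beta)|=1$, and \Cref{prop:notb2} (whose branch for $\mathring{\Phi}$ not of type $B_n$ covers our $C_n$) in the complementary case. Both conclusions give that $\Psi_i$ is a closed subroot system.

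Suppose instead $\mathring{\Psi}$ is semi-closed. The short part $\mathring{\Psi}^s$ is closed in the $D_n$-subsystem of short roots (short sums that remain short cannot be of type $(s,s,\ell)$, hence are captured by closedness of $\Psi$), so it is a closed subroot system of $D_n$ by \Cref{keylem:finite}; moreover it cannot be of type $A$, else no $(s,s,\ell)$ sum would exist and $\mathring{\Psi}$ would be closed. Hence $\mathring{\Psi}^s=D_J$ for some $J\subseteq I_n$, and the long part has the form $\mathring{\Psi}^\ell=\{\pm 2\epsilon_j:j\in J_\ell\}$ with $J_\ell\subsetneq J$. When $J_\ell=\emptyset$, irreducibility forces $|J|\ge 3$; since the short real roots of $A_{2n-1}^{(2)}$ coincide with the real roots of the untwisted affine root system $D_n^{(1)}$, the set $\Psi_i$ is a symmetric closed subset of $D_n^{(1)}$ whose gradient $D_J$ is irreducible and not of type $A_1$, and \Cref{coruntwisted} yields that $\Psi_i$ is a closed subroot system.

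The main obstacle is the remaining sub-case $J_\ell\ne\emptyset$, which I rule out via a parity argument. Pick $a\in J_\ell$ and $b\in J\setminus J_\ell$. Since $2\epsilon_b\notin\mathring{\Psi}$ while $\pm\epsilon_a+\epsilon_b\in\mathring{\Psi}$, the closedness of $\Psi$ together with $\Lambda_\ell=2\bz$ forces $(Z_{\epsilon_a+\epsilon_b}+Z_{-\epsilon_a+\epsilon_b})\cap 2\bz=\emptyset$; this pins $Z_{\epsilon_a+\epsilon_b}$ and $Z_{-\epsilon_a+\epsilon_b}$ to constant opposite parities $\tau$ and $1-\tau$ modulo $2$. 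On the other hand, $Z_{2\epsilon_a}$ is a non-empty subset of $2\bz$, and the short sum $(2\epsilon_a)+(-\epsilon_a+\epsilon_b)=\epsilon_a+\epsilon_b$ (with $\Lambda_s=\bz$ imposing no parity constraint) gives, via closedness, $Z_{2\epsilon_a}+Z_{-\epsilon_a+\epsilon_b}\subseteq Z_{\epsilon_a+\epsilon_b}$, inserting elements of parity $1-\tau$ into $Z_{\epsilon_a+\epsilon_b}$ and contradicting its constant parity $\tau$. Therefore $J_\ell=\emptyset$, and in every case $\Psi_i$ is a closed subroot system.
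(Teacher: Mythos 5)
Your overall strategy is sound and, despite the different packaging, runs on the same engine as the paper's proof: the paper does not pass to irreducible components but instead sets $I=\{i: 2\epsilon_i\in Gr(\Psi)\}$ and shows directly that no $\epsilon_k\pm\epsilon_\ell$ with $k\in I$, $\ell\notin I$ can lie in $Gr(\Psi)$ --- by subtracting $2\epsilon_k+2p\delta$ twice, which is your parity computation in additive form --- and then splits $\Psi$ into the part supported on $I$ (closed gradient, handled by the results of Section \ref{closedsect}) and the part supported on $I_n\setminus I$ (short roots only, an untwisted $D^{(1)}$ situation). Your componentwise reduction is legitimate: each $\Psi_i$ is symmetric closed by \Cref{prop:subroot}, its gradient inherits closedness or semi-closedness, and orthogonality of distinct components makes the subroot-system property a componentwise statement, so the closed-gradient components are indeed disposed of by \Cref{prop:equal1} and \Cref{prop:notb2}.

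The one step you must tighten is the assertion that $\mathring{\Psi}^s=D_J$. First, $\mathring{\Psi}^s$ need not be irreducible, so ``not of type $A$'' does not by itself force it to be a single $D_J$; second, ``type $A$'' is ambiguous here since $D_{\{i,j,k\}}\cong A_3$ abstractly --- what matters is whether the set contains both $\epsilon_i+\epsilon_j$ and $\epsilon_i-\epsilon_j$ for some pair, not its abstract isomorphism type. Fortunately your parity argument does not need $\mathring{\Psi}^s=D_J$ in full: it only needs one index $a$ with $2\epsilon_a\in\mathring{\Psi}$ and one $b$ with $\epsilon_a+\epsilon_b,\ \epsilon_a-\epsilon_b\in\mathring{\Psi}$ and $2\epsilon_b\notin\mathring{\Psi}$. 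You can get this directly: if $a\in J_\ell$ then irreducibility and the exclusion of type $A_1$ give some $\epsilon_a\pm\epsilon_b\in\mathring{\Psi}$, and both signs occur because an $(\ell,s,s)$ failure is forbidden for a semi-closed gradient (consider $2\epsilon_a+(-(\epsilon_a\pm\epsilon_b))=\epsilon_a\mp\epsilon_b$); and if every such $b$ again lay in $J_\ell$, the roots supported on $J_\ell$ would be orthogonal to the rest of $\mathring{\Psi}$, so irreducibility would force $\mathrm{supp}(\mathring{\Psi})\subseteq J_\ell$, whence every $(s,s,\ell)$ sum $2\epsilon_i$ would lie in $\mathring{\Psi}$, contradicting semi-closedness. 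With that patch your argument is complete and equivalent to the paper's.
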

	
	\begin{proof}
		
		Define $I:=\{i\in I_n\mid 2\epsilon_i\in Gr(\Psi)\}$. Since $Gr(\Psi)$ is semi-closed, there exist $\epsilon_s-\epsilon_t,\epsilon_s+\epsilon_t\in Gr(\Psi)$ but $2\epsilon_s\notin Gr(\Psi)$. Hence $I\neq I_n$. Let 
		$$\Psi':=\left(\bigcup_{i\in I} (\pm2\epsilon_i+2\bz\delta) \cup \bigcup_{i\neq j \in I} (\pm\ep_i\pm\ep_j+\bz\delta) \right)\cap \Psi,\ \ \Psi'':=\Psi\backslash \Psi'.$$
	    If $\epsilon_k\pm \epsilon_\ell\in Gr(\Psi)$ for some $k\in I$, but $\ell\in I_n\backslash I$. Then we have 
	    $2\ep_k+2p\delta, \ep_k\pm\ep_\ell+r\delta \in \Psi$ for some $p, r\in \mathbb{Z}.$ This implies that
	    $$\ep_k\mp \ep_\ell+(2p-r)\delta=\,(2\ep_k+2p\delta)-(\ep_k\pm\ep_\ell+r\delta)\in\Psi,$$ and hence  $\mp2\ep_\ell+2(p-r)\delta=(\ep_k\mp \ep_\ell+(2p-r))-(\ep_k\pm\ep_\ell+r\delta)\in\Psi$, which is a contradiction as $\ell\notin I$. So if $\epsilon_i\pm \epsilon_j\in Gr(\Psi)$, then either both $i,j\in I$ or both $i,j\notin I.$ In particular, we have
	    $$\Psi''= \left( \bigcup_{i\neq j \in  I_n\backslash I} (\pm\ep_i\pm\ep_j+\bz\delta)\right)\cap \Psi$$ is a closed subroot system of $\Phi$,
	    and $\Psi=\Psi'\sqcup \Psi''$ is an orthogonal decomposition.  
	    
	    
	    \medskip
	    It is easy to see that $\Psi'$ is a symmetric closed subset of $\Phi$ with $Gr(\Psi')$ is a symmetric closed subset of $\mathring{\Phi}.$ 
	 Then by \cref{prop:notb2},  $\Psi'$ is also a closed subroot systems of $\Phi$, and hence $\Psi$ is a closed subroot system of $\Phi$. 
	\end{proof}
\begin{rem}
Let $\Psi_+=(2\epsilon_1+4\bz\delta)\cup (2\epsilon_2+4\bz\delta)\cup (\ep_1\pm\ep_2+ (4\bz\cup \pm 1+4\bz)\delta)\cup \{\ep_i+\ep_j+ (1+4\bz)\delta,\ep_i-\ep_j+ 4\bz\delta: 3\le i\neq j\le 5\}$
and set $\Psi=\Psi_+\cup -\Psi_+$. Then $\Psi$ is not a closed subroot system. So the condition that any irreducible component of $Gr(\Psi)$ is not of type $A_1$ or $B_2$ cannot be dropped from \cref{a22n-1}. 
\end{rem}	
	
\subsection{Twisted real affine root system of type \texorpdfstring{$D_{4}^{(3)}$}{D{4}(3)}}
	In this case, the real roots are given by
		$$\Phi=\{\pm \,(\epsilon_i-\epsilon_j)+r\delta,\,\pm\,(2\epsilon_i-\epsilon_j-\ep_k)+3r\delta\,\mid i,j,k\in I_3, i\neq j,\,r\in\bz\}$$
Note that $\mathring{\Phi}$ is of type $G_2$ and both $\mathring{\Phi}_s$ and  $\mathring{\Phi}_\ell$ are of type $A_2.$ 	The next lemma is a generalization of \cite[Theorem 2]{DV21} and \cite[Lemma 7.1.1]{RV19}.

	\begin{lem}\label{g2short}
		Let $\Psi$ be a symmetric closed subset of $\Phi$ with $Gr(\Psi)$ semi-closed. Then $Gr(\Psi)$ is $\mathring{\Phi}_s$.
	\end{lem}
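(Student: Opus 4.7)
The plan is to proceed in two steps: first establish $\mathring{\Phi}_s \subseteq Gr(\Psi)$, and then rule out the possibility that any long root lies in $Gr(\Psi)$.

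For the first step, since $Gr(\Psi)$ is semi-closed there exist $\alpha, \beta \in Gr(\Psi)$ with $\alpha+\beta \in \mathring{\Phi} \setminus Gr(\Psi)$, and by definition this triple is of type $(s,s,\ell)$. A short length computation in $G_2$ gives $(\alpha,\beta) = 1/2$ and hence $|\alpha-\beta|^2 = 1$, so $\alpha - \beta$ is again a short root of $\mathring{\Phi}$. Since $-\beta \in Gr(\Psi)$ by symmetry and the triple $(\alpha, -\beta, \alpha-\beta)$ is of type $(s,s,s)$ (not the excluded one), the semi-closed condition forces $\alpha - \beta \in Gr(\Psi)$. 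The six roots $\pm\alpha, \pm\beta, \pm(\alpha-\beta)$ are pairwise distinct and exhaust $\mathring{\Phi}_s$, giving $\mathring{\Phi}_s \subseteq Gr(\Psi)$.

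For the second step I would argue by contradiction. Suppose some long root $\gamma \in Gr(\Psi)$ and pick a short root $\alpha_1$ so that $\{\alpha_1, \gamma\}$ is a base of $\mathring{\Phi}$; then $\alpha_1 \in Gr(\Psi)$ by step one, and the positive long roots are $\gamma, 3\alpha_1+\gamma, 3\alpha_1+2\gamma$. Choose $a \in Z_{\alpha_1}$ and $r \in Z_\gamma \subseteq 3\bz$, and apply closure of $\Psi$ to the chain
\[(\alpha_1+a\delta) + (\gamma+r\delta),\ (\alpha_1+a\delta) + (\text{result}),\ (\alpha_1+a\delta) + (\text{result}).\]
The first two additions land on the short roots $\alpha_1+\gamma$ and $2\alpha_1+\gamma$, whose $\delta$-shifts are unrestricted, so each intermediate sum is automatically in $\Phi$ and hence in $\Psi$. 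The third lands on the long root $3\alpha_1+\gamma$ with shift $3a+r$, which lies in $3\bz$, so the sum is in $\Phi$ and therefore in $\Psi$; this shows $3\alpha_1+\gamma \in Gr(\Psi)$. One more closure, namely $(\gamma+r\delta) + ((3\alpha_1+\gamma) + (3a+r)\delta)$, produces $3\alpha_1+2\gamma$ with shift $3a+2r \in 3\bz$, hence $3\alpha_1+2\gamma \in Gr(\Psi)$. Symmetry supplies the negatives, so $Gr(\Psi) = \mathring{\Phi}$, which is closed in $\mathring{\Phi}$, contradicting the semi-closed hypothesis.

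The main obstacle is keeping the $\delta$-shifts in $3\bz$ each time a long root appears in the closure chain. The key device is to use the \emph{same} element $a \in Z_{\alpha_1}$ at every step: then the cumulative shift on $3\alpha_1+\gamma$ is $3a+r$, which is automatically in $3\bz$ since $r \in Z_\gamma \subseteq 3\bz$. Combining the two steps yields $Gr(\Psi) = \mathring{\Phi}_s$.
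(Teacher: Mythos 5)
Your proof is correct, and its second half takes a genuinely different route from the paper's. The first step (extracting a triple $(\alpha,\beta,\alpha+\beta)$ of type $(s,s,\ell)$ witnessing semi-closedness, noting that $\alpha-\beta$ is short and hence forced into $Gr(\Psi)$, and concluding $\mathring{\Phi}_s\subseteq Gr(\Psi)$) is exactly the paper's argument. For the second step the paper first observes that two distinct positive long roots would force $Gr(\Psi)=\mathring{\Phi}$, and then eliminates a single long root $2\epsilon_i-\epsilon_j-\epsilon_k$ by a case analysis on $|\pi_m(Z_{\epsilon_j-\epsilon_k})|\in\{1,2\}$, deriving contradictory congruences modulo $3$. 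You instead exploit the affine closure of $\Psi$ directly: walking up the $\alpha_1$-string through $\gamma$ with one fixed $a\in Z_{\alpha_1}$ and $r\in Z_\gamma\subseteq 3\bz$, the intermediate sums land on the short roots $\alpha_1+\gamma$ and $2\alpha_1+\gamma$ (whose $\delta$-shifts are unconstrained in $D_4^{(3)}$), and the long roots $3\alpha_1+\gamma$ and $3\alpha_1+2\gamma$ are reached with shifts $3a+r$ and $3a+2r$, both in $3\bz$; every sum therefore lies in $\Phi$ and hence in $\Psi$, giving $Gr(\Psi)=\mathring{\Phi}$, which is closed and contradicts semi-closedness. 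This is shorter and avoids the residue bookkeeping entirely. The only points worth stating explicitly are that for every long root $\gamma$ there is a base $\{\alpha_1,\gamma\}$ of $\mathring{\Phi}$ with $\alpha_1$ short (Weyl group transitivity on long roots), and that $\alpha_1\in Gr(\Psi)$ by your first step. Nothing downstream is lost by the shortcut: the subsequent proposition describing $\Psi$ explicitly re-derives the mod-$3$ structure of the sets $Z_\alpha$ on its own rather than reusing the computations from the paper's proof of this lemma.
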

	
	\begin{proof}
		Since $Gr(\Psi)$ is semi-closed, there exist short roots $\alpha,\beta\in Gr(\Psi)$ such that $\alpha+\beta\in \mathring{\Phi}_\ell$ and $\alpha+\beta\notin Gr(\Psi)$. 
		Now since $\alpha+\beta$ is long in type $G_2$, $\alpha-\beta$ must be short and hence $\alpha-\beta\in Gr(\Psi)$. Since $\alpha-\beta\not=\alpha, \beta$ and $Gr(\Psi)$ is symmetric, we have $\mathring{\Phi}_s\subseteq Gr(\Psi)$.
		Note that $Gr(\Psi)$ can not contain two long positive roots as $Gr(\Psi)\subsetneq \mathring{\Phi}$. We claim that $Gr(\Psi)$ can not contain any long root. On the contrary assume that $Gr(\Psi)$ contains a long root, say $2\epsilon_i-\epsilon_j-\ep_k\in Gr(\Psi)$. Let $S=\pi_m(Z_{\epsilon_j-\epsilon_k}).$ It is clear that $|S|\neq 3.$ Suppose that $|S|=2$. Let $\epsilon_i-\epsilon_j+p_1\delta,\epsilon_j-\ep_k+p_2\delta,\epsilon_i-\ep_k+p_3\delta\in \Psi$. Since $Gr(\Psi)=\mathring{\Phi}_s\cup \{\pm (2\epsilon_i-\epsilon_j-\ep_k)\}$ we have 
		\begin{equation}\label{rel1}
		    p_1\not\equiv p_2\ (\mathrm{mod}\ 3), \ \ p_3\not\equiv -p_2\ (\mathrm{mod}\ 3)
		\end{equation}
		Let $\ell$ be the element which is not in $S$. Then $p_1\equiv \ell(\mathrm{mod}\ 3)$ and $p_3\equiv -\ell(\mathrm{mod}\ 3)$ which gives $|\pi_m(Z_{\epsilon_i-\epsilon_k})|=1,$ but this is impossible since we have $Z_{\epsilon_i-\epsilon_j}+Z_{\epsilon_j-\ep_k}\subseteq Z_{\epsilon_i-\ep_k}.$
		
		\medskip
		Now assume that $|S|=1$ and let $S=\{\ell'\}$. In this case,  we have one more relation along with \eqref{rel1} namely 
		\begin{equation}\label{rel2}
			p_3-p_1\equiv \ell'\ (\mathrm{mod}\ 3)
		\end{equation}
		which implies that $|\pi_m(Z_{\epsilon_i-\epsilon_j})|=1$ and $|\pi_m(Z_{\epsilon_i-\ep_k})|=1.$ Suppose that $\pi_m(Z_{\epsilon_i-\epsilon_j})=\{r\}$. Then we have $\pi_m(Z_{\epsilon_i-\ep_k})=\{r+\ell'\}$. Since $\Psi$ is closed, for any $2\epsilon_i-\epsilon_j-\ep_k+3a\delta\in\Psi,$ we have $$\epsilon_i-\ep_k+(3a-p_1)\delta\,=\,(2\epsilon_i-\epsilon_j-\ep_k+3a\delta)+(\epsilon_j-\epsilon_i-p_1\delta)\in\Psi.$$ This implies $3a-p_1\equiv r+\ell'\ (\mathrm{mod}\ 3),$ which in turn implies that $\ell'\equiv -2r\equiv r\ (\mathrm{mod}\ 3).$ This contradicts \cref{rel1}. So $|S|\neq 1$ and hence the result.
	\end{proof}
	
\begin{prop}\label{prop: D43}
	Let $\Psi$ be a closed symmetric subset of $\Phi$ with $Gr(\Psi)$ semi-closed in $\mathring{\Phi}$. Then $\Psi$ is of the form 
	$$\Psi=\pm (\epsilon_i-\epsilon_j+r\bz\delta)\cup\,\pm(\epsilon_j-\ep_k+(r\bz+\ell)\delta)\cup\,\pm (\epsilon_i-\ep_k+(r\bz+\ell)\delta)$$ where $r\in 3\bz_+$ and $\{i,j,k\}$ is a permutation of $\{1,2,3\}$. In particular, $\Psi$ is a closed subroot system of $\Phi$.
\end{prop}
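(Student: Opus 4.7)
The strategy is to reduce to \Cref{prop:1}. By \Cref{g2short} we already know $Gr(\Psi)=\mathring{\Phi}_s$, an irreducible $A_2$-type closed subroot system of $\mathring{\Phi}$. The missing hypothesis of \Cref{prop:1} in this setting is the uniformity condition $|\pi_3(Z_\alpha)|=1$ for every $\alpha\in Gr(\Psi)$, and establishing this is the main technical step.

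First, since $Gr(\Psi)$ is closed in $\mathring{\Phi}$ and every triple in $Gr(\Psi)^{\times 3}$ has a short sum, condition \eqref{eq:z} follows trivially from the closedness of $\Psi$. So \Cref{existp} produces a $\bz$-linear function $p:Gr(\Psi)\to \bz$ with $p_\alpha\in Z_\alpha$. Set $\alpha_1=\epsilon_1-\epsilon_2$ and $\alpha_2=\epsilon_2-\epsilon_3$, so that $\alpha_1$, $\alpha_2$, $\alpha_1+\alpha_2$ are the positive short roots. The unordered pairs of elements of $Gr(\Psi)$ whose sum is long (up to overall sign) are $(\alpha_1,\alpha_1+\alpha_2)$, $(\alpha_2,\alpha_1+\alpha_2)$ and $(\alpha_1,-\alpha_2)$, with long sums $2\alpha_1+\alpha_2$, $\alpha_1+2\alpha_2$ and $\alpha_1-\alpha_2$ respectively. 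For each such pair $(\alpha,\beta)$, since $\alpha+\beta\in\mathring{\Phi}_\ell\setminus Gr(\Psi)$ and $\alpha+\beta+k\delta\in\Phi$ exactly when $k\in 3\bz$, the closedness of $\Psi$ forces
\begin{equation*}
(Z_\alpha+Z_\beta)\cap 3\bz=\emptyset.
\end{equation*}

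Now suppose $|\pi_3(Z_{\alpha_1})|\ge 2$. From the pair $(\alpha_1,-\alpha_2)$ the disjointness above forces $\pi_3(Z_{\alpha_2})$ to collapse to a single residue, say $\{c\}$. Applying the short-sum inclusion $\pi_3(Z_{\alpha_1})+\pi_3(Z_{\alpha_2})\subseteq \pi_3(Z_{\alpha_1+\alpha_2})$ then gives $|\pi_3(Z_{\alpha_1+\alpha_2})|\ge 2$, while the pair $(\alpha_1,\alpha_1+\alpha_2)$ forces $|\pi_3(Z_{\alpha_1+\alpha_2})|\le 1$, a contradiction. Hence $|\pi_3(Z_\alpha)|=1$ for every $\alpha\in Gr(\Psi)$, so \Cref{prop:1} applies: $A:=Z_\alpha-p_\alpha$ is independent of $\alpha\in Gr(\Psi)$ and is a subgroup of $3\bz$, say $A=r\bz$ with $r\in 3\bz_+$, and $\Psi$ is a closed subroot system of $\Phi$.

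Finally, the three long-sum restrictions all reduce to the single mod-$3$ condition $p_{\alpha_1}\not\equiv p_{\alpha_2}\pmod 3$, so exactly one of the residues $p_{\alpha_1}$, $p_{\alpha_2}$, $p_{\alpha_1+\alpha_2}\pmod 3$ vanishes. Relabelling $\{i,j,k\}$ so that the vanishing residue corresponds to $\epsilon_i-\epsilon_j$, the other two positive short roots $\epsilon_j-\epsilon_k$ and $\epsilon_i-\epsilon_k$ share a common shift $\ell\not\equiv 0\pmod 3$ by the $\bz$-linearity of $p$, and this recovers the stated form. The main obstacle is the residue analysis in the third step; once the uniformity $|\pi_3(Z_\alpha)|=1$ is in hand, the rest is bookkeeping around \Cref{prop:1}.
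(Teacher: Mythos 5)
Your overall strategy is the same as the paper's — first get a uniform coset structure $Z_\alpha=p_\alpha+r\bz$, then pin down the residues mod $3$ — but there is one genuine problem, and it sits at the very start. You assert that $Gr(\Psi)=\mathring{\Phi}_s$ is ``closed in $\mathring{\Phi}$''. It is not: $\mathring{\Phi}_s$ is the standard example of a semi-closed subset of $G_2$ (for instance $\alpha_1=\epsilon_1-\epsilon_2$ and $\alpha_1+\alpha_2=\epsilon_1-\epsilon_3$ lie in $\mathring{\Phi}_s$, while their sum $2\alpha_1+\alpha_2=2\epsilon_1-\epsilon_2-\epsilon_3$ is a long root of $\mathring{\Phi}$ outside $\mathring{\Phi}_s$), and indeed the hypothesis of the proposition you are proving is precisely that $Gr(\Psi)$ is semi-closed. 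Consequently \Cref{existp} and \Cref{prop:1}, both of which require $Gr(\Psi)$ to be closed in $\mathring{\Phi}$, are invoked outside their stated hypotheses. The paper circumvents this with a one-line device you are missing: the set $\{\alpha+r\delta:\alpha\in\mathring{\Phi}_s,\ r\in\bz\}$ is a copy of the untwisted system $A_2^{(1)}$ containing $\Psi$, and relative to that system the gradient is the full (hence closed) finite root system, so \Cref{coruntwisted} yields $\Psi=\{\alpha+(p_\alpha+r\bz)\delta\}$ directly, with no residue analysis needed to obtain the coset structure. Your route is repairable — the proofs of \Cref{existp} and \Cref{prop:1} only ever use triples $(\alpha,\beta,\alpha+\beta)\in Gr(\Psi)^{\times 3}$, and all such sums here are short — but as written the key citations do not apply and the closedness claim is false.

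The residue analysis itself is correct: the three disjointness conditions $(Z_\alpha+Z_\beta)\cap 3\bz=\emptyset$ for the pairs with long sum, and the counting argument showing each $\pi_3(Z_\alpha)$ is a singleton, are the same constraints as the paper's \eqref{d34}, just run in the opposite order (you must establish $|\pi_3(Z_\alpha)|=1$ up front because \Cref{prop:1} demands it as a hypothesis, whereas the paper obtains the cosets first and examines residues afterwards). One further caveat, which your write-up shares with the paper's: from the analysis one only gets $p_{\epsilon_i-\epsilon_j}\equiv 0\pmod 3$, hence $Z_{\epsilon_i-\epsilon_j}\subseteq 3\bz$, i.e.\ $Z_{\epsilon_i-\epsilon_j}$ is \emph{some} coset of $r\bz$ inside $3\bz$ — not that it equals $r\bz$ on the nose, as the displayed formula asserts. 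Your final sentence (``this recovers the stated form'') silently normalizes $p_{\epsilon_i-\epsilon_j}$ to $0$ modulo $r$ rather than modulo $3$, and that stronger normalization is not forced by anything that precedes it (consider $r=6$ with $Z_{\epsilon_1-\epsilon_2}=3+6\bz$, $Z_{\epsilon_2-\epsilon_3}=1+6\bz$, $Z_{\epsilon_1-\epsilon_3}=4+6\bz$, which satisfies every closure condition). Since the paper makes the same jump I flag it only as an inherited issue, but it is worth being aware that the precise displayed form requires more than the mod-$3$ information you derive.
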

	
\begin{proof}
	By \cref{g2short}, it follows that $Gr(\Psi)=\mathring{\Phi}_s$. Note that we can consider $\Psi$ as an irreducible symmetric closed subset of the untwisted root system of type $A_2^{(1)}$. By \cref{coruntwisted}, there is a $\bz$-linear function $p:Gr(\Psi)\to \bz, \alpha\mapsto p_\alpha\in Z_\alpha$ and $r\in \bz_+$ such that $$\Psi=\{\alpha+(p_\alpha+r\bz)\delta:\alpha\in Gr(\Psi)\}$$ In particular, $\Psi$ is a closed subroot system of $\Phi.$ Next we determine $r$ and $p_\alpha$ explicitly. 
	At first, we claim that $0\in \pi_m(Z_\alpha)$ for some $\alpha\in Gr(\Psi)\cap \mathring{\Phi}^+$. Suppose that $\ep_1-\ep_2+r_1\delta,\,\ep_2-\ep_3+r_2\delta,\,\ep_1-\ep_3+r_3\delta\in \Psi$. We then have the following relations:
	\begin{equation}\label{d34}
	    	r_1\not\equiv r_2\ (\mathrm{mod}\ 3),\ \ -r_3\not\equiv r_2\ (\mathrm{mod}\ 3),\ \ -r_3\not\equiv r_1\ (\mathrm{mod}\ 3).
	\end{equation}
    This implies that the integers $r_1,r_2,-r_3\ (\mathrm{mod}\ 3)$  are all distinct, so one of them must be $0\ (\mathrm{mod}\ 3)$.
    With out loss of generality we assume that $r_1\equiv 0\ (\mathrm{mod}\ 3).$ By similar argument we get
     $r_1',r_2,-r_3\ (\mathrm{mod}\ 3)$  are all distinct for any $r'\in Z_{\epsilon_1-\epsilon_2}$ and using \ref{d34} we get
     $r_1'\equiv 0\ (\mathrm{mod}\ 3)$ and $r_2\equiv r_3\ (\mathrm{mod}\ 3).$ This implies 
     $\Psi$ is of the form 
	$$\Psi=\pm (\epsilon_i-\epsilon_j+r\bz\delta)\cup\,\pm(\epsilon_j-\ep_k+(r\bz+\ell)\delta)\cup\,\pm (\epsilon_i-\ep_k+(r\bz+\ell)\delta)$$ where $r\in3\bz_+$ and $\{i,j,k\}$ is a permutation of $\{1,2,3\}$. 
	\end{proof}

\subsection{Twisted real affine root system of type \texorpdfstring{$E_6^{(2)}$}{E{6}(2)}}
In this case, the real roots
are given by $$\Phi=\big\{\pm\epsilon_i+r\delta, \pm\epsilon_i\pm\epsilon_j+2s\delta : 1\le i\neq j\le 4, r, s\in \bz\big\}\cup\{\sum_{i=1}^{4} \pm\epsilon_i/2+r\delta : r\in \bz\}.$$
The gradient root system is of type $F_4,$ and the short roots $\mathring{\Phi}_s$ form a root system of type $D_4$. 
For convenience, we call a short root of the form $\frac{1}{2}\left(\sum_{i=1}^4\pm \ep_{i}\right)$ as a \textit{special short root}.
For $I\neq\emptyset$ and a special short root $\mu=\frac{1}{2}\,(\sum_{i=1}^4\nu_{i}\ep_{i})$ define
$D_\mu^+(\Psi,I):=\{\epsilon_i\mid i\in I\}\cup\{\sum_{i=1}^4 \lambda_i\epsilon_i/2\,\mid\, \lambda_i\in \{\pm 1\}, i\in I, \lambda_j=\nu_j\text{ for } j\notin I\}$
and set $$D_\mu(\Psi,I)=D_\mu^+(\Psi,I)\cup -D_\mu^+(\Psi,I).$$
Let $\Psi$ be a symmetric closed subset of $\Phi$ such that Gr$(\Psi)$ is semi-closed.  If $Gr(\Psi)$ does not contain any special short root, then $\Psi$ can be realized as a symmetric, closed subset of the real affine root system of type $D_5^{(2)}$ and hence it is a closed subroot system (see \cref{propdn+1}).  So without loss of generality, we can assume that there is a special short root in $Gr(\Psi)$, say $\alpha=\frac{1}{2}\,(\sum_{i=1}^4\nu_{i}\ep_{i})\in Gr(\Psi).$  
Let $I=\{i\in I_4\mid \epsilon_i\in Gr(\Psi)\}.$ Since $Gr(\Psi)$ is semi-closed, there exist short roots $\beta,\gamma\in Gr(\Psi)$ such that $\beta+\gamma$ is a long root and $\beta+\gamma\notin Gr(\Psi).$ We claim that we can always choose $\beta,\gamma$ to be special short roots. For $|I|\le 1,$ it is obvious. Suppose $|I|\geq 2,$ and there are
$\mu_k\ep_{k}, \mu_r\ep_{r}\in Gr(\Psi)$ with $k\neq r$, but $\mu_k\ep_{k}+\mu_r\ep_{r}\notin Gr(\Psi)$. Then since $\alpha=\frac{1}{2}\,(\sum_{i=1}^4\nu_i\epsilon_i)\in Gr(\Psi)$ and $Gr(\Psi)$ is semi-closed, we have that $\beta=\frac{1}{2}\left(\mu_k\epsilon_k+\mu_r\epsilon_r+\sum\limits_{i\notin \{k, r\}}\nu_i\epsilon_i\right)$ and
$\gamma=\frac{1}{2}\left(\mu_k\epsilon_k+\mu_r\epsilon_r+\sum\limits_{i\notin \{k, r\}}-\nu_i\epsilon_i\right)$ are in $Gr(\Psi).$ Clearly
$\beta+\gamma=\mu_k\epsilon_k+\mu_r\epsilon_r.$

\begin{prop}\label{prop:10}
Let $\Psi$ be a symmetric, closed subset of $\Phi$ such that no irreducible component of Gr$(\Psi)$ is of type $A_1$ and $Gr(\Psi)$ is semi-closed. Set $I=\{i\in I_4:\epsilon_i\in Gr(\Psi)\}$, then  we have $|I|\equiv 0\ (\mathrm{mod}\ 2$) and exactly one of the following holds.
\begin{enumerate}
    \item If $|I|=2,$ then $\mathring{\Phi}_s\supseteq Gr(\Psi)=D_\beta(\Psi,I)$ where $\beta$ is a special short root in $Gr(\Psi).$ There exists a $\bz$-linear function $p:D_\beta(\Psi,I)\to \bz,\  p_\alpha\in Z_\alpha,\alpha\in D_\beta(\Psi,I)$ and $n\in 2\bz_{+}$ such that 
    $$\Psi=\{\alpha+(p_\alpha+n\bz)\delta\mid \alpha\in D_\beta(\Psi,I)\}.$$

    \item If $|I|=4,$ then there exists a partition $J_1,J_2$ of $I_4$ such that $|J_1|=|J_2|=2$ and a $\bz$--linear function $p:Gr(\Psi)\to \bz$ with $p_{\ep_{i}}\in 2\bz$ 
    (resp.  $p_{\ep_{i}}\in 2\bz+1$) if $i\in J_1$ (resp. $i\in J_2$) such that
	$$\Psi:=\bigcup_{\alpha\in \mathring{\Phi}_s}\{\alpha+(p_\alpha+n\bz)\delta\}\cup \bigcup_{J\in \{J_1,J_2\}}\{\pm(\ep_i\pm \ep_j)+(p_{\pm(\ep_i\pm \ep_j)}+n\bz)\delta:i\neq j\in J\}.$$

    \item If $I=\emptyset,$ there exists a $\bz$-linear function $p:Gr(\Psi)\to \bz$ and $m_0,m_1\in 2\bz_{+}$ such that $\Psi=\Psi_0\cup \Psi_1$ where $\Psi_i=\{\alpha+(p_\alpha+m_i\bz)\delta\mid \alpha\in Gr(\Psi_i)\}.$  Moreover, $Gr(\Psi)$ is a root system of type $B_2\times B_2$ and we have $p_\alpha\in 2\bz\ (resp.\ 2\bz+1)$ if $\alpha\in Gr(\Psi_0)$ (resp. $Gr(\Psi_1)$ and $\alpha$ is short).
\end{enumerate}
\end{prop}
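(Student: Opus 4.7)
The plan is a case analysis on $|I|$, building on two structural observations from the preamble. Since a special short root $\alpha=\frac{1}{2}\sum_i\nu_i\epsilon_i$ lies in $Gr(\Psi)$, semi-closedness forces $\alpha\mp\epsilon_i\in Gr(\Psi)$ for every $i\in I$ (choosing the sign to keep the sum short), and iterating yields the $2^{|I|}$ special short roots obtained from $\alpha$ by toggling the signs $\nu_i$ for $i\in I$. At the level of $\Psi$ itself, closedness in $\Phi$ gives $(\alpha\mp\epsilon_i)+(s\mp r)\delta\in\Psi$ whenever $\epsilon_i+r\delta,\alpha+s\delta\in\Psi$, so the $Z$-sets of all sign-flipped special short roots are determined by $Z_\alpha$ and the $Z_{\epsilon_i}$ for $i\in I$.

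We first show $|I|\in\{0,2,4\}$. If $|I|=3$, flipping all three signs in $I$ produces $\alpha'\in Gr(\Psi)$ with $\alpha+\alpha'=\nu_k\epsilon_k$ for the unique $k\notin I$, and semi-closedness puts $\epsilon_k\in Gr(\Psi)$, contradicting $k\notin I$. If $|I|=1$ with $I=\{i_0\}$, the forced subset $\{\pm\alpha,\pm\alpha',\pm\epsilon_{i_0}\}$ (with $\alpha'$ the sign-flip of $\nu_{i_0}$) is already closed of type $A_2$; hence $Gr(\Psi)$ must contain an additional root introducing an $(s,s,\ell)$-failure, i.e.\ a short root $\mu$ with $\mu+\alpha$ (or $\mu+\alpha'$) a long root not in $Gr(\Psi)$. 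A direct calculation shows that for any such $\mu$, one of $\mu\pm\alpha$ or $\mu\pm\alpha'$ is a short root $\pm\epsilon_j$ for some $j\ne i_0$, forcing $\epsilon_j\in Gr(\Psi)$ and contradicting $|I|=1$.

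Next pin down $Gr(\Psi)$ in each remaining case. For $|I|=2$, the only candidate long roots in $Gr(\Psi)$ are $\pm\epsilon_m\pm\epsilon_n$ with $\{m,n\}=I^c$; but any such root $\eta$ combined with a suitable sign-flip $\beta$ of $\alpha$ yields $\eta-\beta=\pm\epsilon_m$ or $\pm\epsilon_n$ (short, in $Gr(\Psi)$ by semi-closedness), contradicting $m,n\notin I$. Thus $Gr(\Psi)=D_\beta(\Psi,I)\subseteq\mathring{\Phi}_s$, of type $D_4$. For $|I|=4$, the sign-flip principle places $\mathring{\Phi}_s\subseteq Gr(\Psi)$; the long roots in $Gr(\Psi)$ are then governed by the parity of each $Z_{\epsilon_i}$ as in \Cref{semiclosedD}, partitioning $I_4=J_1\sqcup J_2$ with $\pm\epsilon_i\pm\epsilon_j\in Gr(\Psi)$ iff $i,j$ lie in the same part; the requirement that $Gr(\Psi)$ be semi-closed (hence genuinely non-closed) together with the non-$A_1$ hypothesis forces $|J_1|=|J_2|=2$. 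For $I=\emptyset$, every short root in $Gr(\Psi)$ is special; a direct analysis of pairwise sums (which are another short root, a long root, or a non-root according to the number of positions in which the two roots differ) shows $Gr(\Psi)$ decomposes as two orthogonal $B_2$ components, each consisting of two orthogonal special short roots together with their long sums.

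Finally, we verify $|\pi_2(Z_\gamma)|=1$ for a short root $\gamma$ in each case: the relations $(Z_{\epsilon_i}+Z_{\epsilon_j})\cap 2\bz=\emptyset$ for $i\in J_1,\ j\in J_2$ force $n\in 2\bz_+$ in case (2); the short-sum relations among the special short roots of $D_4$ suffice in case (1); and the presence of long roots inside each $B_2$ component forces the corresponding periods $m_i\in 2\bz_+$ in case (3). With this hypothesis in place, \Cref{prop:equal1} applies to $Gr(\Psi)$ in cases (1) and (2), and componentwise in case (3), yielding the asserted $\bz$-linear function $p$ and coset presentation of $\Psi$. The principal obstacles are the combinatorial bookkeeping needed to rule out $|I|=1$ and the $I=\emptyset$ analysis, where identifying two orthogonal $B_2$ components without any $\pm\epsilon_i$ available requires a careful enumeration of sign patterns of special short roots.
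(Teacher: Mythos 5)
Your overall architecture (case analysis on $|I|$, sign-flips of special short roots, parity of the $Z$-sets, then an appeal to \Cref{prop:1}) matches the paper's, and your parity argument ruling out $|I|=1,3$ is workable. But two steps where you replace the paper's mechanism with your own are genuinely broken. First, in the $|I|=2$ case your exclusion of long roots rests on the computation ``$\eta-\beta=\pm\epsilon_m$ or $\pm\epsilon_n$'' for $\eta=\pm\epsilon_m\pm\epsilon_n$ and $\beta$ a sign-flip of $\alpha$. This is false: the difference of a long root and a special short root either fails to be a root (a coordinate $\pm\tfrac{3}{2}$ appears) or has all four coordinates equal to $\pm\tfrac12$, hence is another special short root --- indeed one already lying in $D_\beta(\Psi,I)$ --- and never equals $\pm\epsilon_j$; no contradiction results. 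You also never address long roots $\pm\epsilon_s\pm\epsilon_t$ with $s,t\in I$. The paper handles both by exploiting the witnessing pair $\beta,\gamma$ of special short roots: from $(Z_\beta+Z_\gamma)\cap2\bz=\emptyset$ it deduces that $\beta-\gamma\notin Gr(\Psi)$ as well, that $|\pi_2(Z_\beta)|=1$, and that $I$ is a union of the blocks $\{s_1,s_2\},\{s_3,s_4\}$ determined by $\beta\pm\gamma$; then $I$-supported long roots die by the parity of $Z_{\epsilon_{s_1}}+Z_{\epsilon_{s_2}}$, while for $I^c$-supported ones $\lambda_{s_3}\epsilon_{s_3}-\lambda_{s_4}\epsilon_{s_4}$ would be an $A_1$-component and $\lambda_{s_3}\epsilon_{s_3}+\lambda_{s_4}\epsilon_{s_4}=\beta-\gamma$ would contradict $(Z_\beta+Z_\gamma)\cap2\bz=\emptyset$. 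None of this is recoverable from your stated computation.

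Second, in the $|I|=4$ case you assert that semi-closedness together with the no-$A_1$ hypothesis forces $|J_1|=|J_2|=2$. That is false as a statement about subsets of $F_4$: the set $\mathring{\Phi}_s\cup\{\pm\epsilon_i\pm\epsilon_j: i,j\in J_1\}$ with $|J_1|=3$ is symmetric, irreducible with no $A_1$ component, and semi-closed (all of its closure failures are of type $(s,s,\ell)$), so nothing at the level of $Gr(\Psi)$ alone excludes it. What excludes it is a parity consistency at the level of $\Psi$: a long root $\lambda_a\epsilon_a+\lambda_b\epsilon_b$ lies in $Gr(\Psi)$ iff $Z_{\epsilon_a}+Z_{\epsilon_b}$ meets $2\bz$, and also iff $Z_\sigma+Z_\tau$ meets $2\bz$ for special short roots $\sigma,\tau$ differing exactly on $\{a,b\}^{c}$; comparing the two forces the parities of the four sets $Z_{\epsilon_i}$ to sum to zero, i.e.\ $|J_2|$ even, whence $|J_1|=|J_2|=2$. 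In the paper this is delivered by the fact that both $\beta+\gamma$ and $\beta-\gamma$, with complementary supports $\{s_1,s_2\}$ and $\{s_3,s_4\}$, fail to lie in $Gr(\Psi)$. Your proof needs this ingredient; without it neither case (1) nor case (2) is established. (A smaller point: \Cref{prop:equal1} formally requires $Gr(\Psi)$ closed in $\mathring{\Phi}$, which fails in every case here; the paper instead verifies $|\pi_2(Z_\alpha)|=1$ by hand and routes through \Cref{prop:1}.)
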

\begin{proof}
Since $Gr(\Psi)$ is semi-closed, there exist special short roots $\beta,\gamma\in Gr(\Psi)$ such that the $\beta+\gamma$ is a long root and $\beta+\gamma\notin Gr(\Psi)$. Let $\beta=\frac{1}{2}\,(\lambda_{s_1}\ep_{s_1}+\lambda_{s_2}\ep_{s_2}+\lambda_{s_3}\ep_{s_3}+\lambda_{s_4}\ep_{s_4})$ and $\gamma=\frac{1}{2}\,(\lambda_{s_1}\ep_{s_1}+\lambda_{s_2}\ep_{s_2}-\lambda_{s_3}\ep_{s_3}-\lambda_{s_4}\ep_{s_4})$. Note that we have $(Z_\beta+Z_\gamma)\cap 2\bz=\emptyset$, so we have $\beta-\gamma\notin Gr(\Psi)$ and $|\pi_m(Z_\beta)|=1$.
First we assume that $I\neq \emptyset$. We claim that  $\ep_{s_1}\,($resp. $\ep_{s_3})\in Gr(\Psi)$ if and only if $\ep_{s_2}\,($resp. $\ep_{s_4})\in Gr(\Psi)$.
Suppose $s_1\in I$, then we have $\gamma'=\gamma-\lambda_{s_1}\ep_{s_1}\in Gr(\Psi).$ This implies that $\lambda_{s_2}\ep_{s_2}=\gamma'+\beta\in Gr(\Psi)$ and the case
$s_3\in I$ is done similarly.
This argument also shows that for any $i\in I$ and a special short root 
$\gamma\in Gr(\Psi)$, we can always change the sign of the co-efficient of $\ep_i$ in $\gamma$ and that resulted element is again in $Gr(\Psi).$

It is easy to see that we always have $D_\beta(\Psi,I)\subseteq Gr(\Psi)\cap \mathring{\Phi}_s.$  
We claim that $Gr(\Psi)\cap \mathring{\Phi}_s=D_\beta(\Psi,I).$ Suppose $|I|=4,$ then it is clear as $D_\beta(\Psi,I)=\mathring{\Phi}_s$. So assume that $|I|=2.$ 
If possible let $Gr(\Psi)\cap \mathring{\Phi}_s\backslash D_\beta(\Psi,I)\neq \emptyset$, then
there exists $\mu\in Gr(\Psi)\cap \mathring{\Phi}_s\backslash D_\beta(\Psi,I)$ and unique
$j\notin I$ such that the co-efficient of $\ep_j$ in both $\mu$ and $\beta$ is different. 
But this implies $\beta-\mu=\pm \ep_j\in  Gr(\Psi)\cap \mathring{\Phi}_s$, which is a contradiction.
Thus we get $$Gr(\Psi)\cap \mathring{\Phi}_s = D_\beta(\Psi,I).$$
Also since sum of a long root and a short root is short, it follows that $\epsilon_s\pm\epsilon_t\in Gr(\Psi)$ only if $\{s,t\}\subseteq I$ or $\{s,t\}\subseteq I_4\backslash I$. Note that every special short root $\mu\in D_\beta(\Psi,I)$ can be written as $\mu=\beta+\sum \pm \epsilon_i$ with $\epsilon_i\in Gr(\Psi)$ such that each partial sum is a root. Since all the roots appearing are short and $|\pi_m(Z_\beta)|=1,$ it follows that $|\pi_m(Z_\mu)|=1$ for each $\mu\in D_\beta(\Psi,I).$
So we have $|\pi_m(Z_\alpha)|=1$ for all $\alpha\in Gr(\Psi).$

\medskip
For $|I|=2,$ we show that no long root can occur in $Gr(\Psi)$. We shall show the case for $I=\{s_1,s_2\}$ and the remaining cases are similar. Since $\lambda_{s_1}\ep_{s_1}+\lambda_{s_2}\epsilon_{s_2}\notin Gr(\Psi)$, it follows that $(Z_{\ep_{s_1}}+Z_{\epsilon_{s_2}})\cap 2\bz=\emptyset$. Hence $\lambda_{s_1}\epsilon_{s_1}-\lambda_{s_2}\epsilon_{s_2}\notin Gr(\Psi)$ as well. Recall that $\epsilon_s\pm\epsilon_t\in Gr(\Psi)$ only if $\{s,t\}\subseteq I$ or $\{s,t\}\subseteq I_4\backslash I$. So we only can have  $\lambda_{s_3}\epsilon_{s_3}\pm \lambda_{s_4}\epsilon_{s_4}\in Gr(\Psi)$.
Since $Gr(\Psi)$ has no component of type $A_1$ and $\lambda_{s_3}\epsilon_{s_3}-\lambda_{s_4}\epsilon_{s_4}$ is orthogonal to $D_\beta(\Psi,I)$ and 
$\lambda_{s_3}\epsilon_{s_3}+\lambda_{s_4}\epsilon_{s_4}$, we have
 $$\lambda_{s_3}\epsilon_{s_3}-\lambda_{s_4}\epsilon_{s_4}\notin Gr(\Psi).$$
 Suppose  $\lambda_{s_3}\epsilon_{s_3}+ \lambda_{s_4}\epsilon_{s_4}\in Gr(\Psi)$. We have $(Z_\beta+Z_\gamma)\cap2\bz\not=\emptyset$
 since $\beta=\gamma+\lambda_{s_3}\epsilon_{s_3}+\lambda_{s_4}\epsilon_{s_4} \in Gr(\Psi)$, which in turn implies that $\beta+\gamma\in Gr(\Psi)$, a contradiction.
 Hence no long root can occur in $Gr(\Psi)$ if $|I|=2.$
  In this case we have $Gr(\Psi)=D_\beta(\Psi,I)$ which is irreducible.
When $|I|=4$ we have $D_\beta(\Psi,I)=\mathring{\Phi}_s\subseteq \Psi$. So we must have $Gr(\Psi)$ irreducible in this case.   
Now using \cref{prop:1}, we get a $\bz$-linear function $p$ and $n\in 2\mathbb{Z}_+$ such that
$\Psi=\{\alpha+(p_\alpha+n\bz)\delta:\alpha\in Gr(\Psi)\}.$ This completes the proof for $I\neq \emptyset$.
Moreover when $I=I_4$ we have the following restrictions:
	$$|\pi_m(Z_{\ep_{s_i}})|=1,\ \ \forall i \in I_4,\ \ (Z_{\ep_{s_1}}+Z_{\ep_{s_2}})\cap 2\bz=\emptyset, \ \ (Z_{\ep_{s_3}}+Z_{\ep_{s_4}})\cap 2\bz=\emptyset.$$
	Set $J_1:=\{s_i\in I_4:Z_{\ep_{s_i}}\subseteq 2\bz\}$ and $J_2:=\{s_i\in I_4:Z_{\ep_{s_i}}\subseteq 2\bz+1\}.$ From the relations above, we have $|J_1|=|J_2|=2.$ Then there exists a $\bz$--linear function $p:Gr(\Psi)\to \bz$ such that $p_{\ep_{s_i}}\in 2\bz$ (resp. in $2\bz+1$) if $s_i\in J_1$ (resp. in $J_2$) and $\Psi$ is of the form 
	$$\Psi:=\bigcup_{\alpha\in \mathring{\Phi}_s}\{\alpha+(p_\alpha+n\bz)\delta\}\cup \bigcup_{J\in \{J_1,J_2\}}\{\pm(\ep_i\pm \ep_j)+(p_{\pm(\ep_i\pm \ep_j)}+n\bz)\delta:i\neq j\in J\}.$$

\medskip
Assume that $I=\emptyset$. It is clear that the only short roots can appear in $Gr(\Psi)$ are $\alpha_1:=\beta,\alpha_2:=\gamma,\alpha_3=\frac{1}{2}\,(\lambda_{s_1}\epsilon_{s_1}-\lambda_{s_2}\epsilon_{s_2}+\lambda_{s_3}\ep_{s_3}-\lambda_{s_4}\epsilon_{s_4}),\alpha_4=\frac{1}{2}\,(-\lambda_{s_1}\epsilon_{s_1}+\lambda_{s_2}\epsilon_{s_2}+\lambda_{s_3}\epsilon_{s_3}-\lambda_{s_4}\epsilon_{s_4})$ and their negatives (see \cite[Proposition 8.1.3]{RV19}). We claim that all of these roots occur in $Gr(\Psi)$. As $(\alpha_i,\alpha_j)=0,\ 1\le i\neq j \le 4,$ $Gr(\Psi)\not=\{\pm\alpha_i\mid i=1,2\}.$ Hence there is a long root in $Gr(\Psi)$. Since $\alpha_1\pm\alpha_2\notin Gr(\Psi)$, there is a long root in Gr$(\Psi)$ of the form $\lambda_{s_i}\epsilon_{s_i}\pm \lambda_{s_j}\epsilon_{s_j}$ with $i\in\{1,2\}$ and $j\in\{3,4\}$. We shall prove the claim for one case, other cases are similar. Let $\lambda_{s_1}\epsilon_{s_1}+ \lambda_{s_3}\epsilon_{s_3}\in Gr(\Psi)$. Then $-\alpha_3=\alpha_1-(\lambda_{s_1}\epsilon_{s_1}+ \lambda_{s_3}\epsilon_{s_3})\in Gr(\Psi)$. Furthermore since $Z_{\alpha_1}$ contains elements of same parity, it follows that $(Z_{\alpha_1}+Z_{\alpha_3})\cap (2\bz+1)=\emptyset.$ Hence $\alpha_1+\alpha_3=\lambda_{s_2}\ep_{s_2}+ \lambda_{s_4}\epsilon_{s_4}\in Gr(\Psi)$. Since $Gr(\Psi)$ has no irreducible component of type $A_1$ and $\alpha_2$ is orthogonal to $\alpha_1,\alpha_3,\lambda_{s_1}\ep_{s_1}+ \lambda_{s_3}\ep_{s_3}$, it follows that $\lambda_{s_2}\ep_{s_2}- \lambda_{s_4}\ep_{s_4}\in Gr(\Psi)$. Consequently $-\alpha_4=\alpha_2-(\lambda_{s_2}\ep_{s_2}- \lambda_{s_4}\ep_{s_4})\in Gr(\Psi)$. By a similar argument we get that $\lambda_{s_1}\ep_{s_1}- \lambda_{s_3}\ep_{s_3}\in Gr(\Psi)$.
Hence $|\pi_m(Z_\gamma)|=1$ for all $\gamma\in Gr(\Psi).$ Moreover there exist a partition $J_0\sqcup J_1$ of $I_4$ with $|J_i|=2,\ i=0,1$ such that $Gr(\Psi)=\mathring{\Psi}_0\cup \mathring{\Psi}_1$ where $\mathring{\Psi}_i=\{\pm \alpha_k,\pm \alpha_\ell,\pm(\alpha_k\pm \alpha_\ell):J_i=\{k,\ell\}\}.$ We also have $\alpha_k\pm\alpha_\ell\notin Gr(\Psi)$ if $k\in J_i$ but $\ell\notin J_i.$ We can choose $\mathring{\Psi}_0$ to be the component so that $Z_\gamma\subseteq 2\bz$ for all $\gamma\in \mathring{\Psi}_0.$ In each component, all the hypothesis of \cref{prop:1} is satisfied. Hence $\Psi$ is of the required form
with exactly two of $p_{\alpha_i}$'s being even.

\end{proof}

\section{Twisted real affine root system with non-reduced gradient}
    In this section, we shall consider the twisted affine root system of type $A_{2n}^{(2)}.$ 
    The real affine roots are $$\Phi=\left\{\pm\epsilon_i+(r+1/2)\delta,\,\pm 2\epsilon_i+2r\delta,\,\pm \epsilon_i\pm\epsilon_j+r\delta\mid 1\le i\neq j\le n,r\in\bz \right\}.$$
    The gradient root system is an irreducible reduced root system of type $BC_n$. We set
    $$\mathring{\Phi}_d := \{\pm2\epsilon_i\mid 1\le i\le n\}, \ \mathring{\Phi}_s := \{\pm \epsilon_i\mid 1\le i\le n\}, \ \mathring{\Phi}_\ell := \{\pm\epsilon_i\pm\epsilon_j\mid 1\le i\neq j\le n\}$$ and 
    $$\widehat{C_n}=\left\{\pm 2\epsilon_i+2r\delta,\,\pm \epsilon_i\pm\epsilon_j+r\delta\mid 1\le i\neq j\le n,r\in\bz \right\}\subseteq \Phi.$$
    Note that $\widehat{C_n}$ is a closed subroot system of type $A_{2n-1}^{(2)}$. Let $\Psi\subseteq \widehat{C_n}$, then we have $\Psi$ is a symmetric closed in $\widehat{C_n}$ if and only if
    $\Psi$ is a symmetric closed in $\Phi.$ We will fix some notations before proceeding further.
    For any $I\subseteq I_n$, set $$B_I=\{\pm \epsilon_i, \pm \epsilon_i\pm\epsilon_j \mid i\neq j\in I\},\, \, C_I=\{\pm 2\epsilon_i, \pm \epsilon_i\pm\epsilon_j \mid i\neq j\in I\}$$ and
    $BC_I=\{\pm \epsilon_i, \pm 2\epsilon_i, \pm \epsilon_i\pm\epsilon_j \mid i\neq j\in I\}$.
    
    \subsection{}
	Let $\Psi$ be a symmetric closed subset of $\Phi$. The next lemma shows that it is enough to classify irreducible symmetric, closed subsets of $\Phi$ which contains at least one short root.
	\begin{lem}\label{irrpsi}
	    Let $\Psi$ be a symmetric closed subset of $\Phi$ and \cref{decomppsi} be the decomposition of $\Psi$ into irreducible components. Then there exists at most one $\Psi_i, 1\le i\le r$ such that $Gr(\Psi_i)\cap \mathring{\Phi}_s\neq \emptyset.$  
	    Conversely, suppose that $\Psi'$ is an irreducible symmetric, closed subset of $\Phi$ such that $Gr(\Psi')\cap \mathring{\Phi}_s\neq \emptyset$ and $\Psi''$ is a symmetric, closed subset of $\wh{C_n}$ such that $(\Psi',\Psi'')=0.$ Then $\Psi'\cup \Psi''$ is a symmetric, closed subset of $\Phi.$
	\end{lem}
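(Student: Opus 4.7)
The plan is to argue by contradiction. Suppose $\Psi_i\neq\Psi_j$ are two distinct irreducible components of $\Psi$ with short roots in their gradients. Pick $\ep_p+(r+\tfrac{1}{2})\delta\in\Psi_i$ and $\ep_q+(s+\tfrac{1}{2})\delta\in\Psi_j$; orthogonality of the components forces $p\neq q$, so $\gamma:=\ep_p+\ep_q+(r+s+1)\delta$ is a long real root of $\Phi$ and hence lies in $\Psi$ by closedness. Since the gradient $\ep_p+\ep_q$ is non-orthogonal to both $\ep_p$ and $\ep_q$, the root $\gamma$ must belong to $\Psi_i$ and to $\Psi_j$ simultaneously, a contradiction.

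\textbf{Second assertion (closure of $\Psi'\cup\Psi''$).} Symmetry is immediate. For closedness, take $\alpha,\beta\in\Psi'\cup\Psi''$ with $\alpha+\beta\in\Phi$. If $\alpha,\beta\in\Psi'$, closedness of $\Psi'$ finishes. If $\alpha,\beta\in\Psi''\subseteq\widehat{C_n}$, both $\delta$-coefficients are integers and a direct check shows that no sum of two $C_n$-gradients is a short root, so $\alpha+\beta\in\widehat{C_n}$ and closedness of $\Psi''$ inside $\widehat{C_n}$ suffices. The crux is the cross case $\alpha\in\Psi'$, $\beta\in\Psi''$, where $(\alpha,\beta)=0$ is forced. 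Writing $\bar{\alpha},\bar{\beta}$ for the gradients, orthogonality gives $|\bar{\alpha}+\bar{\beta}|^2=|\bar{\alpha}|^2+|\bar{\beta}|^2$; since squared root lengths in $BC_n$ lie in $\{1,2,4\}$, $\bar{\alpha}+\bar{\beta}$ can be a root only if either (i) both gradients are short (sum long) or (ii) both are long (sum divisible). Case (i) does not occur because $Gr(\Psi'')\subseteq C_n$ has no short roots, so the plan is to rule out (ii) as well, making the cross case vacuous.

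\textbf{Main obstacle (ruling out case (ii)).} After sign normalization we may assume $\bar{\alpha}=\ep_i+\ep_j$ and $\bar{\beta}=\ep_i-\ep_j$ for some $i\neq j$. Since $\Psi'$ is irreducible and $Gr(\Psi')\cap\mathring{\Phi}_s\neq\emptyset$, there is a chain $\bar{\alpha}=\bar{\alpha}_0,\bar{\alpha}_1,\ldots,\bar{\alpha}_m$ in $Gr(\Psi')$ with consecutive pairs non-orthogonal and $\bar{\alpha}_m$ a short root. Each $\bar{\alpha}_p$ lifts to a root in $\Psi'$, so $(\bar{\alpha}_p,\bar{\beta})=0$; writing $\bar{\alpha}_p=\sum_\ell c_\ell^{(p)}\ep_\ell$, this forces $c_i^{(p)}=c_j^{(p)}$. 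The only gradient roots in $BC_n$ with $c_i=c_j\neq 0$ are $\pm(\ep_i+\ep_j)$, and a short induction on $p$ (using the non-orthogonality of $\bar{\alpha}_p$ with $\bar{\alpha}_{p-1}$ to conclude $c_i^{(p)}\neq 0$) shows $\bar{\alpha}_p\in\{\pm(\ep_i+\ep_j)\}$ for all $p$. Hence the chain cannot reach a short root, a contradiction, completing the proof.
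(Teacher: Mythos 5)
Your proof is correct. The first assertion is argued exactly as in the paper: two distinct components with short gradients $\ep_p,\ep_q$ (necessarily $p\neq q$ by orthogonality) produce, via closedness, the long root $\ep_p+\ep_q+(r+s+1)\delta\in\Psi$, which is non-orthogonal to both components — a contradiction.

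For the converse, your route genuinely differs from the paper's. The paper works structurally: it sets $I'=\{i:\ep_i\in Gr(\Psi')\}$, uses closedness and irreducibility to pin down $B_{I'}\subseteq Gr(\Psi')\subseteq BC_{I'}$, deduces from orthogonality that $Gr(\Psi'')\subseteq C_{I_n\backslash I'}$, and concludes that a cross-sum $\alpha+\beta$ never has gradient in $\mathring{\Phi}$ because the two gradients live on disjoint index sets. You instead attack the cross case locally: orthogonality plus the Pythagorean length count in $BC_n$ (squared lengths $1,2,4$) leaves only the $(s,s,\ell)$ and $(\ell,\ell,d)$ configurations, the first dies because $Gr(\Psi'')\subseteq C_n$ has no short roots, and the second is killed by a connectivity chain in the irreducible set $Gr(\Psi')$ linking $\ep_i+\ep_j$ to a short root: every link is orthogonal to $\ep_i-\ep_j$ and non-orthogonal to its predecessor, which traps the whole chain in $\{\pm(\ep_i+\ep_j)\}$. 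Both arguments are sound; the paper's has the advantage of actually exhibiting the supports $BC_{I'}$ and $C_{I_n\backslash I'}$ (a description it leans on in the rest of Section 5), while yours is more self-contained and isolates precisely which root-length configuration is the genuine obstruction. The other two cases ($\alpha,\beta$ both in $\Psi'$ or both in $\Psi''$) you handle correctly, including the parity/$\delta$-coefficient observation that a sum of two roots of $\wh{C_n}$ can never leave $\wh{C_n}$.
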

	\begin{proof}
	If possible, assume that there exists $i,j$ with $1\le i\neq j\le r$ such that $\epsilon_k\in Gr(\Psi_i), \epsilon_\ell\in Gr(\Psi_j).$ Then we have $\epsilon_k+q\delta\in \Psi, \epsilon_\ell+s\delta\in \Psi$ for some $q,s\in \bz+1/2.$ Since $\Psi$ is closed, we have $\epsilon_k+\epsilon_\ell+(q+s)\delta\in \Psi.$ But the root $\epsilon_k+\epsilon_\ell+(q+s)\delta$ is non-orthogonal to both $\Psi_i$ and $\Psi_j$, which is impossible. For the converse part, it is easy to see that $\Psi'\cup \Psi''$ is a symmetric subset of $\Phi.$ Now we claim if $\alpha\in \Psi'$ and $\beta\in \Psi''$ then $\alpha+\beta\notin \Phi.$ This clearly implies $\Psi'\cup \Psi''$ is closed in $\Phi$ as both $\Psi'$, $\Psi''$ are closed in $\Phi.$
	Set $$I'=\{i\in I_n\mid \epsilon_i\in Gr(\Psi')\}.$$ Since $\Psi$ is closed, we have $\ep_i\pm \ep_j\in Gr(\Psi')$ for all $i\neq j\in I'$ and $\ep_i\pm \ep_j\notin Gr(\Psi')$ for all $i\in I',j\in I_n\backslash I'.$ Moreover, since $\Psi'$ is irreducible, we have $B_{I'}\subseteq Gr(\Psi')\subseteq BC_{I'}.$
	Also we have that $Gr(\Psi'')\subseteq \mathring{\Phi}_\ell\cup \mathring{\Phi}_d$ and it is orthogonal to $Gr(\Psi').$ So it follows that 
	$Gr(\Psi'')\subseteq C_{I_n\backslash I'}.$	Hence if $\alpha\in Gr(\Psi'),\beta\in Gr(\Psi''),$ then $\alpha+\beta\notin \mathring{\Phi}$. This gives our claim and completes the proof.
	
	\end{proof}

\subsection{}Now onwards, we assume that $\Psi$ is an irreducible symmetric, closed subset of $\Phi$ such that $Gr(\Psi)\cap \mathring{\Phi}_s\neq \emptyset.$ Let 
$I:=\{i\in I_n\mid \epsilon_i\in$ Gr$(\Psi)\}$, and we have $I\neq \emptyset.$ 
	Let $q_i\in Z_{\epsilon_i}$ for each $i\in I$ and set $\tilde{Z}_{\epsilon_i}:=Z_{\epsilon_i}-q_i$. Then we have $\tilde{Z}_{\epsilon_i}\subseteq \bz$ for $i\in I.$ 
	The next two lemmas are important and we give proof for the first one and skip the details of the second one as it is similar to the first one. Recall that $m=2.$
	
	\begin{lem}\label{a2nequal1}
		Let $\Psi$ be an irreducible symmetric closed subset of $\Phi$  and $I$ be defined as above. Suppose that $|\pi_m(\tilde{Z}_{\ep_i})|=1$ for some $i\in I.$ Then
		\begin{enumerate}
			\item $|\pi_m(\tilde{Z}_{\ep_j})|=1$ for all $j\in I$.
			\item $Gr(\Psi)=B_I$ and $|\pi_m(Z_{\epsilon_i\pm \epsilon_j})|=1.$
		\end{enumerate}
	\end{lem}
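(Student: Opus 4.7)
The plan is to exploit closure identities in $\Phi = A_{2n}^{(2)}$. For $i, j \in I$ with $i \neq j$, closure yields $Z_{\epsilon_i} + Z_{\epsilon_j} \subseteq Z_{\epsilon_i + \epsilon_j}$ (since $Z_{\epsilon_i}, Z_{\epsilon_j} \subseteq \bz + 1/2$, their sum lies in $\bz$, which is exactly the set of $t$ for which $\epsilon_i + \epsilon_j + t\delta \in \Phi$), and combined with symmetry, $Z_{\epsilon_i + \epsilon_j} - Z_{\epsilon_j} \subseteq Z_{\epsilon_i}$. The parity condition on $Z_{\epsilon_i}$ will be propagated across $I$ using these identities, and then irreducibility will rule out any divisible root of $Gr(\Psi)$ as well as any long root not supported in $I$.

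For part (1), the case $j = i$ is immediate, so assume $j \neq i$ and take arbitrary $r, s \in Z_{\epsilon_j}$. From $\epsilon_i + q_i\delta,\ \epsilon_j + r\delta \in \Psi$, closure yields $\epsilon_i + \epsilon_j + (r + q_i)\delta \in \Psi$; adding $-\epsilon_j - s\delta \in \Psi$ (symmetry) then gives $\epsilon_i + (r + q_i - s)\delta \in \Psi$, so $r + q_i - s \in Z_{\epsilon_i} \subseteq q_i + 2\bz$. Hence $r \equiv s \pmod{2}$, which is exactly $|\pi_m(\tilde Z_{\epsilon_j})| = 1$.

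For part (2), the inclusion $B_I \subseteq Gr(\Psi)$ is immediate: adding $\epsilon_i + q_i\delta$ and $\pm\epsilon_j + (\pm q_j)\delta$ in $\Psi$ produces $\epsilon_i \pm \epsilon_j + (q_i \pm q_j)\delta \in \Psi$ for all $i \neq j \in I$. The parity statement $|\pi_m(Z_{\epsilon_i \pm \epsilon_j})| = 1$ then follows from $Z_{\epsilon_i \pm \epsilon_j} \supseteq Z_{\epsilon_i} \pm Z_{\epsilon_j}$ combined with $Z_{\epsilon_i \pm \epsilon_j} \mp q_j \subseteq Z_{\epsilon_i}$, forcing $Z_{\epsilon_i \pm \epsilon_j} = q_i \pm q_j + 2\bz$. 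For the reverse inclusion $Gr(\Psi) \subseteq B_I$, I would first rule out divisible roots in the case $k \in I$: if $2\epsilon_k + 2t\delta \in \Psi$, adding $-\epsilon_k - q_k\delta$ yields $\epsilon_k + (2t - q_k)\delta \in \Psi$, forcing $2t - q_k \in q_k + 2\bz$, i.e.\ $q_k \in \bz$, contradicting $q_k \in \bz + 1/2$. Similarly, any long root $\epsilon_a \pm \epsilon_b \in Gr(\Psi)$ with $a \in I$, $b \notin I$ is impossible: adding with $-\epsilon_a - q_a\delta$ produces $\pm\epsilon_b + u\delta \in \Psi$ with $u \in \bz + 1/2$, contradicting $b \notin I$.

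The main obstacle is the remaining case of $2\epsilon_k \in Gr(\Psi)$ with $k \notin I$, or a long root with support entirely in $I_n \setminus I$, where no direct parity contradiction is available. The resolution is through irreducibility: by the analysis of the previous paragraph, every root of $Gr(\Psi)$ has support either entirely in $I$ or entirely in $I_n \setminus I$. If the second piece were non-empty, these two pieces would be mutually orthogonal subsets of $Gr(\Psi)$, contradicting the irreducibility of $\Psi$ (since the $I$-piece contains $\epsilon_i$ and is non-empty). Hence $Gr(\Psi)$ is supported entirely in $I$ and contains no divisible root, so $Gr(\Psi) = B_I$, completing the proof.
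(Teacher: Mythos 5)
Your proof is correct and follows essentially the same route as the paper's: closure relations among the sets $Z_\alpha$, parity arguments exploiting $q_i\in\bz+1/2$, and irreducibility of $\Psi$ to confine the support of $Gr(\Psi)$ to $I$ and exclude divisible roots. One minor overstatement: your two containments force only $Z_{\epsilon_i\pm\epsilon_j}\subseteq q_i\pm q_j+2\bz$ (it need not equal that full coset of $2\bz$), but this containment already yields $|\pi_m(Z_{\epsilon_i\pm\epsilon_j})|=1$, which is all the lemma asserts.
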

	
	\begin{proof}
	Let $j\in I$ such that $j\neq i$. Since $\Psi$ is closed, we have $\pm(\epsilon_i\pm \epsilon_j)\in Gr(\Psi)$ and $Z_{\epsilon_i\pm\epsilon_j}+Z_{\mp\epsilon_j}= Z_{\epsilon_i}.$ Hence we have $\tilde{Z}_{\epsilon_i\pm\epsilon_j}+\tilde{Z}_{\mp\epsilon_j}= \tilde{Z}_{\epsilon_i}$,  $\tilde{Z}_{\epsilon_i\pm\epsilon_j}\subseteq \tilde{Z}_{\epsilon_i}$ and $\tilde{Z}_{\mp\epsilon_j}\subseteq \tilde{Z}_{\epsilon_i}$. This implies $(1)$ and second part of $(2)$. 
	Since $\Psi$ is closed,	we also get $\epsilon_r\pm \epsilon_s\notin Gr(\Psi)$ if $r\in I$ and $s\notin I.$ As $Gr(\Psi)$ is irreducible, we must have
	$B_I\subseteq  Gr(\Psi')\subseteq BC_I$.	If possible assume that $2\epsilon_i\in Gr(\Psi)$ for some $i\in I.$ Since $\Psi$ is closed, we have $Z_{2\epsilon_i}+Z_{-\epsilon_i}\subseteq Z_{\epsilon_i}.$ 
	For $s\in \tilde{Z_{i}}$ and $2k\in Z_{2\epsilon_i},$ we have $2k-s-q_i=2k-2q_i-s+q_i\in Z_{\ep_i}.$ This implies both $s$ and $2k-2q_i-s\in \tilde{Z}_{\ep_j}$
	which is a contradiction as they have different parities as $q_i\in \bz+1/2$. Thus we must have $Gr(\Psi)=\{\pm \epsilon_i,\pm(\epsilon_i\pm \epsilon_j):i\neq j\in I\}$.
	\end{proof}
	
		\begin{lem}\label{a2nneq1}
		Let $\Psi$ be an irreducible symmetric closed subset of $\Phi$ and $I$ be defined as above. Suppose that $|\pi_m(\tilde{Z}_{\ep_i})|=2$ for some $i\in I.$ Then
		\begin{enumerate}
	    	\item $|\pi_m(\tilde{Z}_{\ep_j})|=2$ for all $j\in I$.
			\item $Gr(\Psi)=BC_I$ and $|\pi_m(Z_{\epsilon_i\pm \epsilon_j})|=2.$
		\end{enumerate}\qed
	\end{lem}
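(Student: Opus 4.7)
The proof will mirror the structure of the just-proved Lemma \ref{a2nequal1}, exploiting the equalities $Z_{\epsilon_i\pm\epsilon_j}+Z_{\mp\epsilon_j}=Z_{\epsilon_i}$ (valid for $i\neq j\in I$, and derivable from the closedness of $\Psi$ together with symmetry, as in the previous lemma). After centering by the chosen $q_i\in Z_{\epsilon_i}$, and noting that $\epsilon_i\pm\epsilon_j$ is a long root so that $Z_{\epsilon_i\pm\epsilon_j}\subseteq\bz$ and $q_i\mp q_j\in\bz$, these become the translated identities $\tilde Z_{\epsilon_i\pm\epsilon_j}+\tilde Z_{\mp\epsilon_j}=\tilde Z_{\epsilon_i}$ in $\bz$, on which the parity analysis $\pi_m$ is well-defined.

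For part (1), fix $i$ with $|\pi_m(\tilde Z_{\epsilon_i})|=2$ and any $j\in I$, $j\neq i$. The plan is to use the analogous identity with the roles of $i$ and $j$ swapped, namely $\tilde Z_{\epsilon_j-\epsilon_i}+\tilde Z_{\epsilon_i}=\tilde Z_{\epsilon_j}$. Since $\Psi$ is symmetric and closed, $\epsilon_j-\epsilon_i+(q_j-q_i)\delta=(\epsilon_j+q_j\delta)-(\epsilon_i+q_i\delta)\in\Psi$, so $0\in\tilde Z_{\epsilon_j-\epsilon_i}$; this forces $\tilde Z_{\epsilon_i}\subseteq\tilde Z_{\epsilon_j}$, and thus $|\pi_m(\tilde Z_{\epsilon_j})|\geq 2$. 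Since the image lies in $\bz/2\bz$, equality holds.

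For part (2), I would first argue, exactly as in the previous lemma, that $\Psi$ being closed forces $\pm(\epsilon_i\pm\epsilon_j)\in Gr(\Psi)$ for $i\neq j\in I$ and $\epsilon_r\pm\epsilon_s\notin Gr(\Psi)$ whenever $r\in I$, $s\notin I$; irreducibility of $\Psi$ then gives $B_I\subseteq Gr(\Psi)\subseteq BC_I$. To promote this to $Gr(\Psi)=BC_I$, the key step is to produce the divisible roots: using part (1), pick $a,a'\in\tilde Z_{\epsilon_j}$ with $a$ even and $a'$ odd; then $a+a'+2q_j\in 2\bz$ (because $2q_j\in 2\bz+1$), so $2\epsilon_j+(a+a'+2q_j)\delta$ is a genuine root in $\Phi$ and, by closedness of $\Psi$ applied to $(\epsilon_j+(a+q_j)\delta)+(\epsilon_j+(a'+q_j)\delta)$, lies in $\Psi$. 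Hence $2\epsilon_j\in Gr(\Psi)$ for every $j\in I$, and $Gr(\Psi)=BC_I$. Finally, for $|\pi_m(Z_{\epsilon_i\pm\epsilon_j})|=2$, use the inclusion $\tilde Z_{\epsilon_i}+\tilde Z_{\mp\epsilon_j}\subseteq\tilde Z_{\epsilon_i\pm\epsilon_j}$ (from $Z_{\epsilon_i}+Z_{\mp\epsilon_j}\subseteq Z_{\epsilon_i\pm\epsilon_j}$) together with $0\in\tilde Z_{\mp\epsilon_j}$; this yields $\tilde Z_{\epsilon_i}\subseteq\tilde Z_{\epsilon_i\pm\epsilon_j}$, so $|\pi_m(\tilde Z_{\epsilon_i\pm\epsilon_j})|=2$, and since $Z_{\epsilon_i\pm\epsilon_j}$ and $\tilde Z_{\epsilon_i\pm\epsilon_j}$ differ by the integer $q_i\mp q_j$, the same holds for $Z_{\epsilon_i\pm\epsilon_j}$.

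No part of this should be genuinely difficult; the only subtle point is the bookkeeping of the half-integer versus integer $\delta$-coefficients, which is what makes $2\epsilon_i$ available precisely when both parities occur in $\tilde Z_{\epsilon_i}$. This is the exact dichotomy that separates the present lemma from Lemma \ref{a2nequal1}, where the single-parity assumption instead forbade $2\epsilon_i$.
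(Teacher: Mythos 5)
Your proof is correct and is essentially the argument the paper intends (the paper omits this proof as being analogous to \cref{a2nequal1}, and your role-swapping for part (1) and the even-plus-odd construction of $2\epsilon_j$ for part (2) are exactly that analogue). The only blemish is a sign typo near the end: the inclusion you want is $Z_{\epsilon_i}+Z_{\pm\epsilon_j}\subseteq Z_{\epsilon_i\pm\epsilon_j}$ (not $Z_{\epsilon_i}+Z_{\mp\epsilon_j}$), which does not affect the conclusion.
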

\subsection{}	
    Generalizing \cite[Section 9.1]{RV19}, we now define certain symmetric, closed subsets of $\Phi$. For 
    $I\subseteq I_n$, $J\subseteq I, \tau\in 2\bz_{+}$, and a $\bz$-linear function
    $p:B_I\to \bz+1/2$ such that $p_{\epsilon_i}-\frac{1}{2}\in 2\bz\ (\text{resp.}\in 2\bz+1)$ for $i\in J\ (\text{resp.}\in I\backslash J),$ define
    $$\Psi_\tau^+(p,I,J):=\bigcup\limits_{\alpha\in B_I}(\alpha + (p_\alpha +\tau \bz)\delta),$$
    and $\Psi_\tau(p,I,J):=\Psi_\tau^+(p,I,J)\cup (-\Psi_\tau^+(p,I,J)).$ We first consider the situation that appears in \cref{a2nequal1}. In this case, we have

	\begin{prop}\label{a2n21}
	    Let $\Psi$ be an irreducible symmetric closed subset of $\Phi$ such that $Gr(\Psi)\cap \mathring{\Phi}_s\neq \emptyset.$ 
	    Let $I=\{i \in I_n : \epsilon_i\in Gr(\Psi)\}$ and assume that $|\pi_m(\tilde{Z}_{\ep_i})|=1$ for some $i\in I.$ Then
	    there exist $J\subseteq I\subseteq I_n$, $\tau\in 2\bz_{+}$, and a $\bz$-linear function $p:B_I\to \bz+1/2$ satisfying $p_{\epsilon_i}\in 2\bz+\frac{1}{2}\ (\text{resp.}\, p_{\epsilon_i}\in 2\bz+\frac{3}{2})$ for $i\in J\ (\text{resp.}\, i\in I\backslash J),$ such that $$\Psi=\Psi_\tau(p,I,J).$$
	    In particular, $\Psi$ is a closed subroot system of $\Phi.$
	\end{prop}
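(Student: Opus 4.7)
The strategy is to follow the general approach of Propositions \ref{prop:1} and \ref{prop:equal1}, adapted to the $A_{2n}^{(2)}$ setting, by identifying a $\bz$-linear map $p$ on $B_I$ and a common subgroup $\tau\bz$ for all the translated $Z'_\alpha$.

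I would begin by applying Lemma \ref{a2nequal1}, which under our hypothesis gives $Gr(\Psi)=B_I$ (no divisible root appears) and $|\pi_m(\tilde Z_\alpha)|=1$ for every $\alpha\in Gr(\Psi)$; in particular each $Z_{\epsilon_i}$ lies in a single coset of $2\bz$ inside $\bz+1/2$. I would then check hypothesis \eqref{eq:z} of Lemma \ref{existp} for $B_I$: triples $(\alpha,\beta,\alpha+\beta)\in B_I^{\times 3}$ have shape $(s,s,\ell)$, $(s,\ell,s)$ (up to permutation), or $(\ell,\ell,\ell)$. Since short shifts live in $\bz+1/2$ and long shifts in $\bz=\Lambda_\ell$, in every case $r\in Z_\alpha$ and $s\in Z_\beta$ force $r+s\in\Lambda_{\alpha+\beta}$ automatically, so $\alpha+\beta+(r+s)\delta\in\Phi$ and closedness yields $r+s\in Z_{\alpha+\beta}$. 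Thus \eqref{eq:z} is verified.

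Taking the standard base $B=\{\epsilon_{i_1}-\epsilon_{i_2},\dots,\epsilon_{i_{k-1}}-\epsilon_{i_k},\epsilon_{i_k}\}$ of $B_I$, where $I=\{i_1<\cdots<i_k\}$, I would choose $p_\alpha\in Z_\alpha$ freely for $\alpha\in B$ and extend $\bz$-linearly. Lemma \ref{existp} ensures $p_\alpha\in Z_\alpha$ for all $\alpha\in B_I$. Only the short simple root $\epsilon_{i_k}$ contributes a half-integer, so by linear extension $p_{\epsilon_i}\in\bz+1/2$ for every $i\in I$; setting $J:=\{i\in I\mid p_{\epsilon_i}\in 2\bz+1/2\}$ then automatically forces $p_{\epsilon_i}\in 2\bz+3/2$ on $I\setminus J$.

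Finally, assuming $|I|\ge 2$ so that no irreducible component of $B_I$ is of type $A_1$, the argument of Proposition \ref{prop:1} applies (the exclusion of $A_{2n}^{(2)}$ there was only to avoid divisible roots, which do not occur in $Gr(\Psi)=B_I$) and concludes $Z_\alpha-p_\alpha=\tau\bz$ is a common subgroup $\tau\bz\subseteq 2\bz$ for every $\alpha\in B_I$. Hence $\Psi=\Psi_\tau(p,I,J)$, and the reflection-invariance clause of Proposition \ref{prop:1}(2) shows $\Psi$ is a closed subroot system. The main delicate point is ensuring the parity bookkeeping so that $J$ is defined unambiguously; the boundary case $|I|=1$ (where $B_I$ is of type $A_1$) lies outside the range of Proposition \ref{prop:1} and would require a separate direct analysis.
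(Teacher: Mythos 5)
Your proof follows essentially the same route as the paper's: \cref{a2nequal1} gives $Gr(\Psi)=B_I$ and the single-coset property, condition \eqref{eq:z} is checked so that a $\bz$-linear $p$ with $p_\alpha\in Z_\alpha$ exists, and the argument of \cref{prop:1} then produces the common subgroup $\tau\bz\subseteq 2\bz$; the only cosmetic difference is that the paper defines $p$ on the $\epsilon_i$'s directly rather than on a standard base of $B_I$. Your caveat about $|I|=1$ is well taken and is in fact sharper than the paper, which silently applies the \cref{prop:1} argument even though $B_I$ is then of type $A_1$; in that case the conclusion can genuinely fail (e.g.\ $Z_{\epsilon_1}=\{1/2,5/2\}$ gives an irreducible symmetric closed subset satisfying all stated hypotheses but with $Z_{\epsilon_1}$ not a coset), so the statement should be read with an implicit assumption that $Gr(\Psi)$ is not of type $A_1$, i.e.\ $|I|\ge 2$.
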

	
	\begin{proof}
	    We have that $Gr(\Psi)=B_I$ by \cref{a2nequal1}$(2).$ It is easy to check that $Z_\alpha+Z_\beta\subseteq Z_{\alpha+\beta}$ holds for all $\alpha,\beta,\alpha+\beta\in Gr(\Psi).$ Define $J:=\{j\in I\mid Z_{\ep_j}\subseteq 2\bz+\frac{1}{2}\}.$
	    Define a $\bz$-linear function $p:Gr(\Psi)\to \bz+1/2$ by choosing $p_i\in Z_{\ep_i},\ i\in I$ and extending $\bz$-linearly. 
	    Then we have that $p_{\epsilon_i}\in 2\bz+\frac{1}{2}\ (\text{resp.}\, p_{\epsilon_i}\in 2\bz+\frac{3}{2})$ for $i\in J\ (\text{resp.}\, i\in I\backslash J).$ As before, set $Z_\alpha':=Z_\alpha-p_\alpha, \ \alpha\in Gr(\Psi).$ Note that the argument of \cref{prop:1} goes through in this case, so we get that $A=Z_\alpha'=Z_\beta' \ \forall\, \alpha,\beta\in Gr(\Psi)$ and $A$ is a subgroup of $\bz,$ say $A=\tau\bz.$ Since $|\pi_m(Z_{\epsilon_i\pm \epsilon_j})|=1,$ we must have $\tau\in 2\bz_{+}.$ Thus $\Psi=\Psi_\tau(p,I,J).$
	\end{proof}
	
	\subsection{} For $k\in \bz_{+}, I\subseteq I_n$ and a $\bz$-linear function $p:BC_I\to \bz+1/2,$ define $\Psi_k(p,I):=\Psi^+_k(p,I)\cup (-\Psi^+_k(p,I))$ where $\Psi^+_k(p,I)$ is given by
	$$\Psi^+_k(p,I):=\bigcup\limits_{\alpha\in B_I}(\alpha+(p_\alpha+k\bz)\delta)\cup \bigcup\limits_{i\in I}(2\epsilon_i+(2p_{\epsilon_i}+k(2\bz+1))\delta).$$ 	We now consider the case that appears in \cref{a2nneq1}, i.e., when some $Z_{\ep_i}$ contains elements of different parities. In this case, we have
	\begin{prop}\label{a2n22}
		Let $\Psi$ be an irreducible symmetric closed subset of $\Phi.$ Let $I=\{i \in I_n : \epsilon_i\in Gr(\Psi)\}$ and assume that $|\pi_m(\tilde{Z_i})|=2$ for some $i\in I.$ 
		Then there is an odd integer $k\in\bz_+$ and a $\bz$-linear function $p:Gr(\Psi)\to \bz+1/2$ such that $\Psi=\Psi_k(p,I).$ In particular, $\Psi$ is a closed subroot system of $\Phi$.
	\end{prop}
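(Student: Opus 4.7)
The plan is to mirror the structure of Proposition~\ref{a2n21}, adapting the argument of Proposition~\ref{prop:1} to the closed subroot system $B_I\subseteq Gr(\Psi)$, and then separately pin down $Z_{2\epsilon_i}$ from the short-root data.

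By \cref{a2nneq1}, $Gr(\Psi)=BC_I$ and $|\pi_m(\tilde Z_{\epsilon_j})|=2$ for every $j\in I$. I first concentrate on $B_I\subseteq Gr(\Psi)$, which is a closed reduced subroot system of type $B_{|I|}$. The hypothesis $Z_\alpha+Z_\beta\subseteq Z_{\alpha+\beta}$ of \cref{existp} holds automatically for every triple $(\alpha,\beta,\alpha+\beta)\in B_I^{\times 3}$: by inspecting lengths, the $\delta$-coefficients involved lie in $\bz+\tfrac12$ (short) or $\bz$ (long) and combine to land in the coset of the sum, so $\alpha+\beta\in\Phi$ is automatic and closure of $\Psi$ gives the inclusion. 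Choosing $p_{\epsilon_i}\in Z_{\epsilon_i}$ for each $i\in I$ and extending $\bz$-linearly, \cref{existp} provides a $\bz$-linear map $p:B_I\to\bz+\tfrac12$ with $p_\alpha\in Z_\alpha$ for every $\alpha\in B_I$.

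Repeating the argument of \cref{prop:1}(1) on $B_I$ (which does not use the divisibility-by-$m$ refinement, as all values of $p$ are in the same coset) yields a common subgroup $A=Z_\alpha-p_\alpha\subseteq\bz$ independent of $\alpha\in B_I$; write $A=k\bz$. Since $|\pi_m(\tilde Z_{\epsilon_i})|=2$ and parity distribution modulo $2$ is invariant under integer translation, $A$ must contain elements of both parities, forcing $k$ odd. Extending $p$ to $BC_I$ by $p_{2\epsilon_i}:=2p_{\epsilon_i}$ (which lies in $2\bz+1$) gives the candidate $\bz$-linear map.

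It remains to identify $Z_{2\epsilon_i}$ exactly. For $\supseteq$: any $a,b\in Z_{\epsilon_i}=p_{\epsilon_i}+k\bz$ with $a+b\in 2\bz$ (necessary for $2\epsilon_i+(a+b)\delta\in\Phi$) satisfies $a+b\in Z_{2\epsilon_i}$ by closure; the set of such sums is $(2p_{\epsilon_i}+k\bz)\cap 2\bz=2p_{\epsilon_i}+k(2\bz+1)$, using $2p_{\epsilon_i}$ odd and $k$ odd. For $\subseteq$: given $2\epsilon_i+t\delta\in\Psi$, the element $(2\epsilon_i+t\delta)-(\epsilon_i+p_{\epsilon_i}\delta)=\epsilon_i+(t-p_{\epsilon_i})\delta$ lies in $\Phi$ (since $t-p_{\epsilon_i}\in\bz+\tfrac12$) and thus in $\Psi$ by closure, so $t-p_{\epsilon_i}\in p_{\epsilon_i}+k\bz$, whence $t\in(2p_{\epsilon_i}+k\bz)\cap 2\bz=2p_{\epsilon_i}+k(2\bz+1)$. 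This identifies $\Psi=\Psi_k(p,I)$, and the subroot-system assertion follows by a routine case check of reflections (with the only non-formal case being reflection by $\alpha=2\epsilon_i+t\delta$ on $\beta=\epsilon_i+s\delta$, which produces $-\epsilon_i+(s-t)\delta$; our coset description together with $k$ odd makes $s-t\in -p_{\epsilon_i}+k\bz=Z_{-\epsilon_i}$). The main obstacle is the parity-of-$k$ step: the hypothesis must be leveraged precisely to force $k$ odd and to match the explicit $2p_{\epsilon_i}+k(2\bz+1)$ coset for the divisible roots.
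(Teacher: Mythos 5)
Your proposal is correct and follows essentially the same route as the paper's proof: invoke Lemma~\ref{a2nneq1} to get $Gr(\Psi)=BC_I$, run the \cref{existp}/\cref{prop:1}-style argument on $B_I$ (where sums of roots in a triple are automatically real roots of $\Phi$, so no mod-$m$ hypothesis is needed) to obtain a common subgroup $k\bz$, force $k$ odd from the two-parity hypothesis, and then determine $Z_{2\epsilon_i}=2p_{\epsilon_i}+k(2\bz+1)$ by closure in both directions. Your explicit verification of the reflection by divisible roots is a small addition the paper leaves implicit, but the substance is identical.
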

	
	\begin{proof}
	    By \cref{a2nequal1}, we have $Gr(\Psi)=BC_I.$ It is easy to check that $Z_\alpha+Z_\beta\subseteq Z_{\alpha+\beta}$ whenever $\alpha\neq\beta\in \{\pm \epsilon_i:i\in I\}.$ 
	    We define $p:B_I\to\bz+1/2$ by choosing $p_i\in Z_{\epsilon_i}, \ i\in I$ and extending $\mathbb{Z}$-linearly. As before,
	    $Z_{\alpha}'=Z_\alpha-p_\alpha$ are equal and subgroups of $\bz$ for all $\alpha\in B_I.$
	    Since $|\pi_m(Z_{\epsilon_i\pm \epsilon_j})|=2$ holds by \cref{a2nequal1}, so we must have that $k$ is an odd integer. Write $Z_{\epsilon_i}=A\cup B$ where $A=p_{\epsilon_i}+2k\mathbb{Z},\ B=p_{\epsilon_i}+k(1+2\mathbb{Z})$. Since $A+B\subseteq 2\bz$ and $\Psi$ is closed, we have $A+B=k+2p_{\epsilon_i}+2k\mathbb{Z}\subseteq Z_{2\epsilon_i}.$
	    We see that $Z_{2\epsilon_i}= k+2p_i+2k\mathbb{Z}$, since $\Psi$ is closed and we have $Z_{2\epsilon_i}+Z_{-\epsilon_i}\subseteq Z_{\epsilon_i}.$  
	    This gives us that $\Psi=\Psi_k(p,I).$
	\end{proof}
	
	\begin{rem}
	As in the reduced case, we define $Gr(\Psi)$ is semi-closed if $\alpha,\beta\in Gr(\Psi)$ such that $\alpha+\beta\in \mathring{\Phi}$ but $\alpha+\beta\notin Gr(\Psi)$ implies that $(\alpha,\beta,\alpha+\beta)$ is of type $(s,s,d)$ or $(\ell,\ell,d).$ Note that the gradient of $\Psi_\tau(p,I,J)$ is semi-closed subset of $\mathring{\Phi}$, while the gradient of $\Psi_k(p,I)$ is closed in $\mathring{\Phi}.$
	\end{rem}
	
	\begin{rem}
	Maximal closed subroot systems in this case are classified in \cite[Section 9]{RV19}. Note that
	\begin{enumerate}
	    \item $\Psi_I(A_{2n}^{(2)})=\Psi_n(p,I,J)$ with $I=I_n,J=I,p_{\epsilon_i}=\frac{1}{2}, i\in I.$
	    \item $\Psi(p,n_s)=\Psi_n(p,I).$
	    \item $\widehat{A_J}$ is a union of a symmetric closed subset of $A_{2n-1}^{(2)}$ and $\Psi_n(p,I).$
	\end{enumerate}
	\end{rem}

	 \section{Real closed Subsets and Regular subalgebras}\label{regularsect}
	 In this section, we will study the correspondence between the symmetric closed subsets of $\Phi$ and the regular subalgebras of the affine Lie algebra $\lie g$ generated by them.
	  Let us denote by $\mathcal{C}_{\text{sym}}(\lie g)$ the set of symmetric closed subsets of $\Phi$ and recall that 
	 $\lie g(\Psi)$ denotes the subalgebra of $\lie g$ generated by $\bigcup_{\alpha\in \Psi}\lie g_\alpha$, for $\Psi\in \mathcal{C}_{\text{sym}}(\lie g)$.
	 Denote by $\mathcal{R}(\lie g)=\{\lie g(\Psi) : \Psi\in \mathcal{C}_{\text{sym}}(\lie g)\}$ the set of all regular subalgebras of $\lie g$ generated by the symmetric closed subsets of $\Phi,$ and 
	 define a map $$\iota_{\lie g}:\mathcal{C}_{\text{sym}}(\lie g)\to \mathcal{R}(\lie g)$$ by $\iota_{\lie g}(\Psi)=\lie g(\Psi)$.
	 The following result is well-known in the finite setting (see, for example \cite[Proposition 4.1]{DG20}):
	 \begin{prop}
     Let $(\mathring{\lie g}, \mathring{\lie h})$ be the pair of finite-dimensional simple Lie algebra and its Cartan subalgebra corresponding to the root system $\mathring{\Phi}$. Then 
     the map $\mathring{\Psi}\mapsto \lie g(\mathring{\Psi})$ from the set of closed subsets of $\mathring{\Phi}$ to
    the set of $\mathring{\lie h}$-invariant subalgebras of $\mathring{\lie g}$ is bijective. \qed
     \end{prop}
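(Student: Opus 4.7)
The plan is to exhibit a two-sided inverse to the map $\iota_{\mathring{\lie g}}\colon\mathring{\Psi}\mapsto\lie g(\mathring{\Psi})$. Namely, I would define $\tau(\mathring{\lie g}'):=\Delta(\mathring{\lie g}')=\{\alpha\in\mathring{\Phi}:\mathring{\lie g}_\alpha\subseteq\mathring{\lie g}'\}$ for an $\mathring{\lie h}$-invariant subalgebra $\mathring{\lie g}'$. The first step is to verify that $\Delta(\mathring{\lie g}')$ is closed in $\mathring{\Phi}$: for $\alpha,\beta\in\Delta(\mathring{\lie g}')$ with $\alpha+\beta\in\mathring{\Phi}$, the bracket $[\mathring{\lie g}_\alpha,\mathring{\lie g}_\beta]$ is a nonzero subspace of the one-dimensional root space $\mathring{\lie g}_{\alpha+\beta}$, so $\mathring{\lie g}_{\alpha+\beta}\subseteq\mathring{\lie g}'$ and $\alpha+\beta\in\Delta(\mathring{\lie g}')$. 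Thus $\tau$ lands in the set of closed subsets of $\mathring{\Phi}$.

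Next, to obtain $\tau\circ\iota_{\mathring{\lie g}}=\mathrm{id}$, and hence injectivity of $\iota_{\mathring{\lie g}}$, I would prove $\Delta(\lie g(\mathring{\Psi}))=\mathring{\Psi}$ for every closed $\mathring{\Psi}\subseteq\mathring{\Phi}$. The inclusion $\supseteq$ is immediate. For the reverse, the subalgebra $\lie g(\mathring{\Psi})$ is spanned by iterated brackets $[x_{\alpha_1},[x_{\alpha_2},\ldots,[x_{\alpha_{r-1}},x_{\alpha_r}]]]$ with $\alpha_i\in\mathring{\Psi}$. A standard induction on $r$, using the unbroken string property of root strings in $\mathring{\Phi}$, shows that whenever such a bracket is nonzero each tail sum $\alpha_i+\cdots+\alpha_r$ lies in $\mathring{\Phi}$, and closedness of $\mathring{\Psi}$ then forces every such tail sum to lie in $\mathring{\Psi}$. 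In particular, the total weight $\alpha_1+\cdots+\alpha_r$ lies in $\mathring{\Psi}$ whenever it is a root, proving $\Delta(\lie g(\mathring{\Psi}))\subseteq\mathring{\Psi}$.

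For surjectivity, the intended target (matching the precise formulation in \cite[Proposition 4.1]{DG20}) consists of those $\mathring{\lie h}$-invariant subalgebras that are generated by the root spaces they contain, i.e., subalgebras $\mathring{\lie g}'$ satisfying the toral normalization
\[
\mathring{\lie g}'\cap\mathring{\lie h}=\sum_{\alpha,-\alpha\in\Delta(\mathring{\lie g}')}[\mathring{\lie g}_\alpha,\mathring{\lie g}_{-\alpha}].
\]
For any such $\mathring{\lie g}'$, the weight-space decomposition
$\mathring{\lie g}'=(\mathring{\lie g}'\cap\mathring{\lie h})\oplus\bigoplus_{\alpha\in\Delta(\mathring{\lie g}')}\mathring{\lie g}_\alpha$
shows immediately that $\mathring{\lie g}'=\lie g(\Delta(\mathring{\lie g}'))=\iota_{\mathring{\lie g}}(\tau(\mathring{\lie g}'))$, and combined with Step 2 this completes the bijection.

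The main obstacle is pinning down this target class correctly: the literal set of \emph{all} $\mathring{\lie h}$-invariant subalgebras is strictly larger (every subspace of $\mathring{\lie h}$ is itself an $\mathring{\lie h}$-invariant subalgebra, yet none beyond the zero subalgebra lies in the image of $\iota_{\mathring{\lie g}}$), so the delicate step is to make precise that the statement is really about subalgebras with the above toral normalization. Once that is fixed, the verification that $\iota_{\mathring{\lie g}}(\mathring{\Psi})$ always satisfies this normalization follows from the observation that $\lie g(\mathring{\Psi})\cap\mathring{\lie h}=\sum_{\alpha\in\mathring{\Psi}^r}\bc\,h_\alpha$, where $\mathring{\Psi}^r=\mathring{\Psi}\cap(-\mathring{\Psi})$ is the symmetric part, since the special part of $\mathring{\Psi}$ contributes no brackets returning to $\mathring{\lie h}$.
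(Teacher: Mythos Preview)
The paper does not prove this proposition; it is stated with a \qed\ and attributed to \cite[Proposition 4.1]{DG20}. Your outline is correct and is the standard argument, and you are right to flag that the target of the bijection must be the $\mathring{\lie h}$-invariant subalgebras satisfying your toral normalization, exactly as in the cited source.

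One small correction to your injectivity step: in the induction showing $\Delta(\lie g(\mathring{\Psi}))\subseteq\mathring{\Psi}$, the tail sums $\alpha_i+\cdots+\alpha_r$ need not lie in $\mathring{\Phi}$; they can equal $0$ (whenever an inner bracket lands in $\mathring{\lie h}$, e.g.\ if $\alpha_{r-1}=-\alpha_r$). This case is harmless---if the inner weight $\gamma$ is $0$ then the outer bracket has weight $\alpha_1\in\mathring{\Psi}$ already---but your invocation of the ``unbroken string property'' is a red herring. What the induction actually uses is just that $[\mathring{\lie g}_{\alpha},\mathring{\lie g}_{\gamma}]\neq 0$ forces $\alpha+\gamma\in\mathring{\Phi}\cup\{0\}$, after which closedness of $\mathring{\Psi}$ and the inductive hypothesis (that $\gamma\in\mathring{\Psi}$ whenever $\gamma\neq 0$) finish the job.
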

	 It is easy to see that unlike in the finite case, the map $\iota_{\lie g}$ is not injective for any $\lie g$, even if we restrict it to 
	 symmetric closed subsets of $\Phi.$

      \begin{example}
          Let $\lie{g}$ be any affine Lie algebra not of type $A_{2n}^{(2)}$ and $\alpha$ be a short root in $\mathring{\Phi}.$ Consider two symmetric, real closed subsets $\Psi_1,\Psi_2$ of $\Phi$ defined by $$\Psi_1:=\{\alpha+\delta,-\alpha+\delta,-\alpha-\delta,\alpha-\delta\},\ \ \Psi_2:=\{\alpha+3\delta,\alpha+\delta,-\alpha-3\delta,-\alpha-\delta\}.$$ The following relations show that $\pm 2\delta\in\Delta(\Psi_i),\ i=1,2:$  
          \begin{equation}\label{getimroots}
          \begin{aligned}
              \mathbb{C}\alpha^\vee\otimes t^{\pm 2}&=[\lie{g}_{\pm\alpha}\otimes t^{\pm 1},\lie{g}_{\mp \alpha}\otimes t^{\pm 1}]\subseteq \lie{g}(\Psi_1),\\
              \mathbb{C}\alpha^\vee\otimes t^{\pm 2}&=[\lie{g}_{\pm\alpha}\otimes t^{\pm 3},\lie{g}_{\mp \alpha}\otimes t^{\mp 1}]\subseteq \lie{g}(\Psi_2).
          \end{aligned}
          \end{equation}
          Since $[\alpha^\vee\otimes t^r,\lie{g}_{\pm\alpha}\otimes t^s]\neq 0$ for any $r,s\in \bz$, it follows that $\lie{g}_{\beta\pm 2\delta}\subseteq \lie{g}(\Psi_i)$ for all $\beta\in \Psi_i,\ i=1,2.$ Again using the commutation relations as in \cref{getimroots} we get $\mathbb{C}\alpha^\vee\otimes t^{\pm 4}\subseteq \lie{g}(\Psi_i), \ i=1,2.$ Proceeding in this way we get that $\lie{g}_{\beta\pm 2\bz\delta}\subseteq \lie{g}(\Psi_i)$ for all $\beta\in \Psi_i, \ i=1,2.$ Hence we have $$\lie{g}(\Psi_1)=\bigoplus_{r\in \bz}\lie{g}_{\pm \alpha+(2r+1)\delta}\oplus \bigoplus_{r\in\bz}\mathbb{C}\alpha^\vee\otimes t^{2r},\ \ \lie{g}(\Psi_2)=\bigoplus_{r\in \bz}\lie{g}_{\pm \alpha+(2r+3)\delta}\oplus \bigoplus_{r\in\bz}\mathbb{C}\alpha^\vee\otimes t^{2r}$$     
          So $\Psi_1\neq \Psi_2$ but $\iota_{\lie{g}}(\Psi_1)=\iota_{\lie{g}}(\Psi_2).$ 
          \end{example}
\begin{rem}
     It is possible to get similar examples using \cref{b2} when $Gr(\Psi)$ is of type $B_2$, e.g. $\Psi_1$ be given by \cref{b2}  and $\Psi_2$ be defined by switching the role of $a_1$ and $a_2.$ We will leave to the details, one can prove that $\Psi_1\neq \Psi_2$, but $\iota_{\lie{g}}(\Psi_1)=\iota_{\lie{g}}(\Psi_2).$
\end{rem}

\subsection{}	 The previous discussion motivates us to look for the best possible subclass of $\mathcal{C}_{\text{sym}}(\lie g)$ for which the map $\iota_{\lie g}$ restricted to that class
	 is injective.	 Recall a result from \cite[Corollary 11.1.5]{RV19}, the map $\Psi\mapsto \lie g(\Psi)$ is injective if we restrict to the
	  set of closed subroot systems of $\Phi$. That means the map $\iota_{\lie g}$ restricted to all closed subroot systems of $\Phi$ is injective.
	  We indeed prove that this is only the best possible subclass of symmetric closed subsets of $\Phi$ for which we can have injective:
\begin{prop}\label{injectivemaps}
Suppose $\mathcal{S}$ is a subclass of the set of symmetric closed subsets of $\Phi$ such that the restriction of $\iota_{\lie g}$ is injective, then $\mathcal{S}$ must be the set of all closed subroot systems of $\Phi$.
\end{prop}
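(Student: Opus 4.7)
The plan is to combine two observations. \textbf{Step 1:} I would first recall that the restriction of $\iota_{\lie g}$ to the collection of closed subroot systems of $\Phi$ is injective; this is the content of \cite[Corollary 11.1.5]{RV19}, and shows that the set of closed subroot systems is a valid candidate for $\mathcal{S}$.

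\textbf{Step 2:} The main content is to show that this collection is the largest such class, i.e., any strict enlargement destroys injectivity. For this I would use \Cref{smallestclosed}: given an arbitrary symmetric closed subset $\Psi\subseteq \Phi$, put $\Psi^e := \Delta(\Psi)\cap \Phi$, which by that proposition is the minimal closed subroot system of $\Phi$ containing $\Psi$. The key claim is
\[
\mathfrak{g}(\Psi) = \mathfrak{g}(\Psi^e).
\]
The inclusion $\mathfrak{g}(\Psi)\subseteq \mathfrak{g}(\Psi^e)$ is immediate from $\Psi\subseteq \Psi^e$. For the reverse inclusion, every $\alpha\in \Psi^e = \Delta(\Psi)\cap \Phi$ is by definition a real root of the subalgebra $\mathfrak{g}(\Psi)$; since real root spaces of $\mathfrak{g}$ are one-dimensional, this forces $\mathfrak{g}_\alpha\subseteq \mathfrak{g}(\Psi)$ for every such $\alpha$, whence $\mathfrak{g}(\Psi^e)\subseteq \mathfrak{g}(\Psi)$.

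\textbf{Step 3:} With this identity in hand, the proposition is essentially immediate. Suppose $\mathcal{S}$ contained some symmetric closed $\Psi$ which is not a closed subroot system; then $\Psi\subsetneq \Psi^e$ strictly, while $\iota_{\lie g}(\Psi)=\iota_{\lie g}(\Psi^e)$. Interpreting the statement as identifying the maximal subclass on which $\iota_{\lie g}$ is injective, such a $\mathcal{S}$ cannot be maximal: one may enlarge $\mathcal{S}$ by adjoining $\Psi^e$ (if absent) while preserving injectivity, or, if $\Psi^e\in \mathcal{S}$ already, injectivity fails outright. Combined with Step 1, the unique maximal such $\mathcal{S}$ is precisely the collection of closed subroot systems of $\Phi$.

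The only non-routine step is establishing $\mathfrak{g}(\Psi)=\mathfrak{g}(\Psi^e)$; once that equality is extracted from \Cref{smallestclosed} via the one-dimensionality of real root spaces, the proof reduces to set-theoretic bookkeeping and I anticipate no further obstacle.
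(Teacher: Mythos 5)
Your proof is correct and follows essentially the same route as the paper: both rest on \Cref{smallestclosed} to produce, for a symmetric closed $\Psi$ that is not a subroot system, the strictly larger closed subroot system $\Psi^e=\Delta(\Psi)\cap\Phi$ with $\lie g(\Psi)=\lie g(\Psi^e)$, combined with the injectivity on closed subroot systems from \cite[Corollary 11.1.5]{RV19}. Your Step 2, spelling out the equality $\lie g(\Psi)=\lie g(\Psi^e)$ via the one-dimensionality of real root spaces, is exactly the detail the paper leaves as ``it is clear.''
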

\begin{proof}
    Let $\Psi$ be a symmetric closed subset of $\Phi,$ which is not a subroot system of $\Phi.$ Then by \cref{smallestclosed}, we know that
    $\Psi'=\Delta(\Psi)\cap \Phi$ is the minimal closed subroot system of $\Phi$ that contains $\Psi.$ It is clear that $\Psi\neq \Psi'$, but 
    $\lie{g}(\Psi)=\lie{g}(\Psi'),$ i.e.,
    $\iota_{\lie{g}}(\Psi)=\iota_{\lie{g}}(\Psi').$ So the largest subclass $\mathcal{S}$ of symmetric closed subsets of $\Phi$ for which the restriction 
    $\iota|_{\mathcal{S}}$ is injective is the set of all closed subroot systems of $\Phi$.
\end{proof}
    \begin{cor}
        Let $\Psi$ be a symmetric closed subset of $\Phi.$ Then $\Delta(\lie g(\Psi))\cap \Phi=\Psi$ if and only if $\Psi$ is a closed subroot system of $\Phi.$
    \end{cor}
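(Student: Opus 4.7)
The plan is to deduce this corollary directly from \Cref{smallestclosed}, using the fact that by definition $\Delta(\mathfrak{g}(\Psi)) = \Delta(\Psi)$, so the statement is equivalent to saying that $\Delta(\Psi)\cap \Phi = \Psi$ if and only if $\Psi$ is a closed subroot system of $\Phi$.

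For the forward direction, I would assume $\Delta(\mathfrak g(\Psi))\cap \Phi = \Psi$. Since \Cref{smallestclosed} tells us that $\Delta(\Psi)\cap \Phi$ is a closed subroot system of $\Phi$ (in fact the minimal one containing $\Psi$), this immediately exhibits $\Psi$ as a closed subroot system of $\Phi$.

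For the reverse direction, suppose $\Psi$ is itself a closed subroot system of $\Phi$. Trivially $\Psi \subseteq \Delta(\Psi)\cap \Phi$. Conversely, the minimality asserted in \Cref{smallestclosed} says that $\Delta(\Psi)\cap \Phi$ is contained in every closed subroot system of $\Phi$ containing $\Psi$; applying this to $\Psi$ itself yields $\Delta(\Psi)\cap \Phi \subseteq \Psi$. Combining both inclusions gives the equality.

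The argument is entirely formal once \Cref{smallestclosed} is granted; there is no real obstacle. The only thing one should be careful about is the identification $\Delta(\mathfrak g(\Psi)) = \Delta(\Psi)$, which is just the definition of $\Delta(S)$ given in \Cref{regularsubalgebras}, together with noting that intersecting with $\Phi$ discards the imaginary part so that the minimality statement of \Cref{smallestclosed} applies verbatim.
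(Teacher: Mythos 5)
Your proposal is correct and follows essentially the same route the paper intends: the corollary is a direct consequence of \Cref{smallestclosed}, with the forward direction coming from the fact that $\Delta(\Psi)\cap\Phi$ is a closed subroot system, and the reverse direction from the minimality statement applied with $S=\Psi$. The paper leaves the proof implicit (it is contained in the proof of \Cref{injectivemaps}), but your argument matches it exactly.
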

    
\subsection{} Fix $\Psi\in \mathcal{C}_{\text{sym}}(\lie g)$, now we will determine the preimage $\iota^{-1}_{\lie g}(\lie g(\Psi))$ using our results, i.e., we will determine all possible 
$\Psi'\in \mathcal{C}_{\text{sym}}(\lie g)$ such that $\lie g(\Psi')=\lie g(\Psi).$ Note that $\Delta(\Psi)\cap\Phi$ is the unique real closed subroot system in $\iota_{\lie{g}}^{-1}(\lie{g}(\Psi))$, so it is enough to determine $\iota_{\lie{g}}^{-1}(\lie{g}(\Psi))$ when $\Psi$ is a real closed subroot system. If $\Psi'\in \iota_{\lie{g}}^{-1}(\lie{g}(\Psi)),$ then from the definition it is clear that $Gr(\Psi)=Gr(\Psi')$ and by \cref{smallestclosed} we have $$\iota_{\lie{g}}^{-1}(\lie{g}(\Psi))=\{\Psi'\in \mathcal{C}_{\text{sym}}(\lie g): \Delta(\Psi')\cap \Phi=\Psi\}.$$ To determine the preimage we need the following example.

\begin{example}\label{rootb2}
    Let $\Psi$ be a symmetric closed subset of $\Phi$ such that $Gr(\Psi)$ is of type $B_2.$ Then by \cref{prop:b2} there is a $\bz$-linear function $p:Gr(\Psi)\to \bz,n_\ell\in 2\bz_+$ and odd integers $1\le a_1,a_2\le n_\ell$ with $a_1+a_2\equiv 0(\mathrm{mod}\ n_\ell)$ such that $$\Psi^+=\{\alpha_i+ (p_i+A_i)\delta: i=1, 2\}\cup\{\pm(\alpha_1\pm \alpha_2)+(\pm p_1\pm p_2+n_\ell\bz)\delta:\alpha\in \mathring{\Phi}^+_\ell\}$$ where $A_i=n_\ell\bz\cup (a_i+n_\ell\bz), i=1,2.$ We shall determine $\Delta(\Psi)$ in this case and we shall show that $\Delta(\Psi)\cap \Phi$ is of the form \cref{b2subroot} with $n_s=\mathrm{gcd}(a_1,n)=\mathrm{gcd}(a_2,n).$ Since $a_1$ is odd, we have $2\nmid\ n_s.$ Since $a_1+a_2\equiv 0\ (\mathrm{mod}\ n_\ell)$ we have 
    \begin{align*}
        &Z_{\alpha_1}(\Psi)=n_\ell\bz\cup (a_1+n_\ell\bz), \ \ Z_{-\alpha_1}(\Psi)=n_\ell\bz\cup (a_2+n_\ell\bz)\  \text{ and }\\
        &Z_{\alpha_2}(\Psi)=n_\ell\bz\cup (a_2+n_\ell\bz), \ \ Z_{-\alpha_2}(\Psi)=n_\ell\bz\cup (a_1+n_\ell\bz)
    \end{align*} 
    Hence $Z_{\alpha_i}(\Psi)+Z_{-\alpha_i}(\Psi)=n_\ell\bz\cup (a_1+n_\ell\bz)\cup (a_2+n_\ell\bz)$ for $i=1,2.$ We shall show that $Z_{\pm\alpha_i}(\Delta(\Psi))\supseteq\pm p_i+\mathrm{gcd}(a_i,n_\ell)\bz.$ Since for any $r\in \bz,$ $[\alpha_1^\vee,\lie{g}_{\pm\alpha_1+r\delta}]\neq 0$ holds, for any $s\in \bz_+$ we have  $p_1+sa_1+n_\ell\bz\subseteq  Z_{\alpha_1}(\Delta(\Psi)).$  Also since $a_1+a_2\equiv 0\ (\mathrm{mod}\ n_\ell),$ $a_2+n_\ell\bz$ lies in the set $\sum_{k\in \bz}ka_1+n_\ell\bz.$ Exchanging the role of $a_1$ and $a_2$ in the argument we get $$Z_{\alpha_i}(\Delta(\Psi))\supseteq p_i+\sum_{k\in \bz}(ka_i+n_\ell\bz)=p_i+\mathrm{gcd}(a_1,n_\ell)\bz, \ \ i=1,2.$$ 
    Although $[\alpha_i^\vee,\lie{g}_{\alpha_1\pm\alpha_2+r\delta}]\neq 0, i=1,2,$ we have $p_1\pm p_2+sa_1+n_\ell\bz\subseteq  Z_{\alpha_1\pm\alpha_2}(\Delta(\Psi))$ if and only if $s$ is even. Hence it follows that $Z_{\alpha_1\pm\alpha_2}(\Delta(\Psi))\supseteq p_1\pm p_2+2\mathrm{gcd}(a_1,n_\ell)\bz.$ But $$\Psi':=\{\pm \alpha_i+\lceil\pm p_i+\mathrm{gcd}(a_1,n_\ell)\bz\rceil\delta\}\cup \{\pm(\alpha_1\pm \alpha_2)+\lceil\pm p_1\pm p_2+2\mathrm{gcd}(a_1,n_\ell)\bz\rceil\delta\}$$ is a real closed subroot system of $\Phi$ containing $\Psi$ and contained in $\Delta(\Psi)\cap \Phi.$  By \cref{smallestclosed} we have $\Psi'=\Delta(\Psi)\cap \Phi$ and hence the result.
\end{example}

\subsection{}
We first determine the preimage for $\Psi$, which is  irreducible and of the form \cref{b2subroot}.
\begin{lem}\label{preimageb2}
    Let $\Psi$ be an irreducible closed subroot system of $\Phi$ of the form \cref{b2subroot} and $n_s$ as given in \cref{b2subroot}. 
    For any given positive integer $r$, there are exactly $\varphi(2r)$ symmetric, real closed subsets  $\Psi'$ in $\iota_{\lie{g}}^{-1}(\lie{g}(\Psi))$ such that
    $n_\ell(\Psi')=2rn_s$,  where $\varphi$ is the Euler's totient function. 
    In particular, $\iota_{\lie{g}}^{-1}(\lie{g}(\Psi))$ is infinite in this case. For a fixed $n_s$ and $r,$ all $\Psi'\in \iota_{\lie{g}}^{-1}(\lie{g}(\Psi))$ with $n_\ell(\Psi')=2rn_s$ are given by \cref{b2} where $a_1$ is a cyclic generator of the group $\langle n_s \rangle$ in $\mathbb{Z}/(2rn_s)\bz.$

\end{lem}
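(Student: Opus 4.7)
The plan is to reduce the problem, via results already established, to an arithmetic count of admissible first components $a_1$ in the description \cref{b2}. First I would observe that since $\Psi$ is a closed subroot system, \cref{smallestclosed} together with \cref{injectivemaps} gives
\[
\iota_{\lie g}^{-1}(\lie g(\Psi)) = \{\Psi' \in \mathcal{C}_{\text{sym}}(\lie g) : \Delta(\Psi') \cap \Phi = \Psi\},
\]
so any $\Psi'$ in the preimage has $Gr(\Psi') = Gr(\Psi)$ of type $B_2$. Applying \cref{prop:b2} to $\Psi'$, either $\Psi'$ is itself a closed subroot system (in which case $\Psi' = \Psi$), or $\Psi'$ is of the form \cref{b2} with data $\bigl(n_\ell(\Psi'), a_1, a_2, p\bigr)$.

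Next I would invoke \cref{rootb2}, which shows that the minimal closed subroot system $\Delta(\Psi') \cap \Phi$ generated by such a $\Psi'$ has short-root period $\gcd(a_1, n_\ell(\Psi'))$ and $p$-values equal to those of $\Psi'$. Imposing $\Delta(\Psi') \cap \Phi = \Psi$ therefore yields the two constraints $\gcd(a_1, n_\ell(\Psi')) = n_s$ and $p_i(\Psi') \equiv p_i(\Psi) \pmod{n_s}$ for $i = 1, 2$.

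Fix now $n_\ell(\Psi') = 2rn_s$ and write $a_1 = bn_s$ with $0 < b < 2r$. The condition $\gcd(a_1, 2rn_s) = n_s$ translates into $\gcd(b, 2r) = 1$, which in particular forces $\gcd(b, 2) = 1$, so the oddness of $a_1$ built into \cref{b2} is automatic (and $n_s$ must itself be odd for any solutions to exist). The number of such $b$ is exactly $\varphi(2r)$, and the corresponding $a_1 = bn_s$ are precisely the cyclic generators of the order-$2r$ subgroup $\langle n_s\rangle \le \bz/(2rn_s)\bz$, yielding the explicit description claimed in the statement.

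To finish, for each admissible $a_1$ the partner $a_2 = n_\ell(\Psi') - a_1$ is forced by $a_1 + a_2 \equiv 0 \pmod{n_\ell}$, and by \cref{existp} the $\bz$-linear function $p$ is determined from its values on a base, normalised by $p_i(\Psi') = p_i(\Psi)$ as allowed by the congruence above. The step I expect to require the most care is this last uniqueness bookkeeping: one must verify that different lifts of $p_i(\Psi) + n_s\bz$ modulo $n_\ell(\Psi')$ yield the same set $\Psi'$ (perhaps after the tautological swap $a_1 \leftrightarrow a_2$), so that the $\varphi(2r)$ values of $a_1$ enumerate the preimage bijectively.
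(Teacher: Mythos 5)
Your route is the same as the paper's: restrict to $\Psi'$ of the form \eqref{b2} via \cref{prop:b2}, use \cref{rootb2} to translate $\Delta(\Psi')\cap\Phi=\Psi$ into $\gcd(a_1,n_\ell(\Psi'))=n_s$ (plus congruences on $p'$), and then count the admissible $a_1$, which are exactly the generators of $\langle n_s\rangle$ in $\bz/2rn_s\bz$, $\varphi(2r)$ in number. Up to that point you match the paper essentially word for word; the paper's entire proof is the assertion that the $\Psi'$ with $n_\ell(\Psi')=2rn_s$ are in bijection with such $a_1$, ``by \cref{rootb2}'', followed by the totient count.

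The step you defer at the end --- that the $\bz$-linear function $p'$ (and the companion $a_2$) contribute no further freedom, so that $a_1$ alone parametrizes the fibre --- is the genuine gap, and it is not a formality. A set $\Psi'$ of the form \eqref{b2} determines, and is determined by, the residues $p'_{\epsilon_i}\bmod n_\ell(\Psi')=2rn_s$ (the even coset of $Z_{\epsilon_i}(\Psi')$) together with $a_1\bmod n_\ell(\Psi')$, whereas the condition $\Delta(\Psi')\cap\Phi=\Psi$ that you extract from \cref{rootb2} only pins $p'_{\epsilon_i}$ down modulo $2n_s$. For $r>1$ this leaves, a priori, $r$ admissible residues for each $p'_{\epsilon_i}$, so taking \cref{prop:b2} at face value the fibre would have $\varphi(2r)\cdot r^2$ elements rather than $\varphi(2r)$. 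The missing rigidity must come from the closure relations for a $B_2$ gradient, which are stronger than what you (or the paper) use: \eqref{keyeq2} applies not only to the pair $(\epsilon_1,\epsilon_2)$ but also to $(\epsilon_1,-\epsilon_2)$, yielding $a_{\epsilon_1}+a_{\epsilon_2}\equiv 0$ and $a_{\epsilon_1}-a_{\epsilon_2}\equiv 0\pmod{n_\ell}$ simultaneously, and one has to reconcile these with the claimed range of $a_1$ before the bijection can be asserted. Neither your sketch nor the paper's proof carries out this bookkeeping, so the count of $\varphi(2r)$ is not yet established; this is precisely the point you should work out rather than flag.
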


\begin{proof}
If $\Psi'\in \iota_{\lie{g}}^{-1}(\lie{g}(\Psi))$, then $\Psi'$ must be of the form \cref{b2}. For a fixed $r,$  $\Psi'\in \iota_{\lie{g}}^{-1}(\lie{g}(\Psi))$ such that $n_\ell(\Psi')=2rn_s$ are in one to one correspondence with all $a_1$ such that $ 1\le a_1\le 2rn_s$ and $\mathrm{gcd}(a_1,2rn_s)=n_s$ by \cref{rootb2}. It is elementary to check that $\mathrm{gcd}(a_1,2rn_s)=n_s$ if and only if $a_1$ is a generator of the cyclic group $\langle n_s\rangle$ in $\bz/({2rn_s})\bz.$ Hence the number of such $a_1$ is equal to $\varphi(2rn_s/n_s)=\varphi(2r).$
\end{proof}
We are now ready to prove our main result of this section. 
\begin{prop}\label{preimage}
    For any irreducible real closed subroot system $\Psi$ of $\Phi,$ we have
    $$\iota_{\lie{g}}^{-1}(\lie{g}(\Psi))=\{\Psi\}$$ if either $Gr(\Psi) \text{ is not of type } A_1,B_2$ or $\text{ if } Gr(\Psi)=B_2 \text{ and }
         \Psi\text{ is as in } \cref{prop:equal1}.$ 
And $\iota_{\lie{g}}^{-1}(\lie{g}(\Psi))$ is infinite for all other cases.

\end{prop}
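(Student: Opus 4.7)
The plan is to leverage the classifications in Sections~\ref{closedsect} and~\ref{semiclosedsect} together with the injectivity of $\iota_{\lie g}$ on closed subroot systems and the explicit computation in \Cref{preimageb2}. The discussion immediately preceding the statement gives the starting reduction: if $\Psi' \in \iota_{\lie g}^{-1}(\lie g(\Psi))$, then $\Delta(\Psi') \cap \Phi = \Delta(\Psi) \cap \Phi = \Psi$, so $\Psi' \subseteq \Psi$, and (arguing via the gradient of the minimal closed subroot system generated by $\Psi'$) one obtains $Gr(\Psi') = Gr(\Psi)$. Thus $\Psi'$ ranges over symmetric closed subsets of $\Phi$ with the same gradient as $\Psi$, sitting inside $\Psi$, and whose generated algebra has real roots equal to $\Psi$.

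For the uniqueness case where $Gr(\Psi)$ has no irreducible component of type $A_1$ or $B_2$, I would feed $\Psi'$ into the appropriate classification result among \Cref{prop:equal1,prop:notb2,propdn+1,a22n-1,prop: D43,prop:10,a2n21,a2n22} according to the type of $\Phi$ and whether $Gr(\Psi)$ is closed or semi-closed in $\mathring{\Phi}$. In every one of these cases the conclusion forces $\Psi'$ itself to be a closed subroot system of $\Phi$. The corollary following \Cref{injectivemaps} then gives $\Psi' = \Delta(\Psi') \cap \Phi = \Psi$.

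The remaining uniqueness case is $Gr(\Psi) = B_2$ with $\Psi$ of the form in \Cref{prop:equal1}, i.e.\ $\Psi = \{\alpha + (p_\alpha + n\bz)\delta : \alpha \in Gr(\Psi)\}$ for a single integer $n$. Here $\Psi'$ is of one of the two forms in \Cref{prop:equal1} or \Cref{prop:b2}. The case \Cref{prop:b2} is excluded by a parity check: it requires $|\pi_m(Z_\beta(\Psi'))| > 1$ for some short root $\beta$, but $Z_\beta(\Psi') \subseteq Z_\beta(\Psi) = p_\beta + n\bz$ lies in a single residue class mod $m$ by the hypothesis on $\Psi$. So $\Psi'$ is of the form \Cref{prop:equal1}, hence a closed subroot system, and injectivity on closed subroot systems yields $\Psi' = \Psi$. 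This parity ruling-out is the main subtlety of the uniqueness half of the argument.

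For the infinite preimages I would exhibit explicit families. When $Gr(\Psi) = B_2$ and $\Psi$ is of the form \cref{b2subroot}, \Cref{preimageb2} already provides $\varphi(2r)$ distinct preimages $\Psi'$ with $n_\ell(\Psi') = 2rn_s$ for each positive integer $r$, so $\iota_{\lie g}^{-1}(\lie g(\Psi))$ is infinite. When $Gr(\Psi) = \{\pm \alpha\}$ is of type $A_1$, write $Z_\alpha(\Psi) = p_\alpha + n\bz$; for any non-empty symmetric (under $s \mapsto 2p_\alpha - s$) subset $S \subseteq p_\alpha + n\bz$ with $\langle S - S\rangle_\bz = n\bz$, the set $\Psi_S := \{\pm\alpha \pm s\delta : s \in S\}$ lies in $\iota_{\lie g}^{-1}(\lie g(\Psi))$ (closedness in $\Phi$ is automatic since no two elements of $\Psi_S$ sum to an element of $\Phi$, except possibly in the $BC$-type embedding, which one handles by taking $S$ inside a single coset). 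Taking for instance $S = \{p_\alpha + kn,\, p_\alpha + (k+1)n\}$ (suitably symmetrized) for varying $k$ produces infinitely many such $\Psi_S$, completing the proof. The main obstacle is ensuring that no further structural case is missed in the uniqueness argument; the earlier classification theorems are precisely set up to cover all possibilities, so once they are invoked correctly, the reduction is clean.
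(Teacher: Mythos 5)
Your proposal is correct and its skeleton coincides with the paper's proof: in the uniqueness cases you force any $\Psi'$ in the fibre to be a closed subroot system via the classification results and conclude by injectivity of $\iota_{\lie g}$ on closed subroot systems, and for the infinite $B_2$ case you quote \Cref{preimageb2}, exactly as the paper does. Two local deviations are worth recording. First, for $Gr(\Psi)=B_2$ with $\Psi$ as in \Cref{prop:equal1}, the paper excludes fibre elements of the non-subroot form \cref{b2} by citing \Cref{rootb2}: such a set generates a subroot system of the shape \cref{b2subroot}, whose short-root modulus is odd and therefore incompatible with the shape of \Cref{prop:equal1}, where $m$ divides the common modulus. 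You instead use $\Psi'\subseteq\Delta(\Psi')\cap\Phi=\Psi$ to get $Z_\beta(\Psi')\subseteq p_\beta+n\bz\subseteq p_\beta+m\bz$, hence $|\pi_m(Z_\beta(\Psi'))|=1$, which places $\Psi'$ directly in the \Cref{prop:equal1} case; this is valid and somewhat more self-contained, since it bypasses the computation of $\Delta(\Psi')$ in \Cref{rootb2}. Second, you supply an explicit infinite family of preimages in the $A_1$ case, which the paper's proof silently omits (it is only illustrated by the example opening Section~\ref{regularsect}); your sets $\Psi_S$ do the job once the condition $\langle S-S\rangle=n\bz$ is imposed and the $A_{2n}^{(2)}$ parity issue is handled as you indicate. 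One caveat that applies equally to the paper's statement and to your construction: when $Z_\alpha(\Psi)$ is a single coset with $n=0$ (so $\Psi$ is finite), the fibre is again just $\{\Psi\}$, so the ``infinite in all other cases'' clause implicitly assumes $n>0$. Neither of these points affects the correctness of your argument relative to the paper's.
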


\begin{proof}
Suppose that no irreducible component of $Gr(\Psi)$ is of type $A_1$ or $B_2$, then any $\Psi'\in \iota_{\lie{g}}^{-1}(\lie{g}(\Psi))$ is a closed subroot system by our previous results and in this case we have $\iota_{\lie{g}}^{-1}(\lie{g}(\Psi))=\{\Psi\}.$  
Now suppose that $Gr(\Psi)$ is of type $B_2$.  From \cref{rootb2}, a symmetric, real closed subset $\Psi'$ of the form \cref{b2} generates a closed subroot system only of the form \cref{b2subroot}. Hence if $Gr(\Psi)$ is of type $B_2$ and $\Psi$ is of the form of \cref{prop:equal1}, then any $\Psi'\in \iota_{\lie{g}}^{-1}(\lie{g}(\Psi))$ is a closed subroot system of $\Phi$ and hence in this case $\iota_{\lie{g}}^{-1}(\lie{g}(\Psi))=\{\Psi\}.$

Now let $\Psi$ be of the form \cref{b2subroot} and let $n_s$ as in \cref{b2subroot}.
If $\Psi'\in \iota_{\lie{g}}^{-1}(\lie{g}(\Psi)),$ then $\Psi'$ is of the form \cref{b2} where $n_\ell(\Psi')$ is an even multiple of $n_s.$ Remaining follows from \cref{preimageb2}. 

\begin{rem}
     One of the drastic differences between the finite and affine root system theory is that even maximal symmetric closed subsets are not necessarily closed subroot systems. 
     For example, let $n_\ell$ be any even integer $\geq 4$ and $a_1$ be any integer which is $\le n_\ell$, $a_1\neq n_\ell/2$ and $\mathrm{gcd}(a_, n_\ell)=1.$ Then define $\Psi$ with this $n_\ell$ and $a_1$ as in \cref{prop:b2}. Then $\Psi$ is a proper symmetric closed subset which is not a closed subroot system and by \cref{rootb2}, $\Delta(\Psi)\cap \Phi=\Phi.$ 
     Suppose $\Psi'\supseteq \Psi$ is a symmetric closed subset that is not a subroot system, then
     we have that $Z_{\epsilon_i}(\Psi')$ is a union of two cosets of $n_\ell\bz$. This implies that $\Psi'=\Psi.$ Hence $\Psi$ is a maximal symmetric closed subset of $\Phi$, but not a closed subroot system.
\end{rem}

\end{proof}

\section{Summary}\label{summarysect}
Let $\Phi$ be a real affine root system and $\Psi$ be a symmetric closed subset of $\Phi$. In this section we shall summarize all the results in the following table. We assume that \cref{decomppsi} and \cref{decompgrpsi} are the decomposition of $\Psi$ and $Gr(\Psi)$ into irreducible components respectively. We assume that $p:Gr(\Psi)\to \bz$ is a $\bz$-linear function such that $p_\alpha\in Z_\alpha,\ \forall \alpha\in Gr(\Psi).$ We have three different type of forms for $\Psi.$
\begin{align}
    \Psi_i&=\{\alpha+(p_\alpha+n\bz)\delta:\alpha\in Gr(\Psi_i)\}\label{typeI}\\
    \Psi&=\{\alpha+(p_\alpha+n_s\bz)\delta:\alpha\in Gr(\Psi)_s\}\cup \{\gamma+(p_\gamma+mn_s\bz)\delta:\gamma\in Gr(\Psi)_\ell\}\label{typeII}\\
    \Psi_i^+&=\{\epsilon_i+ (p_{\epsilon_i}+A_i)\delta: i=1, 2\}\cup\{\alpha+(p_\alpha+n_\ell\bz)\delta:\alpha\in \mathring{\Phi}^+_\ell\}\text{ as in } \cref{prop:b2}\label{typeIII}
\end{align}

\cmark\, and \xmark\, in remark imply that $\Psi_i$ is a closed subroot system and is not a closed subroot system respectively.
\medskip

\begin{center}

\begin{NiceTabular}{cccc}[hvlines]
$Gr(\Psi_i)$  & Case  & Form of $\Psi_i$ &  Remark\\
\Block{3-1}{Closed} & $|\pi_m(Z_\alpha)|=1,\ \forall \alpha\in Gr(\Psi)$  &  \cref{typeI} & \cmark\\
 & \Block{2-1}{\begin{tabular}{c}
      $\exists \alpha\in Gr(\Psi)_s$  \\
      such that $|\pi_m(Z_\alpha)|>1$
 \end{tabular}}&  \cref{typeII} & \cmark\\
 & &  \cref{typeIII} & \xmark \\
 \Block{6-1}{Semi-closed} & \Block{2-1}{$D_{n+1}^{(2)}$} &  \cref{typeII} & \cmark \\
 & &  \cref{typeII} & \cmark \\
 & \Block{2-1}{$A_{2n-1}^{(2)}$} & \cref{typeIII} & \xmark \\
 & &  \cref{a22n-1} & \cmark \\
 & $D_{4}^{(3)}$ & \cref{typeI} & \cmark \\
 & $E_6^{(2)}$ & \cref{typeI} & \cmark\\
\Block{2-1}{} & \Block{2-1}{$A_{2n}^{(2)}$} &  \cref{a2n21} & \cmark \\
&  & \cref{a2n22} & \cmark \\
\end{NiceTabular}
\end{center}



\bibliographystyle{plain}
\bibliography{bibfile}

\end{document}